\newtheorem{theorem}{Theorem}[section]
\newtheorem{proposition}[theorem]{Proposition}
\newcommand{\clr}{\mbox{clr}}
\title{Changing reference measure in Bayes spaces with applications to functional data analysis}
\author{R. Talsk\'a$^{a\ast}$, A.~Menafoglio$^b$, K. Hron$^a$, J.J.~Egozcue$^c$, \\ J.~Palarea-Albaladejo$^d$\\
$^{a}$Department of Mathematical Analysis and Applications of Mathematics, \\ Faculty of Science, Palack\'y University Olomouc, Olomouc, Czech Republic \\ 
$^{b}$MOX -- Department of Mathematics, Politecnico di Milano, Milano, Italy \\
$^{c}$Department of Civil and Environmental Engineering, \\ Universitat Polit\`ecnica de Catalunya, Barcelona, Spain \\
$^{d}$Biomathematics and Statistics Scotland, Edinnburgh, Scotland, UK\\
$^\ast$Corresponding author. Email: talskarenata@seznam.cz
}
\begin{document} 

\maketitle 

\begin{abstract}
Probability density functions (PDFs) can be understood as continuous compositions by the theory of Bayes spaces. The origin of a Bayes space is determined by a given reference measure. This can be easily changed through the well-known chain rule which has an impact on the geometry of the Bayes space.
This work provides a mathematical framework for setting a reference measure. It is used to develop a weighting scheme on the bounded domain of distributional data. The impact on statistical analysis is shown from the perspective of simplicial functional principal component analysis. Moreover, a novel centered log-ratio transformation is proposed to map a weighted Bayes spaces into an unweighted $L^2$ space, enabling to use most tools developed in functional data analysis (e.g. clustering, regression analysis, etc.) while accounting for the weighting strategy.
The potential of our proposal is shown through simulation and on a real case study using Italian income data.
\paragraph{Keywords:} Keywords: Bayes spaces, probability density functions, reference measure, centered log-ratio transformation, exponential family, functional principal component analysis
\end{abstract}

\section{Introduction}
Data collected through surveys occur frequently in the form of distributional data. These typically result from the discretization of probability density functions (PDFs), with a particular case being histogram data for continuous random variables.
This has motivated an increasing interest in the development of statistical methods for the analysis of distribution or density data \cite{bivand17,hron16,menafoglio16,menafoglio16b,petersen16}.
Although Functional Data Analysis (FDA) \cite{ramsay05} may potentially provide a wide range of methodological tools for this,
FDA methods are typically designed for data embedded in the $L^2$ space of square-integrable functions. As such, they cannot be applied directly to densities, since the metric of $L^2$ does not account for their peculiar properties. The need of developing novel methodological frameworks able to correctly characterize the data through the use of non-standard geometries is nowadays becoming widely recognized in FDA, not only for density data, but also for other kinds of constrained functional data (see, e.g., \cite{BongiornoGoia2018,CanaleVantini2016,RossiniCanale2018}).

A PDF is a non-negative Borel measurable function constrained to integrate to a constant, conventionally set to one. Several authors \cite{egozcue13,egozcue06,boogaart10,boogaart14} noted that PDFs have a \emph{relative} nature, in the sense that the meaningful information is embedded in the relative contribution of the probability of any (Borel) subset of the domain of the random variable generating the data to the overall probability, i.e. the measure of the whole set (so-called \emph{total}).
Changing the value of the total by multiplying the PDF by a positive real constant results in a scaled density conveying the same \emph{relative} information (which is known as the \textit{scale invariance} property). As a consequence, the actual total is in fact irrelevant for the purpose of the analysis, as e.g. assumed in Bayesian statistics \cite{gelman13}. The total used simply determines a representative of the equivalence class of proportional density functions.

The relative nature of PDFs motivates the use of the so-called log-ratio approach -- a well-established methodology for the analysis of compositional data. These are vectors describing quantitatively the parts of some whole, and are frequently represented as constrained data (e.g. proportions, percentages) carrying relative information \cite{aitchison86, pawlowsky15}. PDFs can be then interpreted as the continuous counterparts of compositions, i.e., as compositions with infinitely many parts. This has recently motivated the construction of the so-called Bayes Hilbert spaces \cite{boogaart14}, whose geometry results from the generalization of the Aitchison geometry for compositional data \cite{egozcue03} to the infinite-dimensional case. Although Bayes spaces allow to deal with both unbounded and bounded domains for the PDFs,  the latter case has been mainly considered so far in practice, and it will be the main focus in this work. 

Bayes Hilbert spaces can be defined only if a reference measure has been set. 
This measure can be arbitrarily chosen for statistical analysis. Nonetheless, it should be remarked that this choice has a direct impact on the geometry of the Bayes space, and it plays the role of origin of the space \cite{boogaart14}. So far, the Lebesgue measure has been a default choice for several real-world applications and most literature revolves around it. However, adopting a different reference measure is indeed needed to deal with PDFs on possibly unbounded supports.

As discussed in the multivariate case \cite{egozcue16}, changing the uniform reference measure can be interpreted as introducing a (non-uniform) weighting of parts in compositional analysis. This is a key point in practice, as rarely all parts of the composition have the same importance. Such weighting can be indeed relevant to consider a relative scale on the domain of a distributional variable \cite{mateu08}. For instance, when analyzing income distributions, changes in the low income stratum (e.g. an increase of 100 {\euro} for an income of 1000 {\euro} per month) are typically of greater importance than the same absolute differences for higher earners  (e.g. increase of 100 {\euro} for an income of 10,000 {\euro} per month). Accordingly, a sensible weighting strategy may be aimed at emphasizing the variability in the bottom of the domain. Weighting scheme can also be considered to account for imprecise values near the detection limit of a measurement device, or for combined effects. The choice of the reference measure should be thus driven by the purpose of the analysis, in order to down-weight (or up-weight) ``parts'' of the domain -- i.e. subdomains -- that may have lesser (or greater) importance for the analysis.

The aim of this work is to frame the previous considerations into a rigorous mathematical setting for Bayes spaces built upon general reference measures, and to develop it further in order to facilitate its practical use. We shall provide solid theoretical basis as well as clear guidelines for the use of non-uniform reference measures, and explore the consequences of changing the reference measure from uniform to non-uniform. We shall particularly illustrate and emphasize the consequences for simplicial functional principal component analysis (SFPCA), which is a dimensionality reduction method for PDFs in (unweighted) Bayes spaces proposed in \cite{hron16}.

The rest of this article is organized as follows. Section \ref{B2} introduces Bayes Hilbert spaces built upon a general reference measure and discusses in detail their properties. We then present our key contributions concerning the mathematical framework for statistical analysis in weighted Bayes spaces. Particular attention will be paid to the centered log-ratio (clr) transformation for general reference measures, and to a novel \emph{unweighting} clr which allows for the extension of several FDA methods to the weighted Bayes space setting. Amongst these, functional principal component analysis in weighted Bayes spaces is discussed in Section \ref{subsec:wSFPCA}. The consequences of changing the reference measure are further explored in Section \ref{sec:simu} through simulated data. In Section \ref{sec:casestudy}, the  developments are illustrated in a real-world application using Italian income distributions in different regions. Finally, Section \ref{sec:conclusion} concludes with some final remarks.


\section{The geometry of Bayes spaces of measures}\label{B2}
\subsection{Bayes spaces with a reference measure}\label{subsec:BayesHilbert}

The Bayes space methodology provides a Hilbert space representation for density functions. The aim of the following section is to summarize the basics of the Bayes space methodology with respect to general reference measures. Similar to the case of compositional data \cite{egozcue16}, the choice of a reference measure other than the standard uniform one induces weighting effects on the domain of densities. The reason for that might be relative scale of the domain itself as motivated by the income data application mentioned in the previous section, but for example also to highlight information about the dispersion of distributions around their mean when the mean distribution is taken as the reference measure.

Following \cite{boogaart14}, consider two $\sigma$-finite positive real-valued measures $\mu$ and $\nu$ on a measurable space $(\Omega, \mathcal{A})$, with $\Omega\subset\mathbb{R}$ and $\mathcal{A}$ being a sigma-algebra. The measures $\mu$ and $\nu$ are $\mathcal{B}$-equivalent (denoted by $\nu =_{\mathcal{B}} \mu$) if they are proportional. That is, if there exists a positive real constant $c$ such that,  for any subset $\mbox{A} \in \mathcal{A}$, $\mu(\mbox{A}) = c \cdot \nu(\mbox{A})$.
By considering a measure $\mu$ on $(\Omega, \mathcal{A})$ such that $\mu(\Omega)=1$ (i.e., a probability measure), we single out a particular representative within a $\mathcal{B}$-equivalence class of proportional measures which provides the same \emph{relative} information. Indeed, this is typically quantified through the (log-)ratios $\mu(\mbox{A}_1)/\mu(\mbox{A}_2)$, with $\mbox{A}_1$ and $\mbox{A}_2$ in $\mathcal{A}$, which are clearly invariant within the $\mathcal{B}$-equivalence class (i.e. \emph{scale invariance} is satisfied).
In the following, the representative of an equivalence class will always be considered to be a probability measure for interpretative purposes. For this reason, measures on a bounded support will be considered in the following, although the theoretical background in \cite{boogaart14} is of general validity.

When a reference measure $\mathsf{P}$ is fixed, any measure $\mu$ can be identified with its density w.r.t. the reference measure. A typical choice for $\mathsf{P}$ is the Lebesgue measure, restricted here to a bounded support. In the following, the notation $\lambda$ will be kept for this measure, although it is in fact a uniform measure that is $\mathcal{B}$-equivalent to the Lebesgue measure restricted to the bounded support. 
Given a measure $\mu$ and its density $f = d \mu / d \mathsf{P}$ with respect to $\mathsf{P}$, the probability measure of any event
$\mbox{A} \in \mathcal{A}$ is
$$\mu(\mbox{A})=\int_{\mbox{A}} f \, d \mathsf{P} = \int_{\mbox{A}} \frac{d \mu}{d \mathsf{P}} \,
d \mathsf{P}.$$

Note that the choice of the reference measure is not scale invariant, because it reflects on the scale of the entire Bayes space. For instance, the Lebesgue measure on a domain $\Omega = [a,b]$ belongs to the $\mathcal{B}$-equivalence class of the uniform measure $\mathsf{P}_0$ on $\Omega$. Clearly, $\lambda$ has density $d \lambda / d \lambda = 1$ with respect to itself, whereas it has density $d \lambda / d \mathsf{P}_0 = b-a$ w.r.t. $\mathsf{P}_0$. Thus, a rescaling of the reference measure determines a rescaling of the \emph{total}. For example, when $\lambda$ is considered, the total is set to $\lambda(\Omega)=b-a$, whereas $\mathsf{P}_0$ is associated with a total equal to $\mathsf{P}_0(\Omega)=1$.
On the other hand, once the scale of the reference measure is fixed, the corresponding densities satisfy the scale invariance property.
For instance, having set the reference measure on $\Omega = [a,b]$ to $\lambda$, the Lebesgue density $d\lambda / d \lambda$ and the uniform density $d\mathsf{P}_0 / d \lambda = \frac{1}{b-a}$ are equivalent.
As such, it will always be necessary to distinguish which kind of information (relative or absolute) is conveyed by the reference measures as this matters for the analysis.

To change the reference measure from $\lambda$ to a measure $\mathsf{P}$ with strictly positive $\lambda$-density $p = d \mathsf{P}/ d\lambda$, we can use the well-known chain rule. That is, for a generic measure $\mu$ we have that
$$
\mu(\mbox{A})= \int_{\mbox{A}}\frac{d \mu}{d \lambda} \, d \lambda = \int_{\mbox{A}} \frac{d \mu}{d \lambda} \cdot \frac{d \lambda}{d \mathsf{P}}\, d \mathsf{P} = \int_{\mbox{A}} \frac{d \mu}{d \lambda} \cdot \frac{1}{p}\, d \mathsf{P}.$$

Given a $\sigma$-finite measure $\mathsf{P}$, the Bayes space $\mathcal{B}^2(\mathsf{P})$ is a space of $\mathcal{B}$-equivalence classes of $\sigma$-finite positive measures $\mu$ with square-integrable log-density w.r.t. $\mathsf{P}$:
$$
\mathcal{B}^2(\mathsf{P}) = \left\{ \mu \in \mathcal{B}^2(\mathsf{P}) : \int \left|\ln \frac{d \mu}{d \mathsf{P}} \right|^2  d \, \mathsf{P}< +\infty\right\}.
$$

\subsection{Hilbert geometry in weighted Bayes spaces}\label{subsec:weightedBayes}
In this subsection, we introduce the Hilbert space geometry of weighted Bayes spaces. Like in the standard $L^2$ space, Bayes spaces operations analogous to addition of two functions and multiplication of a function by a scalar together with definition of the inner product are expected. They should now, however, respect the scale invariance of densities. While both operations, called perturbation and powering in the following, remain formally unchanged when changing the reference measure, the weighting affects the inner product. Here also the absolute scale of reference measure $\mathsf{P}$ matters, which corresponds to volume of the space $\Omega$. It is possible to express densities from the Bayes space in the $L^2$ space (with respect to reference measure $\mathsf{P}$) using clr transformation. This, however, still leaves open the problem of how to express the weighted densities in an \textit{unweighted} $L^2$ space. A possible solution will be presented in Section \ref{subsec:clrs}.

Using a reference measure $\mathsf{P}$, Van den Boogaart et al. \cite{boogaart10} define the operations of \textit{perturbation} and \textit{powering} as
\begin{equation} \label{eq}
(\mu \oplus_\mathsf{P} \nu)(\mbox{A}) =_{\mathcal{B}(\mathsf{P})} \int_{\mbox{A}} \frac{d \mu}{d \mathsf{P}}(t) \cdot \frac{d \nu}{d \mathsf{P}}(t) \, d \mathsf{P}(t), \quad \mbox{A} \in \mathcal{A}
\end{equation}
and
\begin{equation} \label{eq1}
(\alpha \odot_\mathsf{P} \mu)(\mbox{A}) =_{\mathcal{B}(\mathsf{P})} \int_{\mbox{A}} \left(\frac{d \mu}{d \mathsf{P}}(t)\right)^{\alpha} \, d \mathsf{P}(t), \quad \mbox{A} \in \mathcal{A},
\end{equation}
where $\mu$ and $\nu$ are measures in $\mathcal{B}^2(\mathsf{P})$ and  $\alpha$ is a real number. Moreover, all the measures $\mu,\,\nu,\,\lambda$ and $\mathsf{P}$ are assumed to be Radon-Nikodym derivatives of each other. These operations define a vector space structure on
$\mathcal{B}^2(\mathsf{P})$ \cite{boogaart10}.
The operations (\ref{eq}) and (\ref{eq1}) can be equivalently expressed using the densities with respect to $\mathsf{P}$. Denoting them by $f_\mathsf{P} = \frac{d \mu}{d \mathsf{P}}$ and $g_\mathsf{P} = \frac{d \nu}{d \mathsf{P}}$ respectively, we have that
\begin{equation*}
(f_\mathsf{P} \oplus_\mathsf{P} g_\mathsf{P})(t) =_{\mathcal{B}(\mathsf{P})} f_{\mathsf{P}}(t) \cdot g_{\mathsf{P}}(t) \quad \text{and} \quad (\alpha \odot_\mathsf{P} f_\mathsf{P})(t) = f_\mathsf{P}(t)^{\alpha}.
\end{equation*}

Both operations can be represented with respect to the Lebesgue measure $\lambda$, which is preferable from a practical perspective. Indeed, it holds that
\begin{eqnarray*} \label{eq-b}
(\mu \oplus_\mathsf{P} \nu)(\mbox{A}) &=_{\mathcal{B}(\lambda)}& \int_{\mbox{A}} \left(\frac{d \mu}{d \lambda} \cdot \frac{d \nu}{d \lambda}\right) \cdot \left(\frac{d \mathsf{P}}{d \lambda}\right)^{-1} \, d \lambda,
\quad \mbox{A} \in \mathcal{A}
\end{eqnarray*}
and
\begin{eqnarray*} 
(\alpha \odot_\mathsf{P} \mu)(\mbox{A}) &=_{\mathcal{B}(\lambda)}& \int_{\mbox{A}} \left(\frac{d \mu}{d \lambda}\right)^{\alpha} \cdot \left(\frac{d \mathsf{P}}{d \lambda}\right)^{-\alpha+1} \, d \lambda,
\quad \mbox{A} \in \mathcal{A};
\end{eqnarray*}
and in terms of densities
\begin{eqnarray*}
(f_\mathsf{P} \oplus_\mathsf{P} g_\mathsf{P})(t) &=_{\mathcal{B}(\lambda)} & \left[ f(t) \oplus g(t) \right]\ominus p(t) =_{\mathcal{B}(\lambda)} \frac{f(t) \cdot g(t)}{p(t)}, \quad t \in \Omega
\end{eqnarray*}
and
\begin{eqnarray*}
(\alpha \odot_\mathsf{P} f_\mathsf{P})(t)&=_{\mathcal{B}(\lambda)}& \left[ \alpha \odot f(t)\right] \ominus p(t) =_{\mathcal{B}(\lambda)} \frac{f(t)^\alpha}{  p(t)^{\alpha-1} }, \quad t \in \Omega,
\end{eqnarray*}
where $\oplus, \ominus, \odot$ are operations in $\mathcal{B}^2(\lambda)$.

It is easy to verify that scale invariance of the reference density $p$ holds for these operations.
On the other hand, the scale of $p$ is crucial for the definition of the inner product \cite{boogaart14}:
\begin{equation} \label{eq: inner}
\begin{split}
\left\langle f_\mathsf{P}, g_\mathsf{P}\right\rangle_{\mathcal{B}(\mathsf{P})} & = \frac{1}{2\mathsf{P}(\Omega)} \int_\Omega \int_\Omega \ln \frac{f_\mathsf{P}(t)}{f_\mathsf{P}(u)} \ln \frac{g_\mathsf{P}(t)}{g_\mathsf{P}(u)} \, d \mathsf{P}(t) d \mathsf{P}(u) \\
                                                    & = \frac{1}{2\mathsf{P}(\Omega)} \int_\Omega \int_\Omega \ln \frac{f(t)}{f(u)} \ln \frac{g(t)}{g(u)} \cdot p(t) \cdot p(u) \, d \lambda(t) d \lambda(u),
\end{split}
\end{equation}
which endows the Bayes space $\mathcal{B}^2(\mathsf{P})$ with a separable Hilbert space structure. As a consequence, the distance between two densities $f_{\mathsf{P}},g_{\mathsf{P}}\in\mathcal{B}^2(\mathsf{P})$ is obtained as
\begin{equation} \label{eq: dist}
d_{\mathcal{B}(\mathsf{P})}(f_\mathsf{P}, g_\mathsf{P}) = \sqrt{\frac{1}{2\mathsf{P}(\Omega)} \int_\Omega \int_\Omega 
\left(\ln \frac{f_\mathsf{P}(t)}{f_\mathsf{P}(u)} - \ln \frac{g_\mathsf{P}(t)}{g_\mathsf{P}(u)}\right)^2 \, d \mathsf{P}(t) d \mathsf{P}(u).}                                                    
\end{equation}

Note that the above definitions of inner product (\ref{eq: inner}) and distance (\ref{eq: dist}) generalize the approach presented in \cite{egozcue16} in order to keep dominance under change of reference measure. Specifically, let $p_0$ be a uniform density of a measure $\mathsf{P}_0$, not necessarily normalized to $\mathsf{P}_0(\Omega)=1$, supported in an interval (or compact set) $I$ in $\mathbb{R}$ (or $\mathbb{R}^m$), such that
$$\mathsf{P}_0(I)=\int_I p_0(t)\,dt< +\infty.$$
Let $p,q$ be densities in $\mathcal{B}^2(\mathsf{P}_0)$ corresponding to measures $\mathsf{P},\mathsf{Q}$ such that $\mathsf{P}$ dominates $\mathsf{Q}$, 
$\mathsf{P}\succ \mathsf{Q}$, that is 
$$\mathsf{P}_0(t\in I:p(t)\geq q(t))=\mathsf{P}_0(I).$$
Then, for $f_{\mathsf{P}_0},g_{\mathsf{P}_0}\in\mathcal{B}^2(\mathsf{P}_0)$,
\begin{equation}
\label{eq: domin}
d_{\mathcal{B}(\mathsf{P})}(f_\mathsf{P}, g_\mathsf{P})\geq d_{\mathcal{B}(\mathsf{Q})}(f_\mathsf{Q}, g_\mathsf{Q}),
\end{equation}
where $f_\mathsf{P}=f_{\mathsf{P}_0}\cdot d\mathsf{P}_0/d\mathsf{P}=_{\mathcal{B}(\mathsf{P})}f_{\mathsf{P}_0}\ominus p_{\mathsf{P}_0}$ and
$g_\mathsf{P}=g_{\mathsf{P}_0}\cdot d\mathsf{P}_0/d\mathsf{P}=_{\mathcal{B}(\mathsf{P})}g_{\mathsf{P}_0}\ominus p_{\mathsf{P}_0}$ \cite{gelman13}. The property \eqref{eq: domin} represents indeed the continuous counterpart to the subcompositional dominance in compositions \cite{pawlowsky15}. That is, if the volume of the space $\mathsf{P}(I)$ is greater than or equal to $\mathsf{Q}(I)$ uniformly for any subinterval of $I$, then distances in
$\mathcal{B}(\mathsf{P})$ dominate distances in $\mathcal{B}(\mathsf{Q})$. A limiting case is comparing distances in
a subinterval $I_1\subseteq I$ with those in $I$, which corresponds to subcompositions in the simplex \cite{egozcue16}.

Let's denote by $L_0^2(\mathsf{P})$ the closed subspace of $L^2(\mathsf{P})$ whose elements $f_0$ have zero integral $\int_{\Omega}f_0\, d \mathsf{P}=0$. Since the Bayes space $\mathcal{B}^2(\mathsf{P})$ is Hilbertian, we can define an isometric isomorphism (i.e. a bijective map preserving distances) between $\mathcal{B}^2(\mathsf{P})$ and $L_0^2(\mathsf{P})$. Such a map is provided by the \textit{centred log-ratio (clr)} transformation with respect to $\mathsf{P}$, which is denoted by $\clr_{\mathsf{P}}$ and is defined for $f_\mathsf{P} \in \mathcal{B}^2(\mathsf{P})$ by \cite{boogaart14} as
\begin{equation} \label{clrp}
f^c_\mathsf{P}(t) = \clr_\mathsf{P}(f_\mathsf{P})(t) = \ln f_\mathsf{P}(t) - \frac{1}{\mathsf{P}(\Omega)} \int_{\Omega} \ln f_\mathsf{P}(u)  \, d \mathsf{P}(u), \quad t \in \Omega.
\end{equation} 
Its inverse mapping to $\mathcal{B}^2(\mathsf{P})$ is obtained by using the exponential transformation, $\exp[f^c_\mathsf{P}](t) = \exp[\clr_\mathsf{P}(f_\mathsf{P})](t)$, as shown in \cite{boogaart14}. The clr representation allows to use the ordinary geometry of $L^2(\mathsf{P})$ to conduct operations of perturbation, powering, and inner product for the elements of $\mathcal{B}^2(\mathsf{P})$, while accounting for the specific features captured by the Bayes space. Indeed,
\begin{eqnarray*} \label{}
\clr_\mathsf{P}(f_\mathsf{P} \oplus_\mathsf{P} g_\mathsf{P}) = \clr_\mathsf{P}(f_\mathsf{P}) + \clr_\mathsf{P}(g_\mathsf{P}), \quad \clr_\mathsf{P}(\alpha \odot f_\mathsf{P}) = \alpha \cdot \clr_\mathsf{P}(f_\mathsf{P})(t)
\end{eqnarray*}
and
\begin{eqnarray} \label{innprod-clrP}
\left\langle f_\mathsf{P}, g_\mathsf{P} \right\rangle_{\mathcal{B}^2(\mathsf{P})} = \left\langle \clr_\mathsf{P}(f_\mathsf{P}), \clr_\mathsf{P}(f_\mathsf{P})\right\rangle_{L^2(\mathsf{P})}.
\end{eqnarray}
In order to prove the relationship (\ref{innprod-clrP}) (recall: $f_\mathsf{P}, g_\mathsf{P}$ are elements of $\mathcal{B}^2(\mathsf{P})$), we develop the right-hand side of (\ref{eq: inner}),
\begin{equation*} 
\begin{split}
\left\langle f_\mathsf{P}, g_\mathsf{P}\right\rangle_{\mathcal{B}(\mathsf{P})} = & 
\frac{1}{\mathsf{2P}(\Omega)} \int_{\Omega} \int_{\Omega} \left[\ln f_\mathsf{P}(t) -\ln f_\mathsf{P}(u) \right] \cdot \left[\ln g_\mathsf{P}(t) -\ln g_\mathsf{P}(u) \right] \, d\mathsf{P} (t) \, d\mathsf{P} (u) 
\\
= &\frac{1}{2\mathsf{P}(\Omega)}
\left[
           2\int_{\Omega} \int_{\Omega} \ln f_\mathsf{P}(t) \cdot \ln g_\mathsf{P}(t) \, d\mathsf{P} (t) \, d\mathsf{P} (u) \right. \\
& \left. - 2\int_{\Omega} \int_{\Omega} \ln f_\mathsf{P}(t) \cdot \ln g_\mathsf{P}(u) \, d\mathsf{P} (t) \, d\mathsf{P} (u) 
\right] \\
= &
\int_{\Omega} \ln f_\mathsf{P}(t) \cdot \ln g_\mathsf{P}(t) \, d\mathsf{P} (t) - 
\frac{1}{\mathsf{P}(\Omega)} \int_{\Omega} \ln f_\mathsf{P}(t) \, d\mathsf{P} (t) \cdot \int_{\Omega} \ln g_\mathsf{P}(u) \, d \mathsf{P} (u),
\end{split}
\end{equation*}
which truly equals the right-hand side of (\ref{innprod-clrP}),
\begin{equation*} 
\begin{split}
& \left\langle \clr_\mathsf{P}(f_\mathsf{P}), \clr_\mathsf{P}(f_\mathsf{P})\right\rangle_{L^2(\mathsf{P})} = 
\int_{\Omega} \clr_\mathsf{P}(f_\mathsf{P})(t) \cdot \clr_\mathsf{P}(g_\mathsf{P})(t) \, d\mathsf{P} (t) \\
 = &
\int_{\Omega} \left[
\ln f_\mathsf{P}(t) - \frac{1}{\mathsf{P}(\Omega)} \int_{\Omega} \ln f_\mathsf{P}(u) d\mathsf{P} (u)
\right] 
\cdot
\left[ 
\ln g_\mathsf{P}(t) - \frac{1}{\mathsf{P}(\Omega)} \int_{\Omega} \ln g_\mathsf{P}(u) d\mathsf{P} (u)
\right] d\mathsf{P} (t) \\
 = &
\int_{\Omega} \ln f_\mathsf{P}(t) \cdot \ln g_\mathsf{P}(t) \, d\mathsf{P} (t)  d \mathsf{P} (t) - \frac{2}{\mathsf{P}(\Omega)}  \int_{\Omega} \ln f_\mathsf{P}(t) \, d\mathsf{P} (t) \cdot \int_{\Omega} \ln g_\mathsf{P}(u) \, d \mathsf{P} (u) \\
 & +  \frac{1}{\mathsf{P}^2(\Omega)} \int_{\Omega}
\left[
\int_{\Omega} \ln f_\mathsf{P}(u) \, d \mathsf{P} (u) \int_{\Omega} \ln g_\mathsf{P}(u) \, d \mathsf{P} (u)
\right] d \mathsf{P} (t) \\
 = &
\int_{\Omega} \ln f_\mathsf{P}(t) \cdot \ln g_\mathsf{P}(t) \, d\mathsf{P} (t) - 
\frac{1}{\mathsf{P}(\Omega)} \int_{\Omega} \ln f_\mathsf{P}(t) \, d\mathsf{P} (t) \cdot \int_{\Omega} \ln g_\mathsf{P}(u) \, d \mathsf{P} (u),
\end{split}
\end{equation*}
where $t,u \in \Omega$.
As noted by \cite{hron16,Mach,talska18}, the zero integral constraint of clr transformed $\mathsf{P}$-densities ($\int_\Omega \clr_\mathsf{P}(f_\mathsf{P}) d \mathsf{P} = 0$) should be taken into account for any subsequent statistical analysis.

Unlike the case of \cite[Sect. 4]{boogaart14}, in this work the reference measure $\mbox{P}$ in $L_0^2(\mathsf{P})$ is not necessarily a probability measure, as its normalization may lead to incoherent results when restricting the analysis to a subdomain of the original domain $\Omega$ (as was shown in the discrete case \cite{egozcue16}). Moreover, it should be noted that, when considering density functions defined on an unbounded domain, the transformation $\clr_\mathsf{P}$ is no longer valid with an (unrestricted) Lebesgue reference measure, as $\clr_\mathsf{P}$ involves to the total mass $\mathsf{P}(\Omega)$ in the denominator. In those cases, a reference measure which is finite on $\Omega$ has to be chosen. Nonetheless, once a proper reference measure is set, the statistical analysis can be performed equivalently to the case with finite support.

\subsection{Unweighting Bayes spaces}\label{subsec:clrs}
Most methods developed for FDA rely on the assumption that functional data are embedded in the \emph{unweighted} $L^2$ space.
However, the clr transformation (\ref{clrp}) maps measures in (a subspace of) a weighted space $L^2$ space, i.e. $L^2_0(\mathsf{P})$. A transformation mapping $\mathsf{P}$-densities from $\mathcal{B}^2(\mathsf{P})$ to an unweighted counterpart of $L_0(\mathsf{P})$ would have the advantage of allowing the use of most FDA methods, while accounting for the weighted Bayes structure of the data. In this subsection, we derive an unweighting scheme to represent the weighted Bayes space geometry in an unweighted Bayes space, as well as in an unweighted $L^2$ space.

We thus aim to define three mappings. Firstly, we define $\omega$ from $\mathcal{B}^2(\lambda)$ to $\mathcal{B}^2(\mathsf{P})$ as a \emph{weighting} map associating an unweighted $\lambda$-density to a weighted $\mathsf{P}$-density. Inversely, $\omega^{-1}$ is interpreted as an \emph{unweighting} map. Similarly, we define $\omega_2$ and its inverse $\omega_2^{-1}$ which play the same role between the unweighted and weighted $L^2$ spaces, i.e. $L^2(\lambda)$ and $L^2(\mathsf{P})$ respectively. Finally, we define $\clr_{u}$ (\emph{unweighting clr}) such that, for $f_\mathsf{P} \in \mathcal{B}^2(\mathsf{P})$,
\begin{eqnarray*} \label{clr2-prop}
\clr_{u}(f_\mathsf{P} \oplus_\mathsf{P} g_\mathsf{P}) = \clr_{u}(f_\mathsf{P}) + \clr_{u}(g_\mathsf{P}), \quad \clr_{u}(\alpha \odot f_\mathsf{P}) = \alpha \cdot \clr_{u}(f_\mathsf{P})(t)
\end{eqnarray*}
and
\begin{eqnarray}
\left\langle f_\mathsf{P}, g_\mathsf{P} \right\rangle_{\mathcal{B}^2(\mathsf{P})} = \left\langle \clr_\mathsf{u}(f_\mathsf{P}),\clr_\mathsf{u}(f_\mathsf{P})\right\rangle_{L^2(\lambda)}.
\end{eqnarray}
To support this construction and study the properties of these maps, we shall use an auxiliary measure $\sqrt{\mathsf{P}}$ defined as
$$
\sqrt{\mathsf{P}}(\mbox{A}) = \int_{\mbox{A}} \sqrt{p}\, d\lambda, \quad \mbox{A}\in \mathcal{A}.
$$
This measure plays the role of \emph{unweighting} measure, in the sense that it allows to consistently map the \emph{weighted} Bayes space $\mathcal{B}^2(\mathsf{P})$ into a subset of the \emph{unweighted} $L^2$ space. We refer the reader to the scheme in Figure \ref{fig:scheme-spaces} as a concise representation of these relationships.

\begin{mdframed}
\edef\myindent{\the\parindent}
\begin{minipage}{0.9\textwidth}\setlength{\parindent}{\myindent}
\vspace*{0.4cm}
\begin{equation*}
\begin{tikzcd}
L^{2}_0(\lambda)
\arrow[r, shift left, leftarrow ,"\clr_{\lambda}"]
\arrow[r, shift right, swap, "\exp"]
& \mathcal{B}^{2}(\lambda)
\arrow[r, shift left, "\ominus p"]
\arrow[r, shift right, leftarrow, swap, "\oplus p"]
& \mathcal{B}^{2}(\mathsf{P})
\arrow[r, shift left, "\clr_\mathsf{P}"] \arrow[r, shift right, leftarrow, swap, "\exp"]
\arrow[rd, "\clr_{u}"]
\arrow[d, shift left,"\omega^{-1}"]
&[3.5em]
L^{2}_0(\mathsf{P})
\arrow[d,shift left, "\omega_2^{-1}"]\\[3.5em]
& & \mathcal{B}^{2}(\lambda)
\arrow[r, shift left, "\clr_{\sqrt{\mathsf P}}"]
\arrow[r, shift right, leftarrow, swap, "\exp"]
\arrow[u, shift left,"\omega"]
& L^{2}_{0,\mathsf{\sqrt{P}}}(\lambda)
\arrow[u, shift left,"\omega_2"]
\end{tikzcd}
\end{equation*}
\end{minipage}
\end{mdframed}

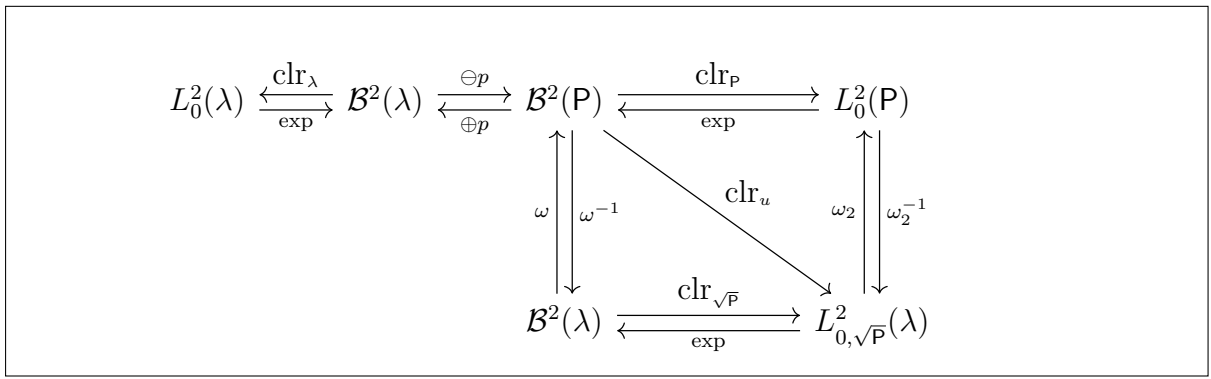
\captionof{figure}{Relationships among weighted and unweighted Bayes spaces, $\mathcal{B}^2(\mathsf{P})$ and $\mathcal{B}^2(\lambda)$, and weighted and unweighted $L^2(\mathsf{P})$ and $L^2(\lambda)$ spaces.\label{fig:scheme-spaces}}

\vspace*{1cm}
We define the $\mathcal{B}^2$-\emph{weighting} map $\omega$ as
\begin{equation}\label{eq:omega}
\begin{split}
\omega:\mathcal{B}^2(\lambda) & \rightarrow \mathcal{B}^2(\mathsf P) \\
\varphi & \mapsto \omega(\varphi) = \varphi^{1/\sqrt{p}},
\end{split}
\end{equation}
where $p = \frac{d\mathsf{P}}{d\lambda}$ (recall: $p$ is assumed to be strictly positive in $\Omega$).
The map $\omega$ defines a bijection between $\mathcal{B}^2(\lambda)$ and $\mathcal{B}^2(\mathsf P)$, as proved in the following proposition.

\begin{proposition}\label{lemma:omega}
The map $\omega$ defined in \eqref{eq:omega} is one-to-one and onto.
\end{proposition}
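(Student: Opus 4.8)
The plan is to show that $\omega$ is a bijection by exhibiting an explicit two-sided inverse, namely the \emph{unweighting} map $\omega^{-1}(\psi) = \psi^{\sqrt{p}}$, and to check that both $\omega$ and this candidate inverse genuinely land in the stated codomains. First I would recall that, by the definitions in Section~\ref{subsec:weightedBayes}, the powering operation is applied pointwise: for $\varphi \in \mathcal{B}^2(\lambda)$, the density $\omega(\varphi) = \varphi^{1/\sqrt{p}}$ is interpreted as a $\mathcal{B}^2(\mathsf{P})$-density via $\varphi^{1/\sqrt p}(t)$ for $t \in \Omega$. Since $p$ is assumed strictly positive on $\Omega$, the exponent $1/\sqrt{p}(t)$ is a well-defined finite positive number at every point, so the pointwise power is well defined and positive.

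Next I would verify the membership claims, which amount to checking the square-log-integrability conditions defining the two Bayes spaces. For $\omega$ to map into $\mathcal{B}^2(\mathsf{P})$ we need $\int_\Omega \bigl|\ln \omega(\varphi)\bigr|^2\, d\mathsf{P} < +\infty$; but $\ln \omega(\varphi)(t) = \tfrac{1}{\sqrt{p(t)}}\ln \varphi(t)$, so $\int_\Omega \bigl|\ln \omega(\varphi)\bigr|^2\, d\mathsf{P} = \int_\Omega \tfrac{1}{p(t)} (\ln\varphi(t))^2\, p(t)\, d\lambda(t) = \int_\Omega (\ln\varphi(t))^2\, d\lambda(t) < +\infty$, the last inequality holding because $\varphi \in \mathcal{B}^2(\lambda)$. (A small caveat: one should really compare with respect to a fixed background reference so that the log-density condition is unambiguous; here the natural reading, consistent with the chain-rule formulas in the excerpt, is to take clr/log-densities against $\lambda$, and with that reading the computation above is exactly the content of the membership check.) Symmetrically, for $\psi \in \mathcal{B}^2(\mathsf{P})$ one computes $\int_\Omega (\ln \psi^{\sqrt p})^2\, d\lambda = \int_\Omega p\,(\ln\psi)^2\, d\lambda = \int_\Omega (\ln\psi)^2\, d\mathsf{P} < +\infty$, so $\omega^{-1}$ indeed maps $\mathcal{B}^2(\mathsf{P})$ back into $\mathcal{B}^2(\lambda)$.

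Finally I would check that the two maps compose to the identity, which is immediate at the level of densities: $(\omega^{-1} \circ \omega)(\varphi) = (\varphi^{1/\sqrt p})^{\sqrt p} = \varphi^{(1/\sqrt p)\cdot \sqrt p} = \varphi$ pointwise, and likewise $(\omega \circ \omega^{-1})(\psi) = \psi$; since $\mathcal{B}$-equivalence only coarsens pointwise equality, these identities persist in the equivalence classes. This gives injectivity (from $\omega^{-1}\circ\omega = \mathrm{id}$) and surjectivity (from $\omega\circ\omega^{-1} = \mathrm{id}$), completing the proof. I do not anticipate a serious obstacle; the only point requiring care is a consistent convention for what ``square-integrable log-density'' means when $\mathsf{P}$ and $\lambda$ are not mutually normalized, and making sure that the powering by the non-constant exponent $1/\sqrt p$ is legitimate — both of which are handled by the strict positivity of $p$ and the change-of-variables identity $d\mathsf{P} = p\, d\lambda$ used above.
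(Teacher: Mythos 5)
Your proposal is correct and follows essentially the same route as the paper: both arguments hinge on the single change-of-variables identity $\int_\Omega \ln^2[\omega(\varphi)]\,d\mathsf{P} = \int_\Omega \ln^2(\varphi)\,d\lambda$ (using $d\mathsf{P}=p\,d\lambda$ and strict positivity of $p$) to verify membership in both directions, with $\psi\mapsto\psi^{\sqrt{p}}$ as the explicit inverse. Your additional explicit check that the two maps compose to the identity is a harmless elaboration of what the paper leaves implicit.
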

\begin{proof}
For $\varphi\in \mathcal{B}^{2}(\lambda)$, let be $\omega(\varphi) = \varphi^{1/\sqrt{p}}$. Clearly, $\omega(\varphi)$ is uniquely defined. Then $\omega(\varphi)\in \mathcal{B}^{2}(\mathsf P)$ due to
\[
\int_{\Omega} \ln^2(\varphi)\, d\lambda = \int_{\Omega} \ln^2[\omega(\varphi)^{\sqrt{p}}]\, d\lambda = \int_{\Omega} \ln^2[\omega(\varphi)]{p}\, d\lambda = 
\int_{\Omega} \ln^2[\omega(\varphi)]\, d\mathsf P.\]
Inversely, for $\omega(\varphi)\in \mathcal{B}^{2}(\mathsf P)$, let be $\varphi = \omega(\varphi)^{\sqrt{p}}$. Then, based on the same arguments, it results that $\varphi\in \mathcal{B}^{2}(\lambda)$.
\end{proof}
The inverse $\omega^{-1}$ is defined as $\omega^{-1}(\psi) = \psi^{\sqrt{p}}$ and it is interpreted as a $\mathcal{B}^2$-\emph{unweighting} map. It is represented in the bottom left part of the scheme in Figure \ref{fig:scheme-spaces}. Obviously, both $\omega$ and $\omega^{-1}$ depend on the scale of $\mathsf{P}$.

We define the $L^2$-\emph{weighting} map $\omega_2$ as
\begin{eqnarray*}
\omega_2:L^2(\lambda)&\rightarrow&L^2(\mathsf P)\\
\eta &\mapsto& \omega(\eta) = \eta/ \sqrt{p}.
\end{eqnarray*}
Using the same rationale as for Proposition \ref{lemma:omega}, it can be proved that $\omega_2$ defines a bijection between $L^2(\lambda)$ and $L^2(\mathsf P)$.
Its inverse $\omega_2^{-1}$ is defined as $\omega^{-1}_2(\xi) = \xi{\sqrt{p}}$ and it is interpreted as a $L^2$-\emph{unweighting} map. It is represented in the bottom right part of the scheme in Figure \ref{fig:scheme-spaces}.
Note that $\omega$ is non-linear with respect to the Bayes space geometry, as well as $\omega_2$ is non-linear in $L^2$.

Using \eqref{eq: inner}, the map $\clr_{u}: B^2(\mathsf P) \rightarrow L^2(\lambda)$ can be then defined as
\begin{equation} \label{clr2}
\clr_{u}(f_\mathsf{P}) = \omega_2^{-1}[\clr_\mathsf{P}(f_\mathsf{P})].
\end{equation}
It can be proven that \eqref{clr2} fulfills all the properties detailed in \eqref{clr2-prop}. Note that the scale of $\clr_{u}$ depends on the scale of $\sqrt{p}$, hence on the scale of $\sqrt{\mathsf{P}}$, because of the non-linearity of $\omega_2$ (see \cite{egozcue11} for the case of finite-dimensional compositions). As such, similarly to the multivariate case \cite{egozcue16}, the scale of the reference measure is relevant in the geometry of both weighted and unweighted spaces.

It is worth noticing that $\clr_{u}$ is closely related to a different centered log-ratio transformation. This is defined on the unweighted space $\mathcal{B}^{2}(\lambda)$ and induced by the unweighting measure $\sqrt{\mathsf{P}}$. Indeed, let $L^{2}_{0,\sqrt{\mathsf{P}}}(\lambda)$  be the subspace of $L^{2}(\lambda)$ such that $\int_\Omega f\, d \sqrt{\mathsf{P}} = 0$ for $f \in L^{2}(\lambda)$. Let's define on $\mathcal{B}^{2}(\lambda)$ the map $\clr_{\sqrt{\mathsf{P}}}$ as
\begin{equation} \label{clrsqrtp}
\clr_{\sqrt{\mathsf{P}}}(\varphi)(t) = \ln \varphi(t) - \frac{1}{\mathsf{\sqrt{P}}(\Omega)} \int_{\Omega} \ln [\varphi(u)]  \, d\sqrt{\mathsf{P}} (u), \quad t \in \Omega, \quad \varphi\in \mathcal{B}^{2}(\lambda).
\end{equation}
In light of Proposition \ref{lemma:omega}, it is easy to see that the map \eqref{clrsqrtp} is well defined. For any $\varphi\in \mathcal{B}^{2}(\lambda)$, we can set $f_{\mathsf P} \in \mathcal{B}^2(\mathsf{P})$ to $f_{\mathsf P} = \omega(\varphi) = \varphi^{1/\sqrt{p}}$. Then, it holds that
\[
\int_{\Omega} \ln [\varphi(u)]  \, d\sqrt{\mathsf{P}} (u) = \int_{\Omega} \ln [f_{\mathsf P}(u)] p(u) \,d\lambda (u) < +\infty.\]
Moreover, for any $\varphi$ in $\mathcal{B}^{2}(\lambda)$, we have that $\clr_{\sqrt{\mathsf{P}}}(\varphi)\in L^2_{0,\sqrt{\mathsf{P}}}(\lambda)$.
The following proposition establishes the close relationship between $\clr_{u}$ and $\clr_\mathsf{\sqrt{P}}$, thus completing the scheme in Figure \ref{fig:scheme-spaces}.

\begin{proposition}
The following statements hold true.
\begin{enumerate}
\item[(i)] The image of the space $\mathcal{B}^{2}(\mathsf{P})$ under the map $\clr_{u}$ defined in \eqref{clr2} is $L^{2}_{0,\sqrt{\mathsf{P}}}(\lambda)$.
\item[(ii)] The map $\clr_{u}$ coincides with the composed function $\clr_{\sqrt{\mathsf P}}\circ\omega^{-1}$, i.e.
\[
\clr_{u}(f_{\mathsf P}) = \clr_{\sqrt{\mathsf P}}(\omega^{-1}(f_{\mathsf P})) \quad \text{and} \quad f_{\mathsf P}\in \mathcal{B}^{2}(\mathsf{P}).\]
\item[(iii)] The inverse of the map $\clr_\mathsf{\sqrt{P}}$ is $\clr_\mathsf{\sqrt{P}}^{-1}: L^{2}_{0,\sqrt{\mathsf{P}}}(\lambda)\rightarrow \mathcal{B}^{2}(\lambda)$  and is given by
\[
\clr_\mathsf{\sqrt{P}}^{-1}(\psi) =_{\mathcal{B}^{2}(\lambda)} \exp(\psi),\]
for any $\psi$ in $L^{2}_{0,\sqrt{\mathsf{P}}}$.
\item[(iv)] The inverse of the map $\clr_{u}$ is $\clr_{u}^{-1}: L^{2}_{0,\sqrt{\mathsf{P}}}(\lambda)\rightarrow \mathcal{B}^{2}(\mathsf P)$  and is given by
\[
\clr_{u}^{-1}(\psi) =_{\mathcal{B}^{2}(\mathsf{P})} \exp[\omega_2(\psi)] =_{\mathcal{B}^{2}(\mathsf{P})} \omega[\exp(\psi)],\]
for any $\psi$ in $L^{2}_{0,\sqrt{\mathsf{P}}}$.
\end{enumerate}
\end{proposition}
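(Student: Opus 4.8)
The plan is to deduce all four statements from the structural facts already established: $\clr_\mathsf{P}$ is an isometric isomorphism of $\mathcal{B}^2(\mathsf{P})$ onto $L^2_0(\mathsf{P})$ with inverse $\exp$ (Section~\ref{subsec:weightedBayes}); $\omega$ and $\omega_2$ are bijections with inverses $\psi\mapsto\psi^{\sqrt{p}}$ and $\xi\mapsto\xi\sqrt{p}$ (Proposition~\ref{lemma:omega}); and the Radon--Nikodym identities $d\mathsf{P}=p\,d\lambda$, $d\sqrt{\mathsf{P}}=\sqrt{p}\,d\lambda$, together with the zero-integral constraints $\int_\Omega\clr_\mathsf{P}(f_\mathsf{P})\,d\mathsf{P}=0$ and $\int_\Omega\psi\,d\sqrt{\mathsf{P}}=0$ for $\psi\in L^2_{0,\sqrt{\mathsf{P}}}(\lambda)$. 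I would prove the parts in the order (iii), (i), (iv), (ii), since each feeds the next. For (iii): $\clr_{\sqrt{\mathsf{P}}}(\varphi)=\ln\varphi-c$ with $c$ a constant, so it lies in $L^2(\lambda)$ (because $\ln\varphi\in L^2(\lambda)$), and integrating against $\sqrt{\mathsf{P}}$ shows it lies in $L^2_{0,\sqrt{\mathsf{P}}}(\lambda)$; applying $\exp$ returns $e^{-c}\varphi=_{\mathcal{B}^2(\lambda)}\varphi$ since $e^{-c}$ is a positive constant, while for $\psi\in L^2_{0,\sqrt{\mathsf{P}}}(\lambda)$ the centring term of $\clr_{\sqrt{\mathsf{P}}}(e^\psi)$ vanishes, giving $\clr_{\sqrt{\mathsf{P}}}(e^\psi)=\psi$. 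Hence $\clr_{\sqrt{\mathsf{P}}}$ is bijective with inverse $\exp$.

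For (i): write $\clr_u(f_\mathsf{P})=\omega_2^{-1}(\clr_\mathsf{P}(f_\mathsf{P}))=\sqrt{p}\cdot\clr_\mathsf{P}(f_\mathsf{P})$. Then $\int_\Omega[\clr_u(f_\mathsf{P})]^2\,d\lambda=\int_\Omega[\clr_\mathsf{P}(f_\mathsf{P})]^2\,d\mathsf{P}<+\infty$ and $\int_\Omega\clr_u(f_\mathsf{P})\,d\sqrt{\mathsf{P}}=\int_\Omega\clr_\mathsf{P}(f_\mathsf{P})\,d\mathsf{P}=0$, so the image is contained in $L^2_{0,\sqrt{\mathsf{P}}}(\lambda)$. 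For the reverse inclusion, given $\psi\in L^2_{0,\sqrt{\mathsf{P}}}(\lambda)$ put $\xi:=\omega_2(\psi)=\psi/\sqrt{p}$; the same changes of measure give $\xi\in L^2(\mathsf{P})$ and $\int_\Omega\xi\,d\mathsf{P}=\int_\Omega\psi\,d\sqrt{\mathsf{P}}=0$, so $\xi\in L^2_0(\mathsf{P})$, and then $f_\mathsf{P}:=\exp(\xi)\in\mathcal{B}^2(\mathsf{P})$ satisfies $\clr_u(f_\mathsf{P})=\omega_2^{-1}(\clr_\mathsf{P}(\exp\xi))=\omega_2^{-1}(\xi)=\psi$.

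For (iv) and then (ii): I would check directly that $\exp\circ\omega_2$ is a two-sided inverse of $\clr_u$. For $\psi\in L^2_{0,\sqrt{\mathsf{P}}}(\lambda)$ we have $\clr_\mathsf{P}(\exp(\omega_2\psi))=\omega_2\psi-\tfrac{1}{\mathsf{P}(\Omega)}\int_\Omega\omega_2\psi\,d\mathsf{P}=\omega_2\psi$, because $\int_\Omega\omega_2\psi\,d\mathsf{P}=\int_\Omega\psi\,d\sqrt{\mathsf{P}}=0$; hence $\clr_u(\exp(\omega_2\psi))=\omega_2^{-1}(\omega_2\psi)=\psi$. Conversely $\exp(\omega_2(\clr_u f_\mathsf{P}))=\exp(\omega_2\omega_2^{-1}\clr_\mathsf{P}(f_\mathsf{P}))=\exp(\clr_\mathsf{P}(f_\mathsf{P}))=_{\mathcal{B}^2(\mathsf{P})}f_\mathsf{P}$. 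Thus $\clr_u^{-1}=\exp\circ\omega_2$; and since $\omega(e^\psi)=(e^\psi)^{1/\sqrt{p}}=e^{\psi/\sqrt{p}}=\exp(\omega_2\psi)$ pointwise, this is exactly the content of (iv). Combining with (iii), $\clr_u^{-1}=\omega\circ\exp=\omega\circ\clr_{\sqrt{\mathsf{P}}}^{-1}$, and inverting yields $\clr_u=\clr_{\sqrt{\mathsf{P}}}\circ\omega^{-1}$, which is (ii).

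The only real subtlety is bookkeeping: each integrability or zero-mean assertion must be transported through the correct Radon--Nikodym factor, $p$ or $\sqrt{p}$. The delicate point is (ii): whereas $\clr_u(f_\mathsf{P})$ depends only on the $\mathcal{B}^2(\mathsf{P})$-equivalence class of $f_\mathsf{P}$, the maps $\omega^{-1}$ and $\clr_{\sqrt{\mathsf{P}}}$ taken individually are sensitive to the normalization of the chosen representative (a constant rescaling of $f_\mathsf{P}$ becomes the non-constant factor $c^{\sqrt{p}}$ after $\omega^{-1}$), so a naive termwise substitution is awkward; routing (ii) through the inverse maps as above avoids having to fix a representative by hand.
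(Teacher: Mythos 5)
Your proof is correct, and all the individual computations (transporting integrability and zero-mean conditions through the Radon--Nikodym factors $p$ and $\sqrt{p}$, and cancelling centring terms via the zero-integral constraints) match what the paper does. The logical architecture is different, though, and in two respects your version buys something. First, for (i) the paper only verifies $\int_\Omega \clr_{u}(f_\mathsf{P})\,d\sqrt{\mathsf{P}}=0$, i.e.\ that the image is \emph{contained} in $L^{2}_{0,\sqrt{\mathsf{P}}}(\lambda)$; you additionally prove surjectivity by exhibiting the preimage $\exp(\omega_2\psi)$ of an arbitrary $\psi$, which is needed for the word ``is'' in the statement and is also what makes the inverse maps in (iii)--(iv) globally defined on $L^{2}_{0,\sqrt{\mathsf{P}}}(\lambda)$. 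Second, the paper proves (ii) by a direct computation of $\clr_{\sqrt{\mathsf{P}}}(f_\mathsf{P}^{\sqrt{p}})$ in which the centring term is killed by silently choosing the representative $f_\mathsf{P}=\exp\xi$ with $\xi=\clr_\mathsf{P}(f_\mathsf{P})\in L^2_0(\mathsf{P})$, and then dismisses (iv) as an ``obvious consequence'' of (iii); you instead verify (iv) directly as a two-sided inverse and obtain (ii) by inverting $\clr_{u}^{-1}=\omega\circ\clr_{\sqrt{\mathsf{P}}}^{-1}$. Your closing remark about representative-dependence is exactly the right thing to flag: since $\omega^{-1}(c f_\mathsf{P})=c^{\sqrt{p}}f_\mathsf{P}^{\sqrt{p}}$ is not a constant rescaling, $\clr_{\sqrt{\mathsf{P}}}\circ\omega^{-1}$ is sensitive to the chosen representative while $\clr_{u}$ is not, so (ii) only holds literally for the canonical representative — the same implicit choice the paper makes in its direct computation. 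Your route does not eliminate this issue (it resurfaces in treating $\omega$ as a bijection, as the paper's Proposition 2.1 already does at the level of functions rather than equivalence classes), but it isolates it cleanly rather than burying it in a cancellation step.
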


\begin{proof}
\emph{Statement (i)}. Let's denote by $f_\mathsf{P}$ a density in $\mathcal{B}^2(\mathsf{P})$. Then
\[
\int_{\Omega} \clr_{u}(f_\mathsf{P})\, d\sqrt{\mathsf{P}} = \int_{\Omega} \clr_{u}(f_\mathsf{P})\,\sqrt{p}\, d\lambda = \int_{\Omega} \clr_\mathsf{P}(f_\mathsf{P})\,d\mathsf{P} = 0,\]
proving the first statement.

\emph{Statement (ii)}. Consider $f_\mathsf{P}\in\mathcal{B}^2(\mathsf{P})$. Then
\begin{align}\label{eq:proof-prop-eq1}
\nonumber\clr_{\sqrt{\mathsf{P}}}(\omega^{-1}(f_\mathsf{P})) &= \ln (f_\mathsf{P}^{\sqrt{p}}) - \frac{1}{\sqrt{\mathsf P}(\Omega)}\int_{\Omega} \ln (f_\mathsf{P}^{\sqrt{p}})\, d \sqrt{\mathsf P} = \\
& = {\sqrt{p}} \ln (f_\mathsf{P}) - \frac{1}{\sqrt{\mathsf P}(\Omega)}\int_{\Omega} \ln (f_\mathsf{P}) p\, d \lambda.
\end{align}
Let's call $\xi\in L^{2}_{0}(\mathsf{P})$ the element $\xi = \clr_\mathsf{P}(f_{\mathsf P})$. Since $\clr_\mathsf{P}$ is one-to-one and onto between $\mathcal{B}^2(\mathsf{P})$ and $L^{2}_{0}(\mathsf{P})$, it holds that $f_{\mathsf P} =_{\mathcal{B}^2(\mathsf{P})} \exp{\xi}$ and we can rewrite
\[
\clr_{\sqrt{\mathsf{P}}}(\omega^{-1}(f_\mathsf{P})) = \xi\sqrt{p},
\]
where the last term of \eqref{eq:proof-prop-eq1} cancels because $\xi \in L^{2}_{0}(\mathsf{P})$.
Considering $\clr_{u}$, using the same notation as before, it results that
\[
\clr_{u}(f_\mathsf{P}) = \sqrt{p}\cdot\left[\ln (f_\mathsf{P}) - \frac{1}{\mathsf P(\Omega)}\int_{\Omega} \ln (f_\mathsf{P})\, d \mathsf P \right] = \xi{\sqrt{p}}.
\]

\emph{Statement (iii)}. For $\psi \in L^2_{0,\sqrt{\mathsf{P}}}$, it holds that
\[
\clr_\mathsf{\sqrt{P}}[\exp(\psi)](u) = \ln[\exp(\psi)] - \frac{1}{\sqrt{\mathsf{P}}(\Omega)}\int_{\Omega}\ln[\exp(\psi(u))]d\sqrt{\mathsf{P}}(u) = \psi(u),\]
for any $u \in \Omega$.

\emph{Statement (iv)}. This is an obvious consequence of the previous point (iii).
\end{proof}

\vspace*{0.5cm}
Note that taking the $\mathcal{B}^2$-\emph{unweighting} transformation $\omega^{-1}$ is indeed different from simply changing the reference measure from $\mathsf{P}$ to $\lambda$. The former transformation is indeed used to represent the weighted Bayes space through an unweighted one, while preserving its weighted Hilbert geometry. In fact, as further highlighted in Sections \ref{sec:simu} and \ref{sec:casestudy}, this auxiliary space may serve to enhance interpretation of the weighted structure: the $\mathcal{B}^2$-\emph{unweighting} $\mathsf P$-densities can be interpreted in the same way as we interpret unweighted PDFs. It is also clear that, as long as the Lebesgue reference measure is concerned ($\mathsf{P}(\Omega) = \lambda([a,b])$),  the transformations $\clr_{u}$ and $\clr_\mathsf{P}$ coincide, and they reduce to the clr transformation used in \cite{hron16,menafoglio14} -- here denoted by $\clr_{\lambda}$. Note, however, that this would not be true for \textit{any} uniform measure because the scale of the reference does have an impact on the Hilbert geometry.

\medskip
The above considerations have a direct impact on applications. For a sample of densities $f_1, \ldots, f_N$ to be analyzed with respect to a reference measure $\mathsf{P}$, the following strategy can be adopted:
\begin{enumerate}
\item Set the reference measure $\mathsf P$.
\item If the PDFs were given w.r.t. the Lebesgue measure, change the reference measure from $\lambda$ to $\mathsf{P}$. That is, set $f_{\mathsf{P},i} = f_i \ominus p$, for $i=1, \ldots, N$, with $f_{\mathsf{P},i}\in \mathcal{B}^2(\mathsf{P})$.
	\item Map $f_{\mathsf{P},i}$, for $i=1, \ldots, N$, onto $L_{0, \sqrt{\mathsf{P}}}^2(\lambda)$ by using the $\clr_{u}$ transformation. Set $y_i = \clr_{u}(f_{\mathsf{P},i})$, for $i=1, \ldots, N$.
	\item Perform the statistical analysis on $y_i$, $i=1, \ldots, N$, using \emph{unweighted} $L^2_0$ ($L_{0, \sqrt{\mathsf{P}}}^2(\lambda)$) methods.
	\item If the results needs to be given in terms of densities, use the inverse transformation
	$\exp[\clr_{u}(f_\mathsf{P})]$ to express the results in the unweighted space $\mathcal{B}^2(\lambda)$, where they can be easily interpreted.
\end{enumerate}
This strategy is further illustrated in the Section \ref{subsec:wSFPCA}, which presents a dimensionality reduction method in weighted Bayes spaces.

\section{Statistical methods in weighted Bayes spaces: weighted SFPCA}\label{subsec:wSFPCA}
Simplicial functional principal component analysis (SFPCA, \cite{hron16}) was recently introduced to adapt the well-known functional principal component analysis \cite{ramsay05} to density functions. It is grounded on the theory of Bayes spaces and assumes that the Lebesgue measure is set as reference measure. SFPCA aims to explore the main modes of \emph{relative} variability in a sample of density data and can be used to suggest a possible dimensionality reduction of a dataset of PDFs.
In this section, we extend the SFPCA to its weighted version, named hereafter wSFPCA. Besides its relevance in applications, this extension serves as an illustrative example of the strategy detailed in Subsection \ref{subsec:clrs}.

Let's denote by $f_{1}, \ldots, f_{N}$ an i.i.d. sample in $\mathcal{B}^2(\lambda)$. After selecting the reference measure $\mathsf{P}$ with $\lambda$-density $p$, a sample ${f}_{\mathsf{P},i} = f_i \ominus p$, for $i=1, \ldots, N$, in $\mathcal{B}^2(\mathsf{P})$ is obtained.
We assume without loss of generality this sample is mean-centered. If this is not the case, it is enough to consider $\tilde{f}_{\mathsf{P},i} = {f}_{\mathsf{P},i} \ominus \bar{f}_{\mathsf{P}}$, where $\bar{f}_{\mathsf{P}}$ stands for the (weighted) sample mean of the observed (weighted) densities \[
\bar{f}_{\mathsf{P}} = \frac{1}{N} \odot_{\mathsf{P}}
{\bigoplus}_{i=1}^N {f}_{\mathsf{P},i}.\]
Note that the centering operation shifts the center of the sample to the neutral element of the (weighted) perturbation operation. 
That is, the uniform density on $\mathcal{B}^2(\mathsf{P})$. 

The aim of wSFPCA is to identify a collection of orthogonal and normalized $\mathsf{P}$-density functions $\left\{\xi_{\mathsf{P},j}\right\}_{j\geq 1}$ in $\mathcal{B}^2(\mathsf{P})$  corresponding to the directions in $\mathcal{B}^2(\mathsf{P})$ along which the dataset displays its main modes of variability. These directions are called weighted simplicial functional principal components (wSFPCs), and they are obtained by maximizing the following objective function
\begin{equation} \label{eq_max}
\sum_{i=1}^N \left\langle f_{\mathsf{P},i}, \xi_{\mathsf{P}} \right\rangle_{\mathcal{B}(\mathsf{P})}^2 \: \text{subject to }
\left\| \xi_{\mathsf{P}}\right\|_{\mathcal{B}(\mathsf{P})} = 1; \: \text{with} \qquad \left\langle \xi_{\mathsf{P}}, \xi_{\mathsf{P},k} \right\rangle_{\mathcal{B}(\mathsf{P})} =0, \: k<j,
\end{equation}
over $\xi_{\mathsf{P}}$ in $\mathcal{B}^2(\mathsf{P})$, where $\left\langle f_{\mathsf{P},i}, \xi_{\mathsf{P}} \right\rangle_{\mathcal{B}(\mathsf{P})}$ is the projection of $f_{\mathsf{P},i}$ along the direction in $\mathcal{B}^2(\mathsf{P})$ identified by $\xi_{\mathsf{P}}$, i.e., coordinate of $f_{\mathsf{P}}$ (Fourier coefficient). The orthogonality condition has only to be fulfilled for $j \geq 2$, and guarantees that the $j$th wSFPC $\xi_{\mathsf{P},j}$ is orthogonal to the first $j-1$ wSFPCs.

Since $\mathcal{B}^2(\mathsf{P})$ is a Hilbert space, the solution of the maximization problem (\ref{eq_max}) exists and is unique for all $j \in \left\{1, 2, \ldots, N-1\right\}$. It coincides with the set of eigenfunctions associated with the ordered eigenvalues of the sample covariance operator $V: \mathcal{B}^2(\mathsf{P}) \rightarrow \mathcal{B}^2(\mathsf{P})$, defined for $\xi_{\mathsf{P}} \in \mathcal{B}^2(\mathsf{P})$ as
\begin{equation} \label{V_oper}
V \xi_\mathsf{P} = \frac{1}{N} \odot_\mathsf{P}\bigoplus_{i=1}^N \left\langle f_{\mathsf{P},i}, \xi_{\mathsf{P}} \right\rangle_{\mathcal{B}(\mathsf{P})} \odot_\mathsf{P} f_{\mathsf{P},i}.
\end{equation}
The $j$th wSFPC $\xi_{\mathsf{P},j}$ is thus obtained by solving the eigenequation $V \xi_{\mathsf{P},j} = \rho_j \odot_{\mathsf{P}} \xi_{\mathsf{P},j}$. The $N-1$ eigenvalues $\rho_1 \geq \ldots \geq \rho_{N-1}$ represent the variability of the dataset along the directions of the associated eigenfunctions $\xi_{\mathsf{P},1}, \ldots, \xi_{\mathsf{P},N-1}$.

From the practical viewpoint, it is desirable to restate the problem of finding the eigenpairs $(\xi_{\mathsf{P},j}, \rho_j), j=1, \ldots, N-1$, in $\mathcal{B}^2(\mathsf{P})$ in terms of the unweighted $L^2$ spaces, i.e. $L^2_{0,\sqrt{\mathsf{P}}}(\lambda)$, where well-established computational methods are available.
To this end, consider the $\clr_{u}$ transformation of the data, i.e. $\clr_{u}(f_{\mathsf{P},1}),\ldots,
\clr_{u}(f_{\mathsf{P},N})$. Following the same arguments of \cite{hron16}, one can easily prove that performing a functional principal component analysis of the transformed dataset in $L^2_{0,\sqrt{\mathsf{P}}}(\lambda)$ yields the eigenpairs $\left(\clr_{u}(\xi_{\mathsf{P},j}), \rho_j\right), j=1, \ldots, N-1$. The resulting eigenfunctions
$\clr_{u}(\xi_{\mathsf{P},j})$ can be eventually transformed back into $\mathcal{B}^2(\mathsf{P})$, or into the unweighted $\mathcal{B}^2({\lambda})$, by using the corresponding inverse clr transformation (i.e. $\clr_{u}^{-1}$ or $\clr_{\sqrt{\mathsf{P}}}^{-1}$ respectively) to proceed with interpretation in the original space.

The results of wSFPCA can be interpreted, e.g. by analyzing the principal component scores, which are useful to inspect the relationships among observations. Note that the score $f_{ij}$ is a projection of the (centered) observation $f_{\mathsf{P},i}$ along the direction $\xi_{\mathsf{P},j}$, i.e. $f_{ij} = \left\langle f_{\mathsf{P},i}, \xi_{\mathsf{P},j} \right\rangle_{\mathcal{B}(\mathsf{P})} = \left\langle \clr_{u}(f_{\mathsf{P},i}), \clr_{u}(\xi_{\mathsf{P},j}) \right\rangle_{L^2(\lambda)}$, and thus the scores coincide in $\mathcal{B}^2(\mathsf{P})$ and $L^2(\lambda)$.
It is useful to visualize the mean density perturbed by the $j$th wSFPC $\xi_{\mathsf{P},j}$ powered by a suitable coefficient. This represents the variability around the mean function  along the direction of a given wSFPC, and can support the analyst in the definition of a weighting strategy for the dataset at hand. Indeed, in the context of general reference measures, the wSFPCs can be plotted and interpreted to see the effect of weighting the domain of the distributional variable according to alternative reference measures.
Finally, for the purpose of dimensionality reduction, the number of wSFPCs to be retained can be set by the commonly used scree plot. Particularly, searching for an elbow shape or setting a threshold on the portion of variance explained by wSFPCs as usually.

\section{Illustration using simulated densities from exponential families}
\label{sec:simu}

\subsection{Change of the reference measure: the consequences for density data}
\label{subsec:sim_log_wei}

In order to examine the effects of changing the reference measure, we simulate densities from two exponential families and analyze them with respect to different reference measures -- Lebesgue, uniform (as its normalized counterpart) and exponential measures. While the first two reference measures represent equal weighting on the respective domains, the last one is an example of down-weighting the right-hand side of domain, possibly to stress the relative scale along the domain of the data.

Inspired by the case study presented in Section \ref{sec:casestudy}, we consider a set of (truncated) log-normal densities with means $\mu_i = 0.6+0.25\cdot(i-1)$ and standard deviations $\sigma_j = 0.5+0.07\cdot(j-1)$ for $i, j=1, \ldots,9$, on the interval $I=\left[1,10\right]$. They are represented with respect to the Lebesgue measure and displayed in Figure \ref{fig:log-leb}, where the color scale follows the index $\kappa = j+9(i-1)$, $i,j = 1,...,9$ (i.e. equal mean values are represented with similar colors).
In this case, the transformations $\clr_\mathsf{P}$ and $\clr_{u}$ coincide (Figure \ref{fig:log-leb-clr}), and they are obtained as
\begin{equation} \label{clrden-lognorm}
\begin{split}
f^c_\lambda(t; \mu_i,\sigma_j) = &
 -\frac{\ln^2 t} {2\sigma_j^2} +  \left(-1+\frac{\mu_i}{\sigma_j^2}\right)  \left(\ln t - \frac{10}{9} \cdot \ln 10 + 9\right) + \\
& +\frac{1}{\sigma_j^2} \left(1+\frac{5}{9} \cdot \ln^2 10 -\frac{9}{10} \ln 10 \right), \: t \in I.
\end{split}
\end{equation}

\begin{figure}[h!]
    \centering
    \begin{subfigure}[t]{0.4\textwidth}
        \centering
        \includegraphics[width=\textwidth]{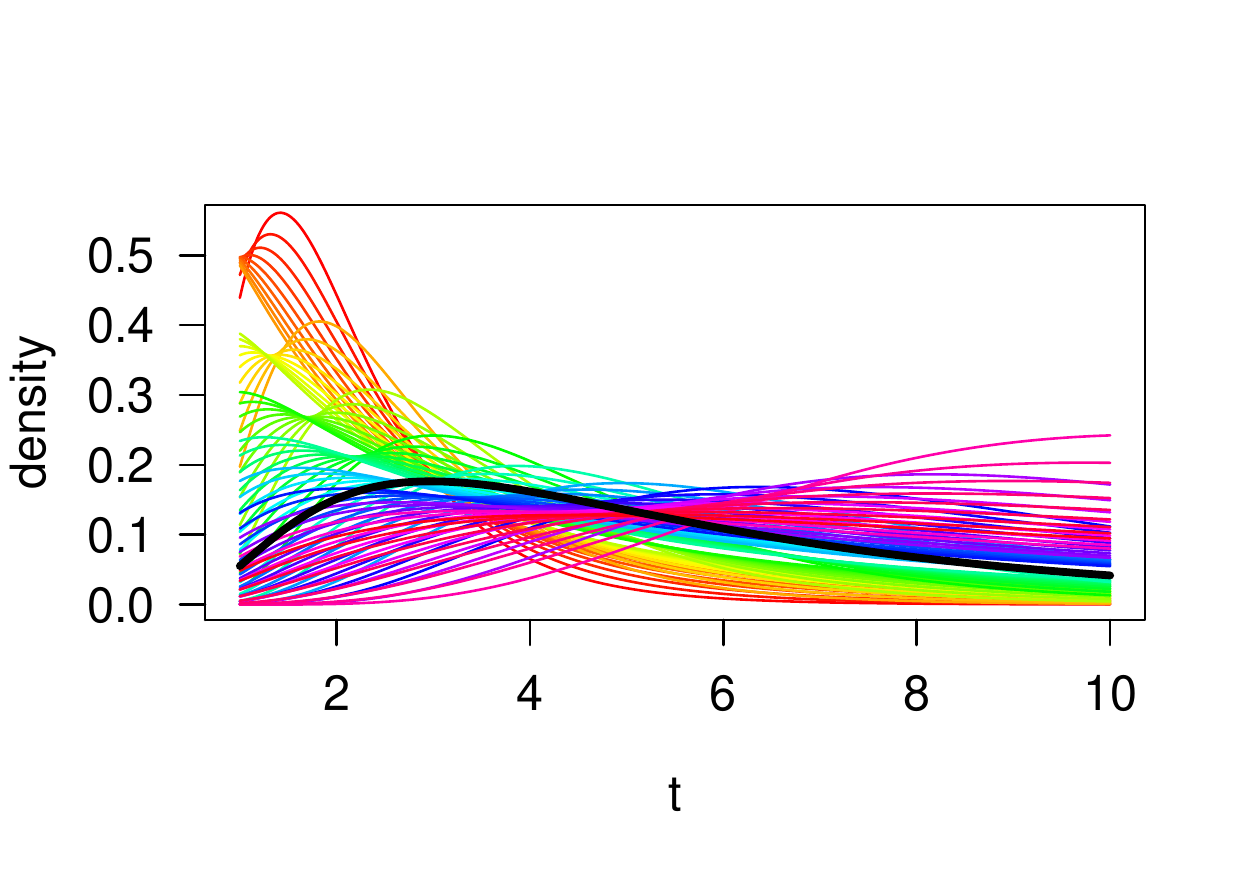}
        \caption{$\lambda$-density functions on $\mathcal{B}^2(\lambda)$.}
				\label{fig:log-leb}
    \end{subfigure}
		    \begin{subfigure}[t]{0.4\textwidth}
        \centering
        \includegraphics[width=\textwidth]{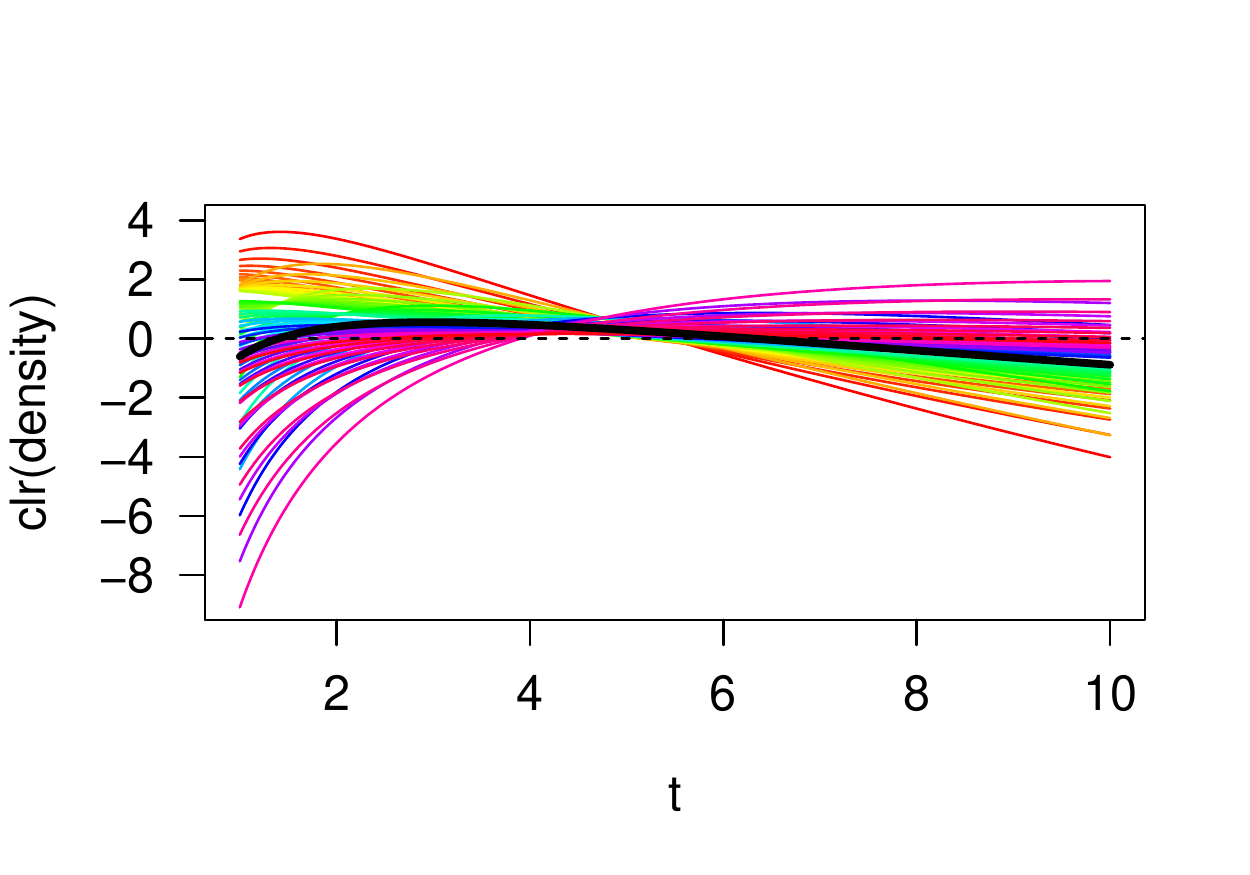}
        \caption{$\lambda$-density functions on $L^2_0(\lambda)$ (after $\clr_\lambda$ transformation).}
				\label{fig:log-leb-clr}
		\end{subfigure}  \hfill
		    \begin{subfigure}[t]{0.4\textwidth}
        \centering
        \includegraphics[width=\textwidth]{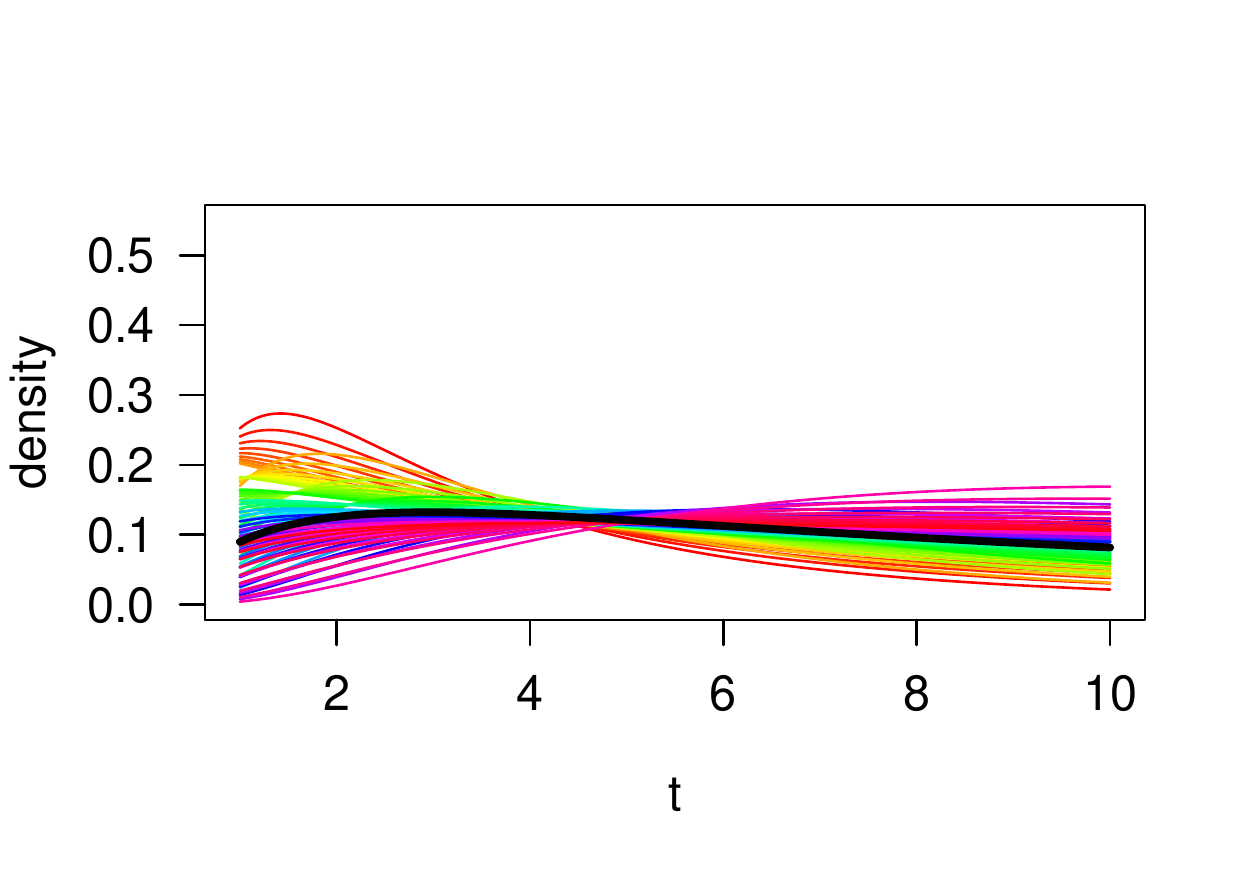}
        \caption{$\mathcal{B}^2$-unweighted $\mathsf{P}_0$-density functions on $\mathcal{B}^2(\lambda)$.}
				\label{fig:log-leb2}
    \end{subfigure}
		    \begin{subfigure}[t]{0.4\textwidth}
        \centering
        \includegraphics[width=\textwidth]{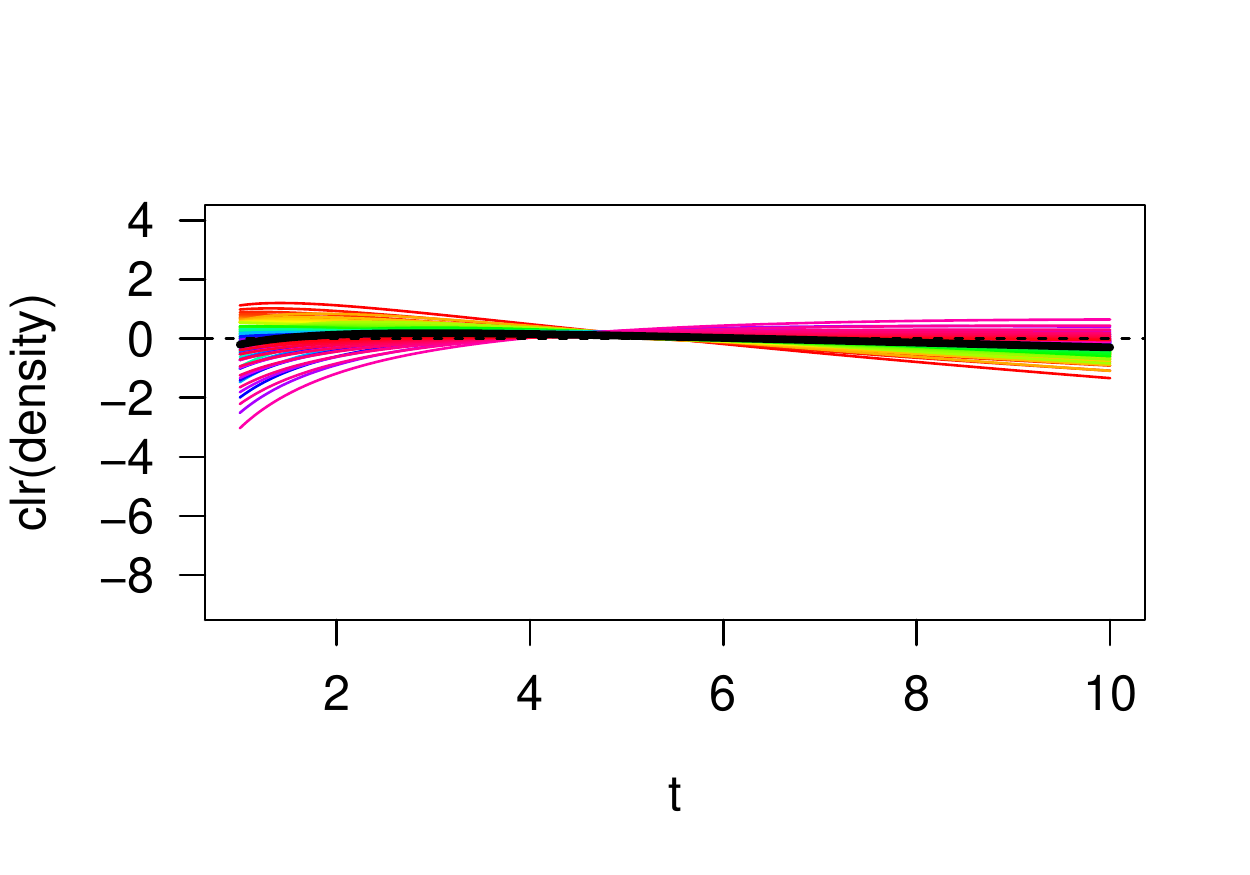}
        \caption{$\mathsf{P}_0$-density functions on $L^2_{0,\sqrt{\mathsf{P}_0}}(\lambda)$ (after $\clr_{u}$ transformation).}
				\label{fig:log-leb-clr2}
		\end{subfigure}
    \caption{Log-normal density functions w.r.t. the Lebesgue measure (panels (a)-(b)) and w.r.t. the uniform measure $\mathsf{P}_0$ (panels (c)-(d)), with parameters $\mu_i = 0.6+0.25\cdot(i-1)$ and $\sigma_j = 0.5+0.07\cdot(j-1)$ for $i, j=1, \ldots,9$, $I=\left[1,10\right]$. Black curves indicate the corresponding mean functions.
}
		\label{fig:Log-norm}
\end{figure}

To appreciate the influence of changing the scale of the reference measure, we set $\mathsf{P}_0$ to be the uniform measure on $I$, $\mathsf{P}_0 = \lambda/9$ (with density $p_0(t)=1/9$, for $t\in I$). The log-normal densities w.r.t. $\mathsf{P}_0$ are proportional to those in Figure \ref{fig:log-leb}, which is precisely the scaling effect induced by the reference measure. The $\clr_{\mathsf{P}_0}$ representations of the $\mathsf{P}_0$-densities coincide with those in Figure \ref{fig:log-leb-clr}; however, the former are embedded in $L^2(\mathsf{P}_0)$, whereas the latter do so in $L^2(\lambda)$. As such, a different scale is actually characterizing the two Bayes spaces.
The $\clr_{u}$ transformed densities, i.e. $y_i = (1/{3}) \cdot \clr_{\mathsf{P}_0}(f_{\mathsf{P}_0,i})$ -- which is an element of
$L^2_{0,\sqrt{\mathsf{P}_0}}$ --  are displayed in Figure \ref{fig:log-leb-clr2}. Here, the different scales of the two spaces are apparent. Finally, Figure \ref{fig:log-leb2} displays the $\mathcal{B}^2$-unweighted densities, i.e., $\omega^{-1}(f_{\mathsf{P}_0,i}) = (f_{\mathsf{P}_0,i})^{3}$, which are now elements of $\mathcal{B}^2(\lambda)$. A graphical representation like in Figure \ref{fig:log-leb2} may be very convenient in applications, as it allows to visually neglect the weighting of the domain when observing the figure.

Visual inspection of Figure \ref{fig:log-leb2} suggests that the scaling of the reference measure by $\alpha>1$ (or $\alpha<1$) results in a shrinkage (or expansion) of the corresponding Bayes space. The shrinkage of the Bayes space can be readily observed by comparing Figures \ref{fig:log-leb-clr} and \ref{fig:log-leb-clr2} (note that these representations are comparable because they are referred to the same reference $\lambda$). This is also well reflected in the covariance functions (Figure \ref{fig:Cov-log-norm}); indeed, the covariance structure is preserved but it differs in the scale. Here, the variability of the data, when these are embedded in $\mathcal{B}^2(\lambda)$ (resp. $\mathcal{B}^2(\mathsf{P}_0)$), is concentrated on the boundaries of the domain $I$. Particularly being more dominant in its left-hand side, where the densities display larger relative differences. Analogous conclusion can be derived from Figures \ref{fig:log-leb-clr} and \ref{fig:log-leb-clr2} respectively, but note that these graphs are interpreted in terms of absolute differences among curves in agreement with the $L^2$ geometry considered therein.
\begin{figure}
    \centering
        \includegraphics[width=0.35\textwidth]{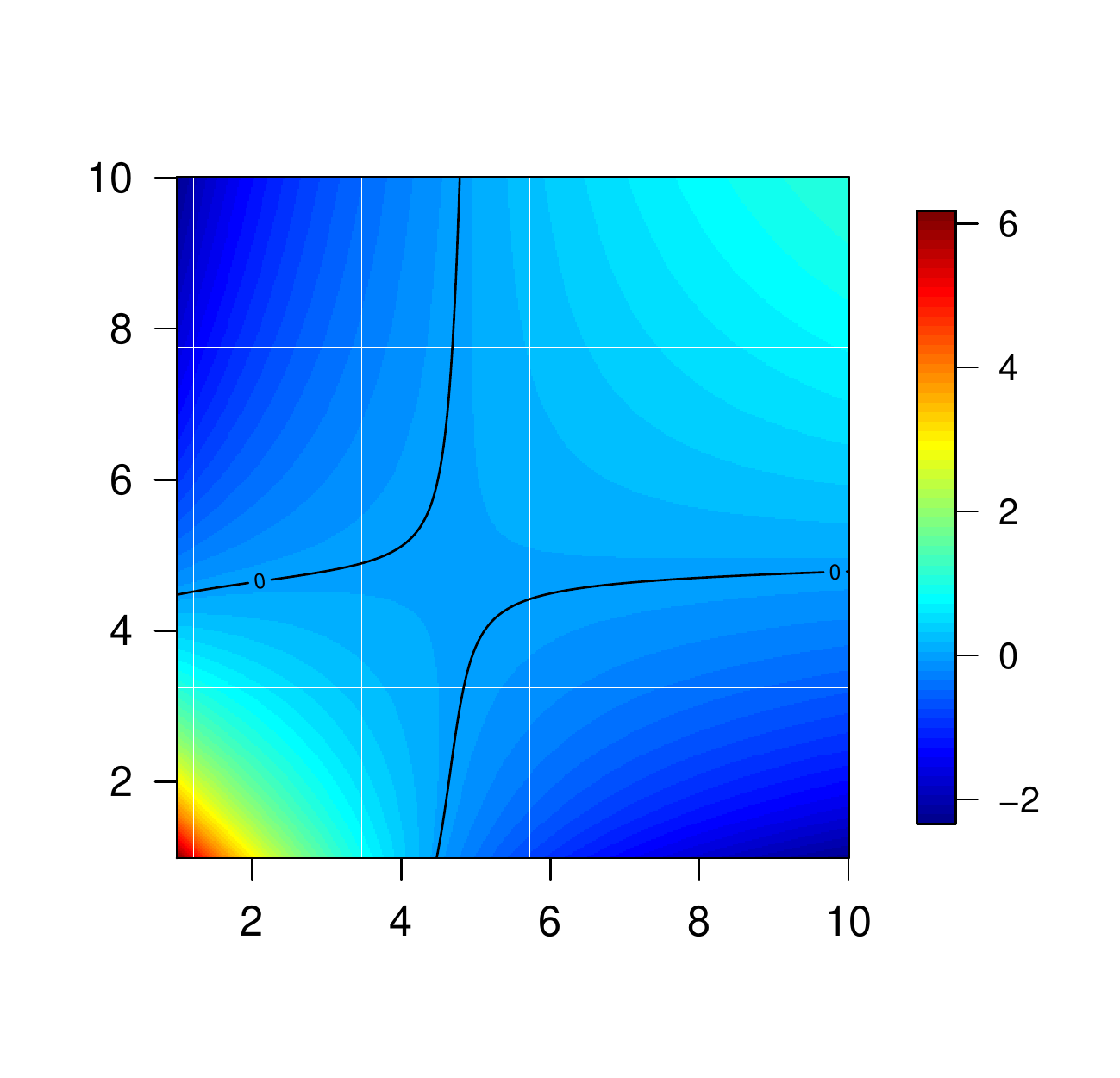}
        \includegraphics[width=0.35\textwidth]{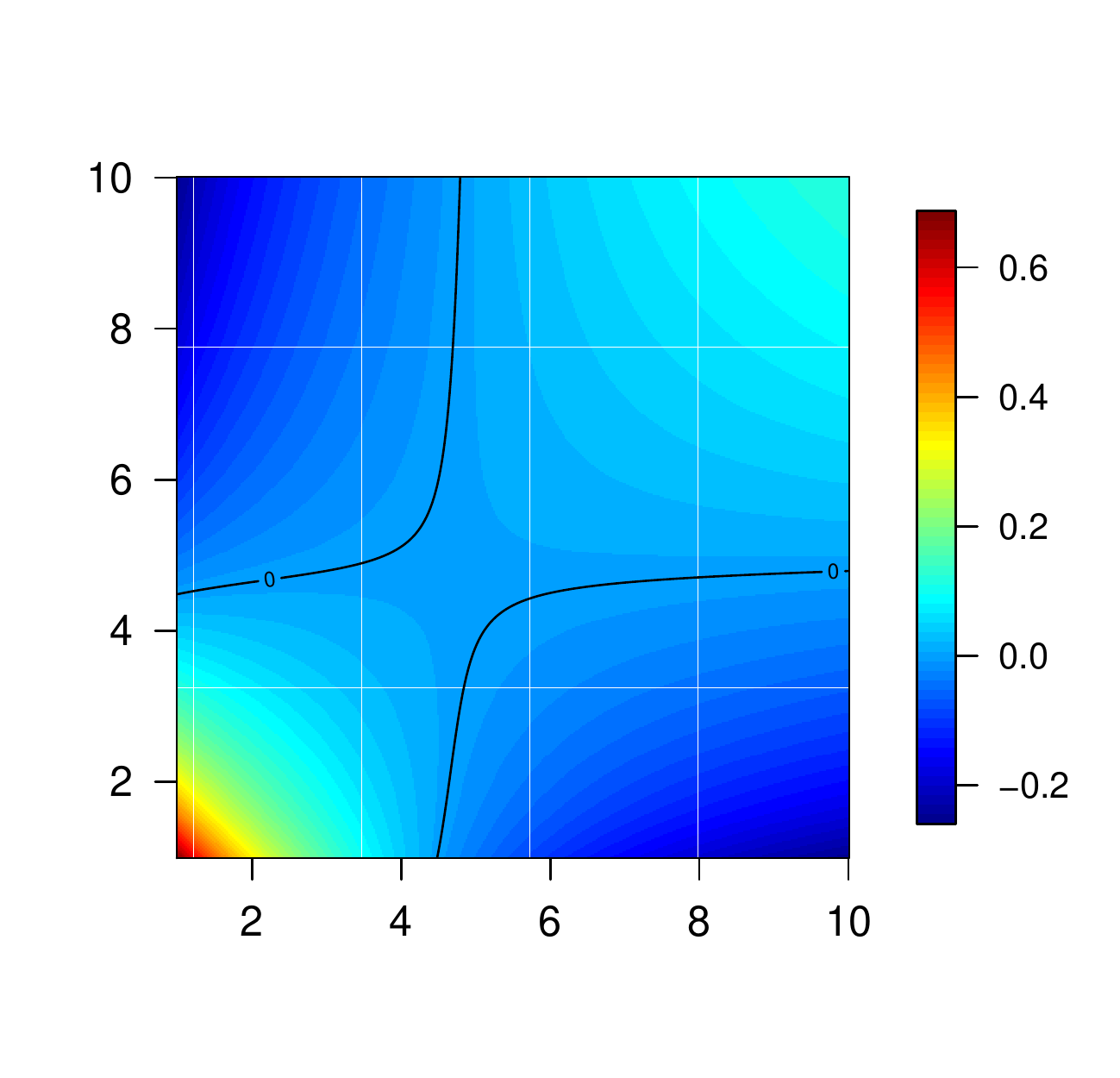}
    \caption{Covariance functions of log-normal $\lambda$-densities (left) and log-normal $\mathsf{P}_0$-densities (right). To appreciate the similarity between covariance structures, colors are \emph{not} given on the same scale.}
		\label{fig:Cov-log-norm}
\end{figure}

For the same log-normal densities, an exponential reference measure $\mathsf{P}^\delta$ was also considered, setting their densities to $p^{\delta}(t)=_{\mathcal{B}(\lambda)} \exp{\left\{-\delta \cdot t\right\}}, t \in I$, with $\delta$ in $\{0.25, 0.75, 1.25\}$.  Note that, for increasing values of $\delta$, the reference gives increasing weight to the left-hand side of the domain $I$. In order to obtain comparable results in terms of scales, the reference measures were all considered as normalized to unity. Figure \ref{fig:Log-normal-exp123} depicts the resulting log-normal densities w.r.t. $\mathsf{P}$,
\begin{equation*} \label{}
f_{\mathsf{P}}(t; \mu_i, \sigma_j) =_{\mathcal{B}(\mathsf{P})} \frac{1}{t}\cdot \exp \left\{-\frac{\ln t - \mu_i}{2\sigma_j^2} + \delta \cdot t \right\}, \quad t \in I,
\end{equation*}
as well as their counterparts in $L^2_0(\mathsf{P})$ and $L^2_{0,\sqrt{\mathsf{P}}}(\lambda)$. As expected, by down-weighting the right-hand side of the domain (i.e. increasing $\delta$), the variability in the tails on the right is eventually completely masked, whereas the opposite trend can be observed in the tails on the left. This is apparent when comparing the log-normal densities (Figure \ref{fig:log-norm-exp123-clr2}) and the corresponding covariance functions (Figures \ref{fig:Cov-log-norm} and \ref{fig:Log-normal-cov}). A further example on densities whose major source of (relative) variability is in the right-hand side of the domain is reported in the Supplementary Material, where truncated Weibull densities are considered.

\begin{figure}
    \centering
		\begin{subfigure}[b]{1\textwidth}
        \centering
        \includegraphics[width=0.31\textwidth]{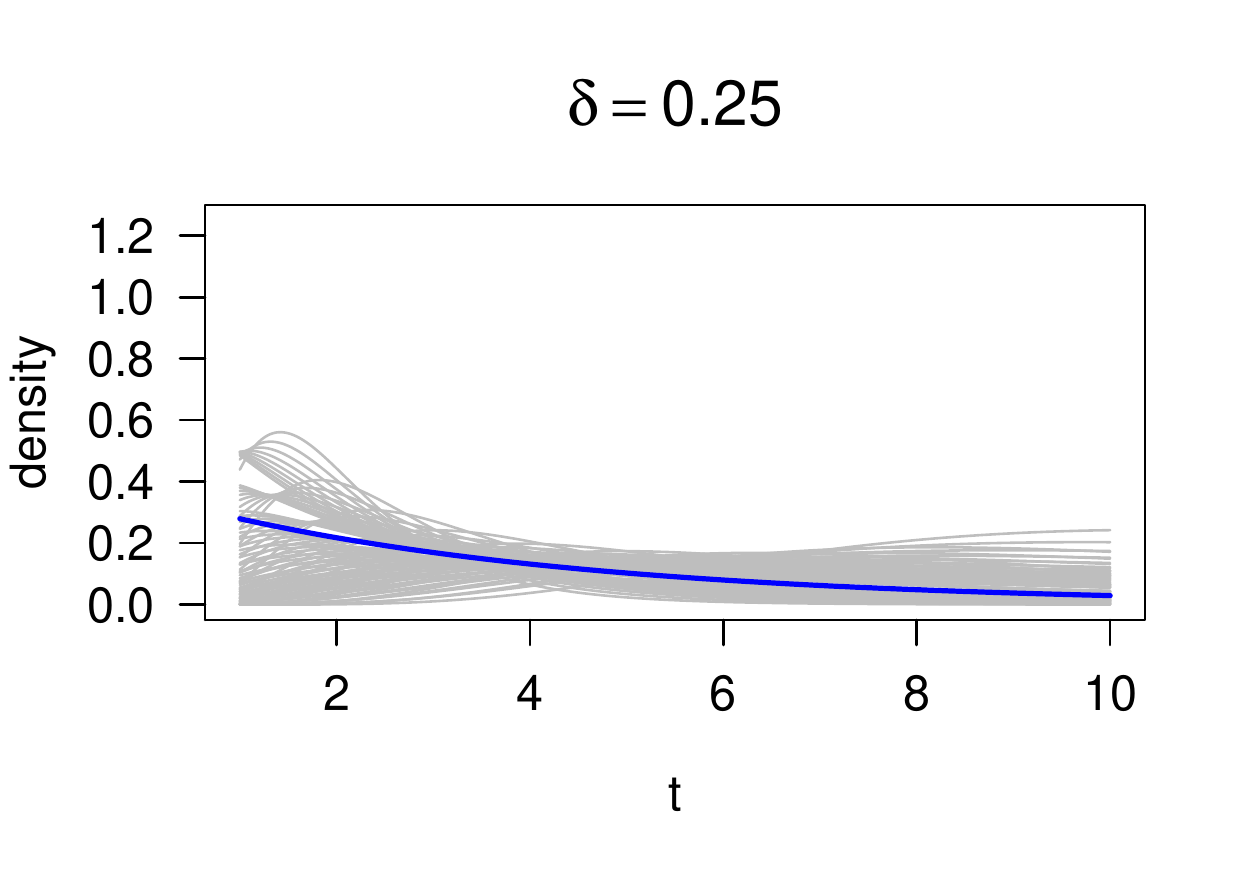}\includegraphics[width=0.31\textwidth]{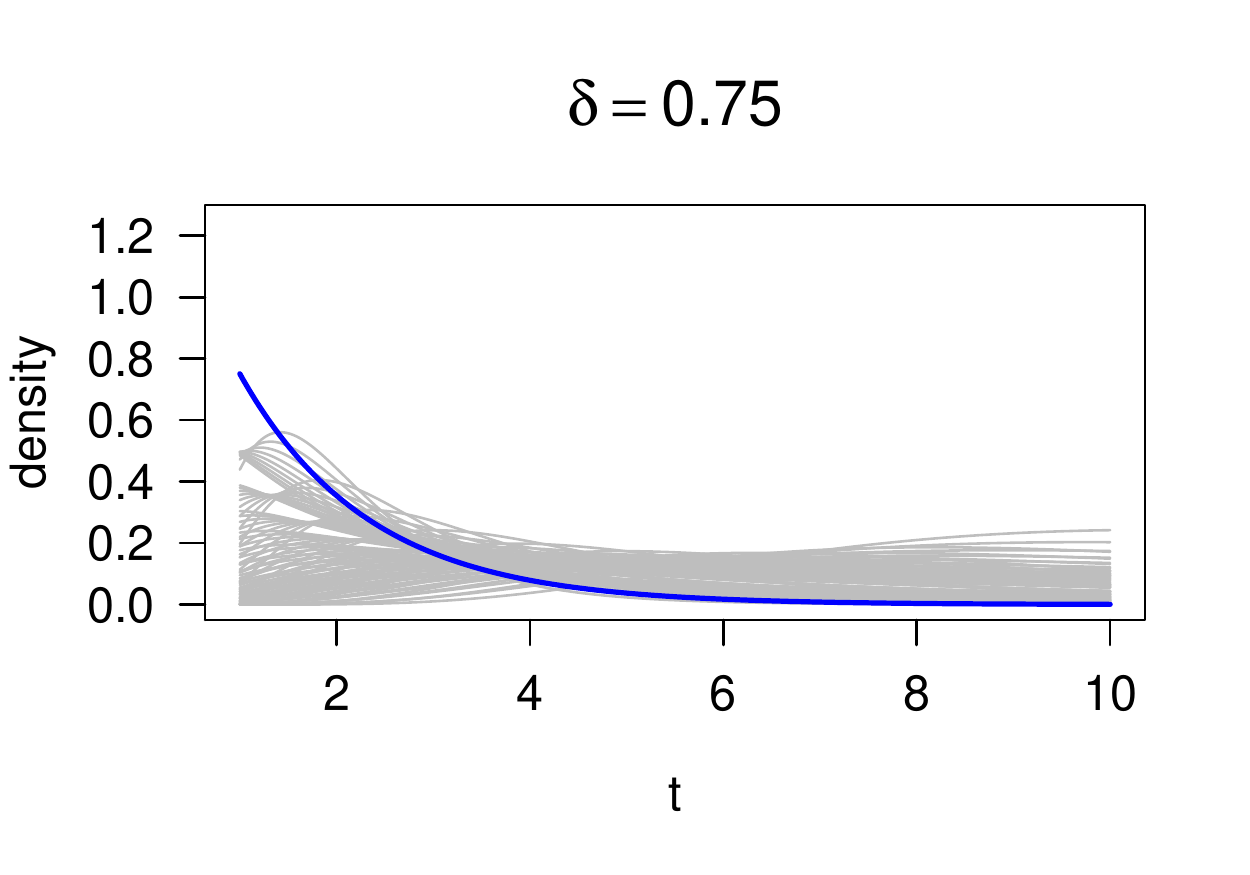}\includegraphics[width=0.31\textwidth]{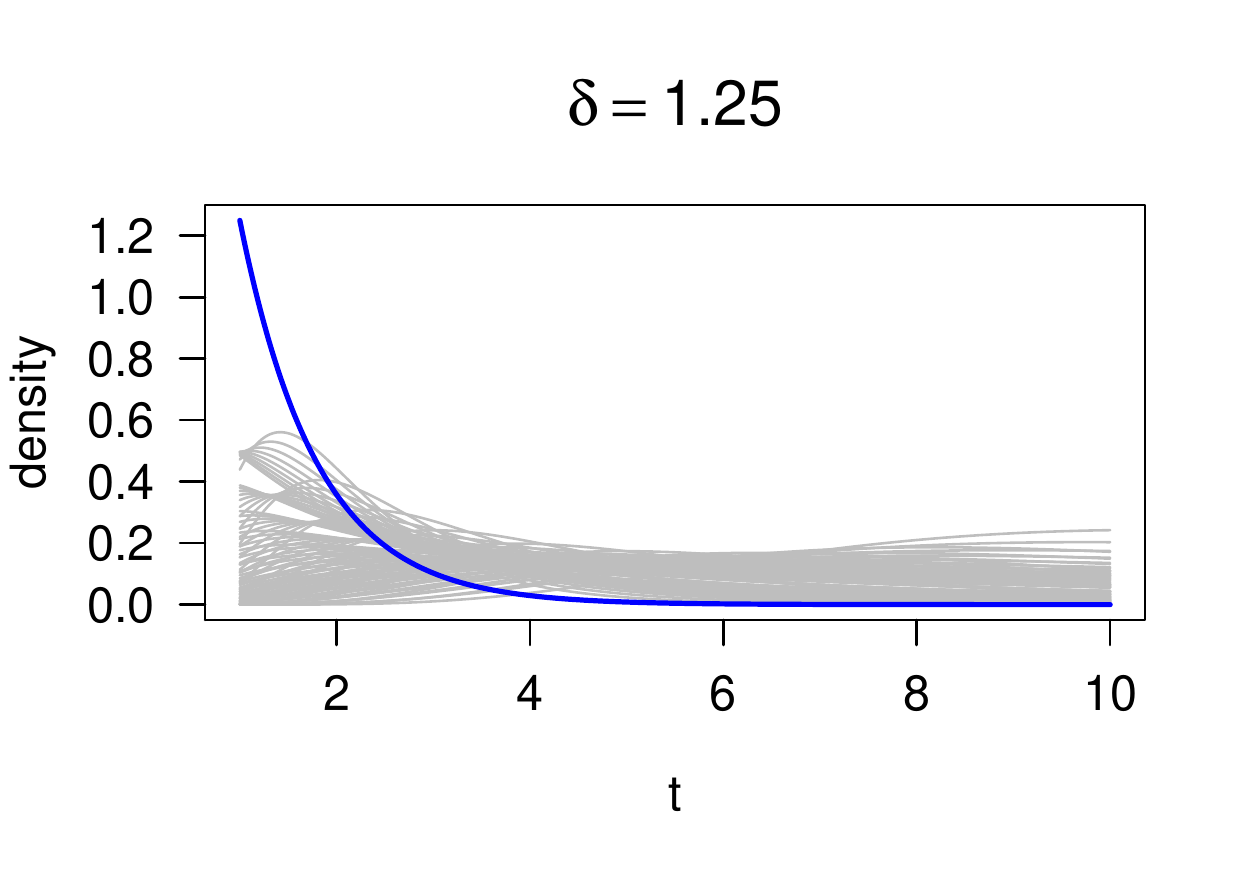}
        \caption{$\lambda$-density functions on $\mathcal{B}^2(\lambda)$ together with the exponential reference densities $\mathsf{P}^\delta$(blue curves).}
		\end{subfigure}
    \begin{subfigure}[b]{1\textwidth}
        \centering
        \includegraphics[width=0.31\textwidth]{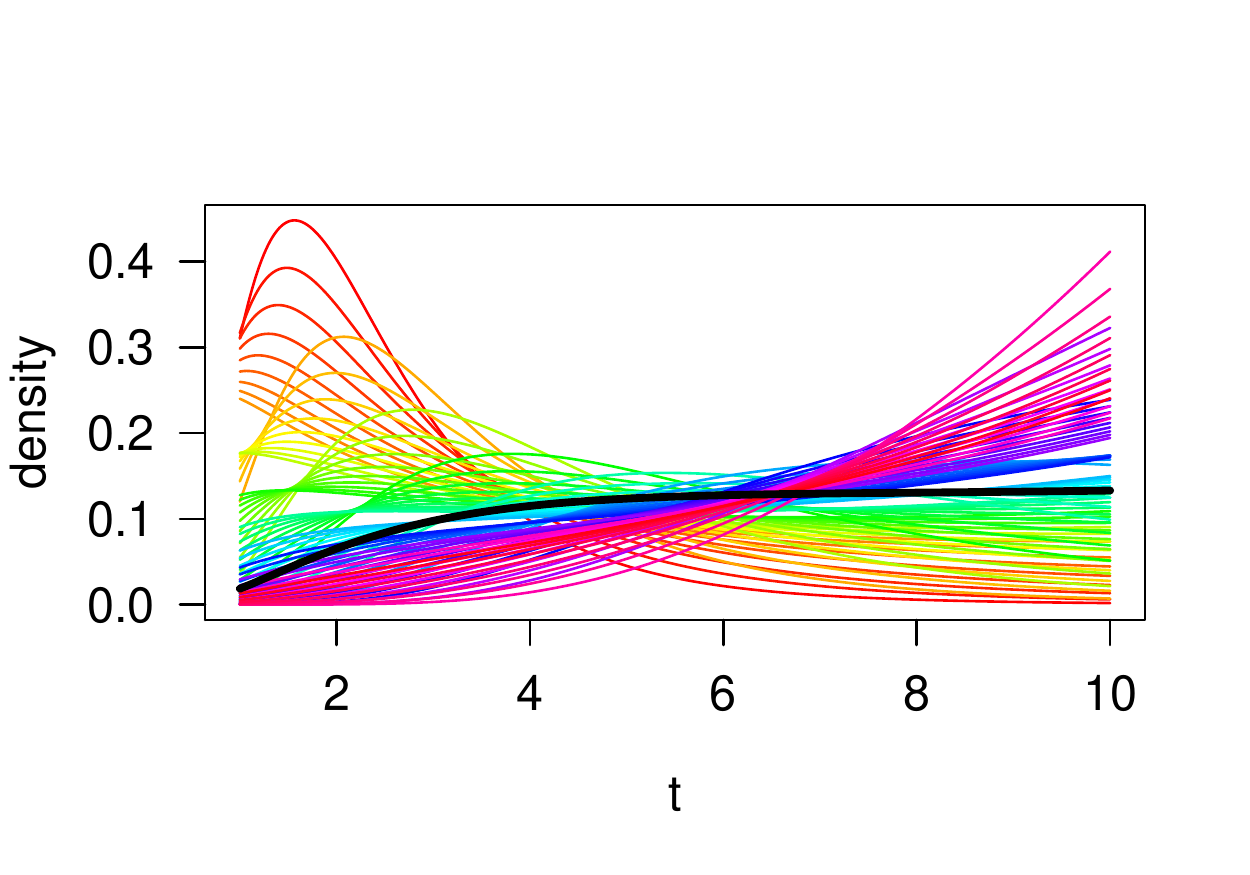}\includegraphics[width=0.31\textwidth]{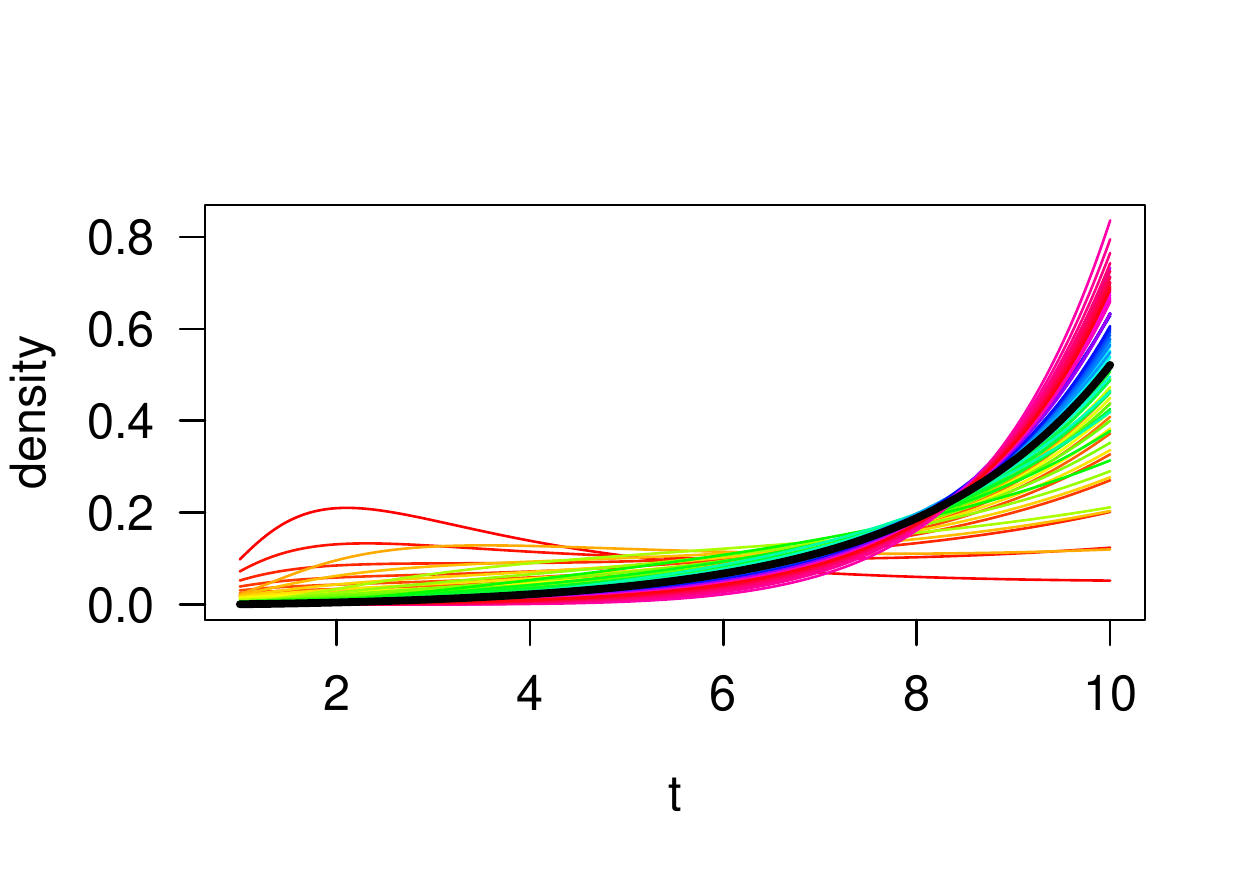}\includegraphics[width=0.31\textwidth]{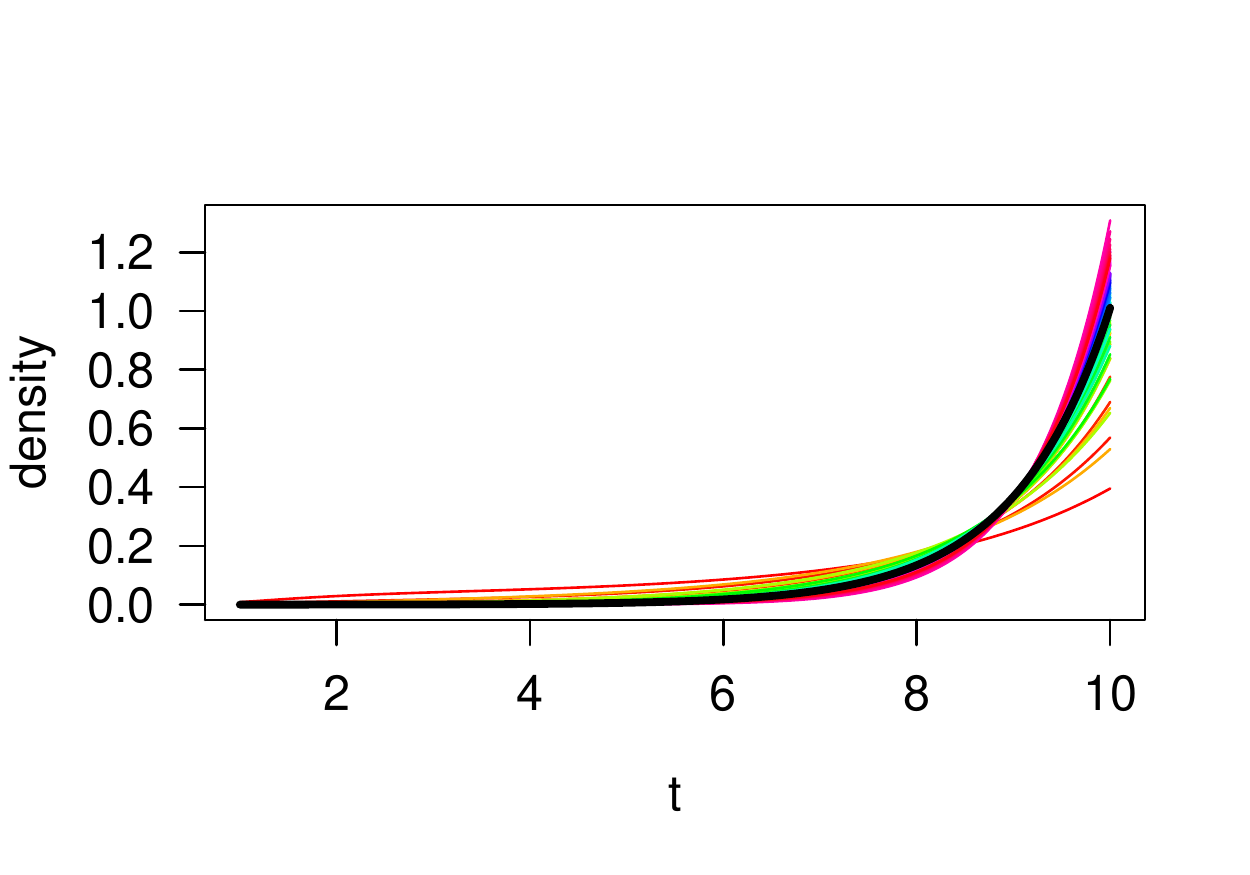}
        \caption{$\mathsf{P}$-density functions on $\mathcal{B}^2({\mathsf{P}})$ ($f_{\mathsf P, ij}$), for the exponential reference densities $\mathsf{P}=\mathsf{P}^\delta$.}
    \end{subfigure}
		    \begin{subfigure}[b]{1\textwidth}
        \centering
        \includegraphics[width=0.31\textwidth]{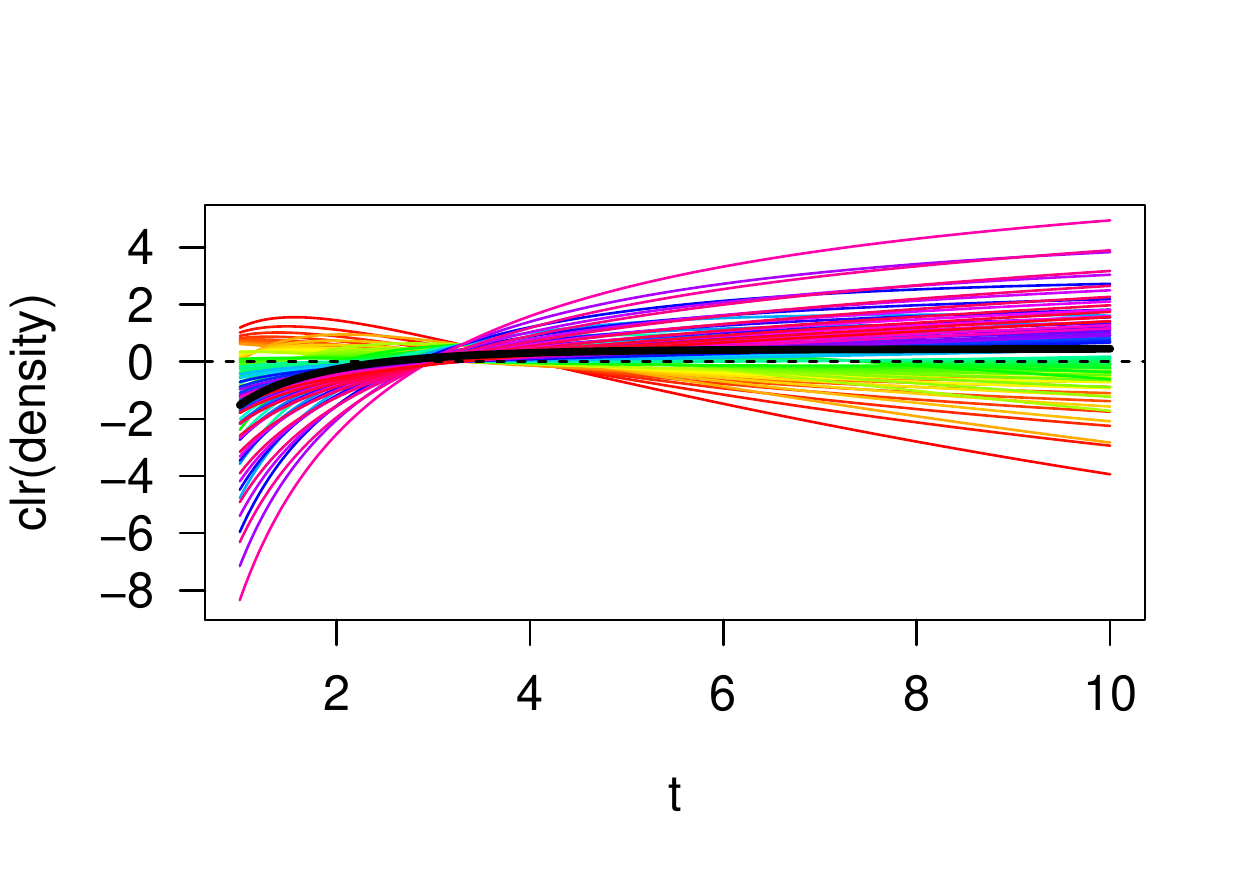}\includegraphics[width=0.31\textwidth]{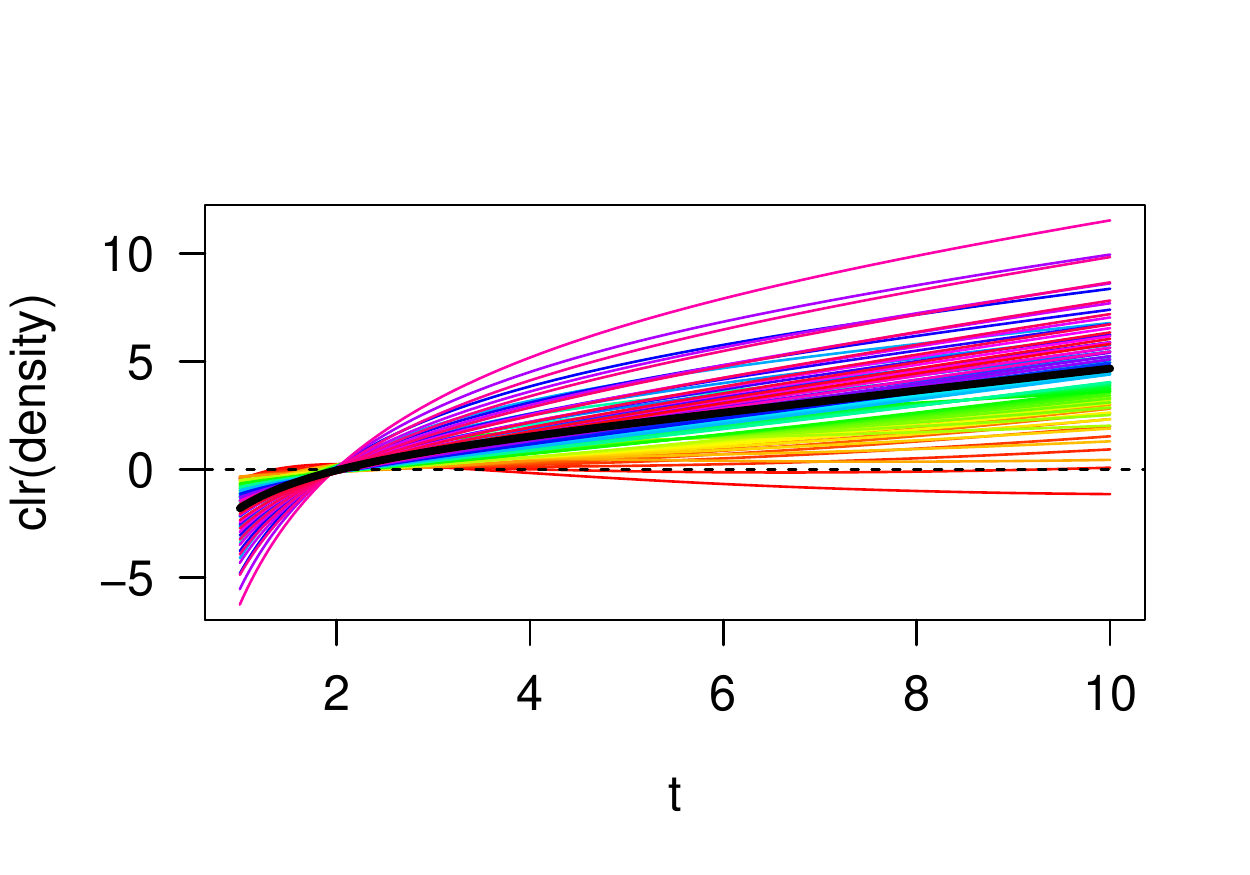}\includegraphics[width=0.31\textwidth]{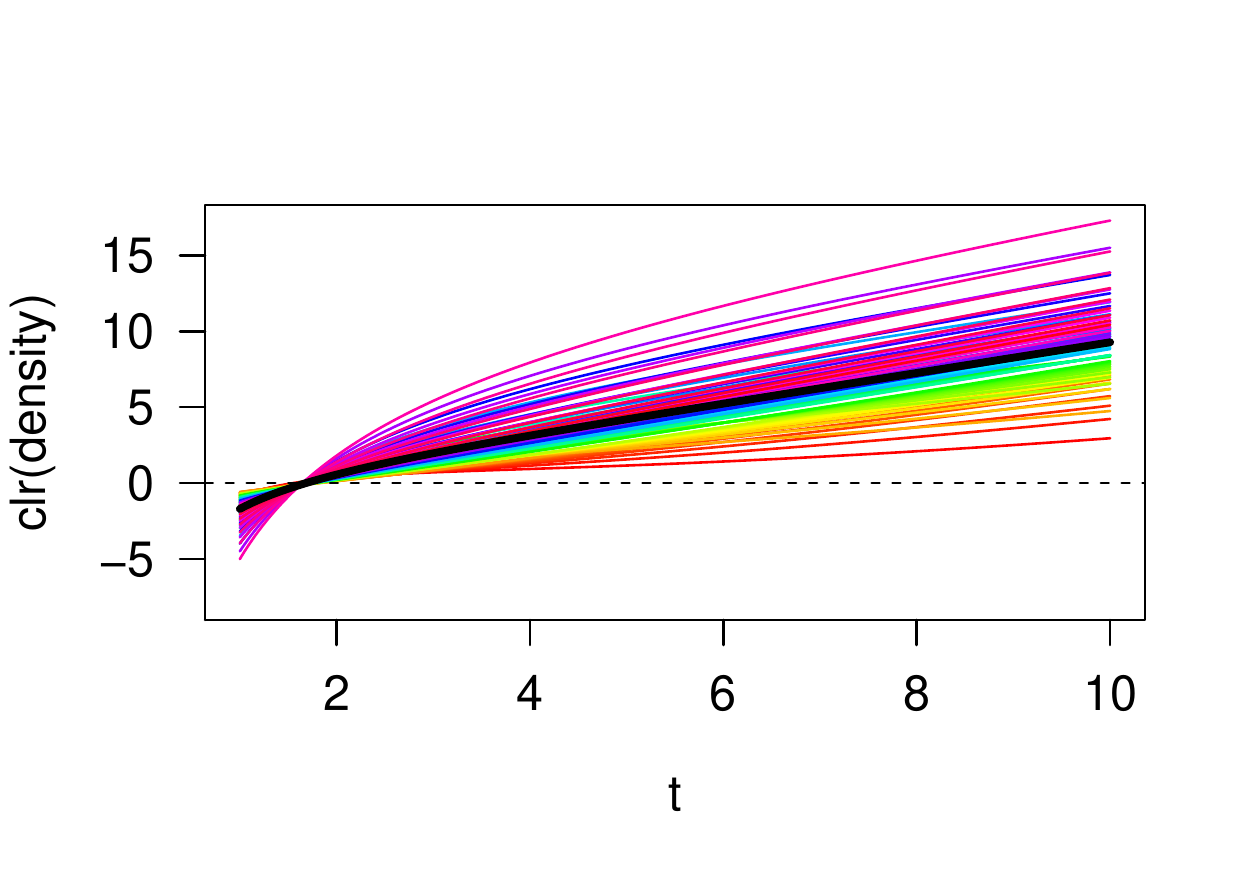}
        \caption{$\clr_{\mathsf{P}}$ transformation of the $\mathsf{P}$-density functions on $L^2({\mathsf{P}})$ ($\clr_{\mathsf{P}}(f_{\mathsf P, ij})$), for $\mathsf{P}=\mathsf{P}^\delta$. }
		\end{subfigure}
		    \begin{subfigure}[b]{1\textwidth}
        \centering
        \includegraphics[width=0.31\textwidth]{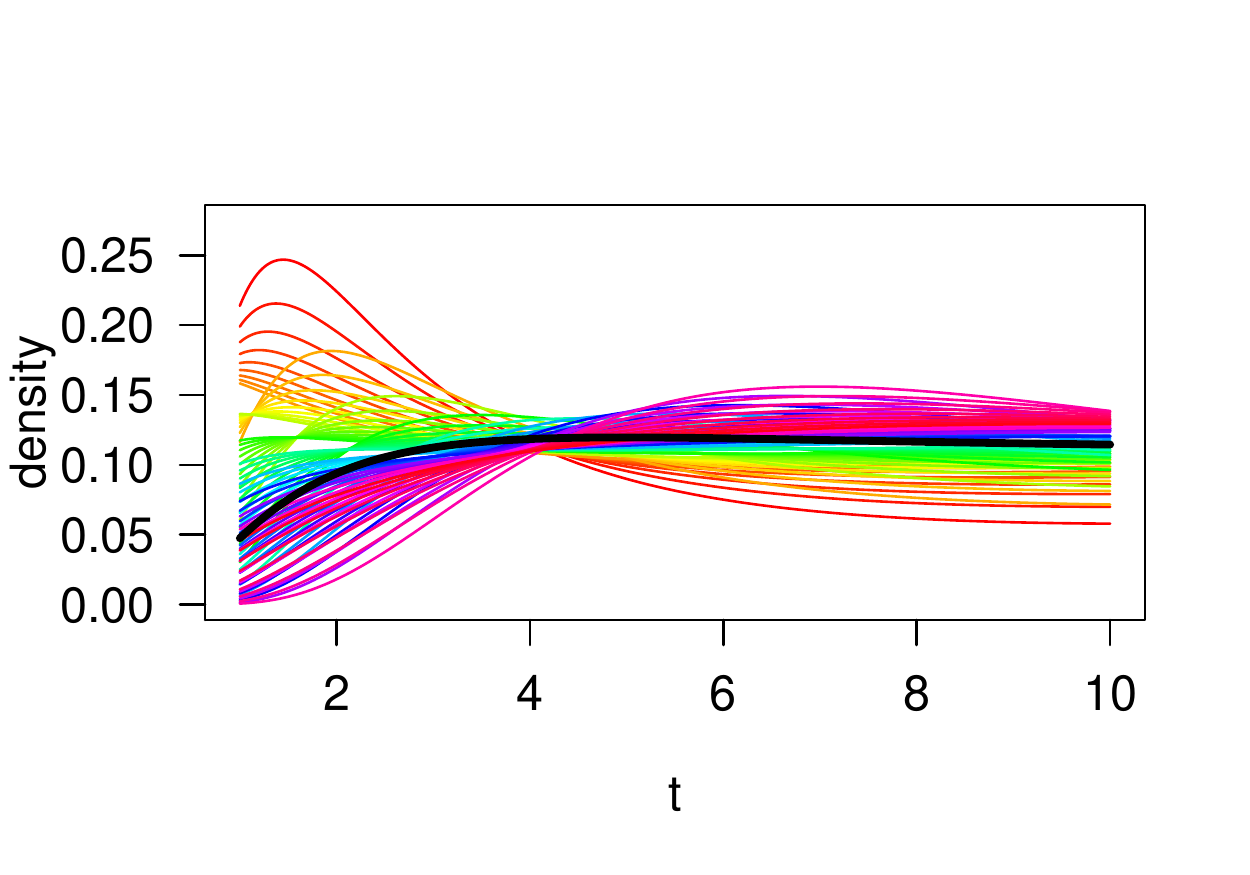}\includegraphics[width=0.31\textwidth]{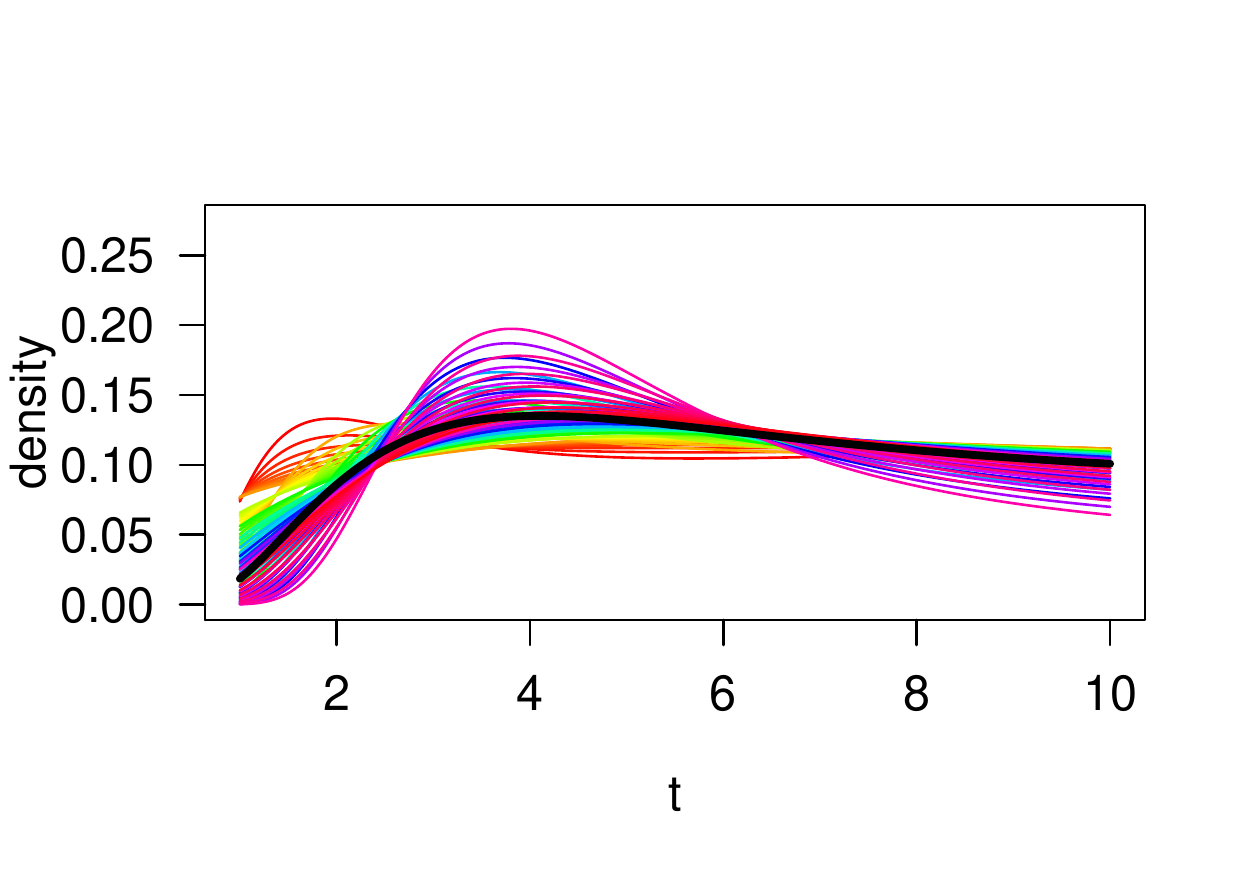}\includegraphics[width=0.31\textwidth]{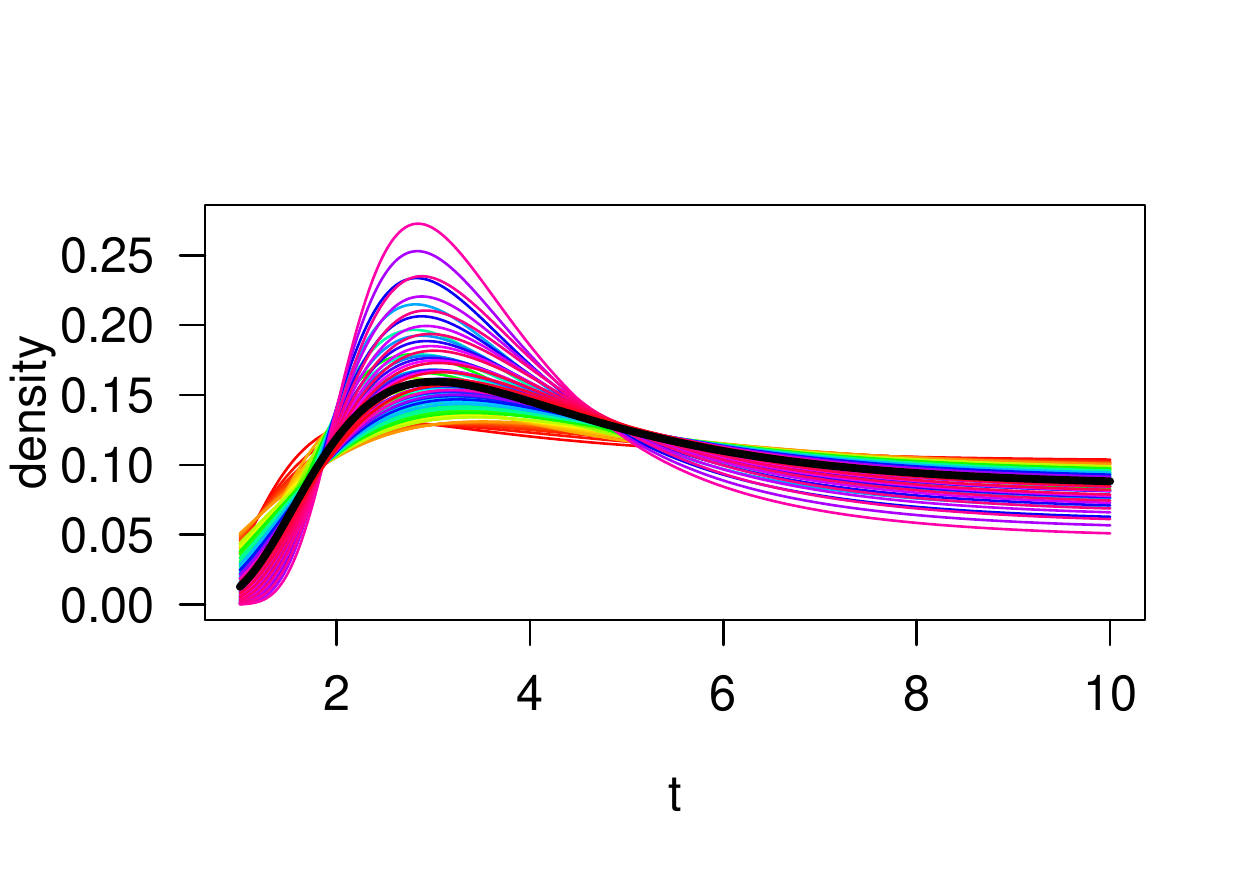}
				\caption{$\mathcal{B}^2$-unweighted version of $\mathsf{P}$-density functions on $\mathcal{B}^2(\lambda)$ (obtained as $\omega^{-1}(f_{\mathsf P, ij})$), for $\mathsf{P}=\mathsf{P}^\delta$.}
				\label{fig:log-norm-exp123-orig}
		\end{subfigure}
		    \begin{subfigure}[b]{1\textwidth}
        \centering
        \includegraphics[width=0.31\textwidth]{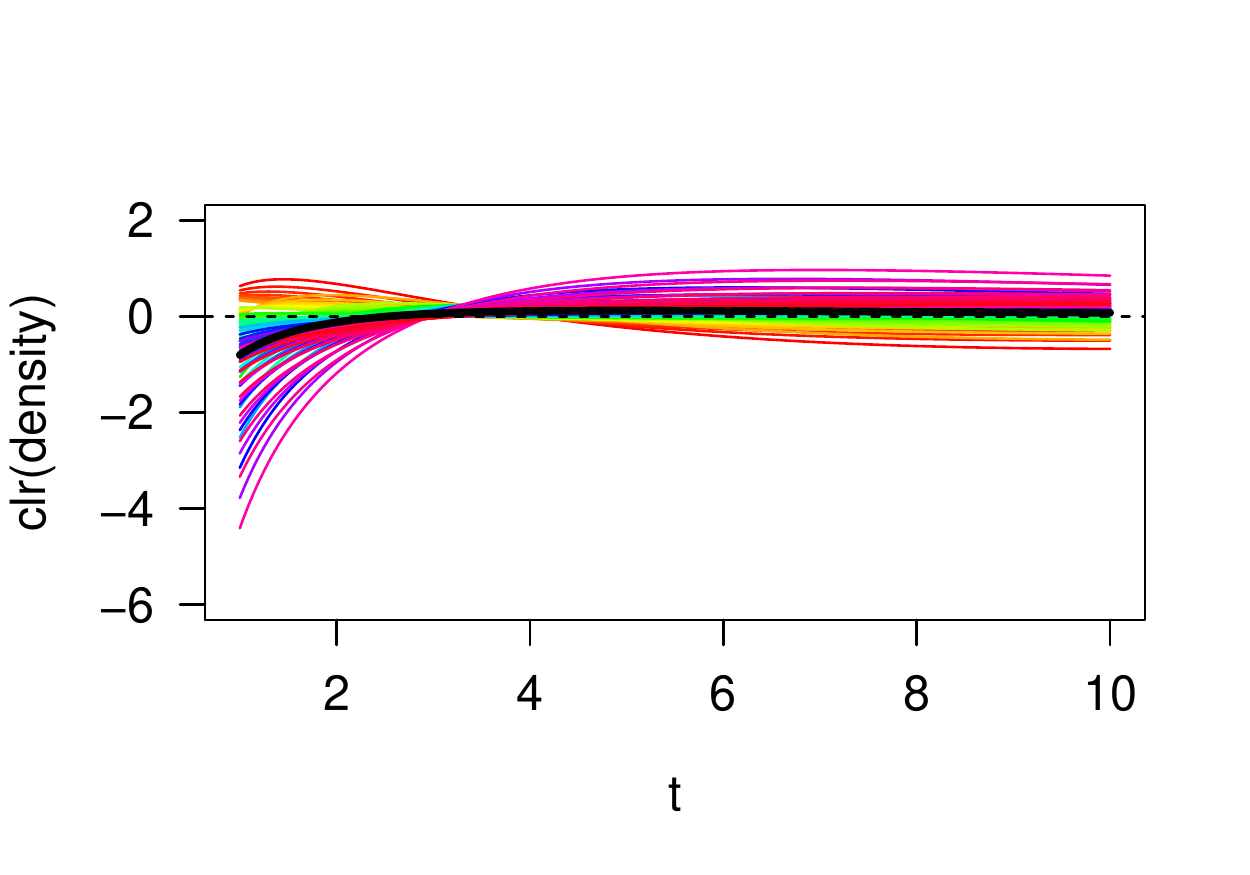}\includegraphics[width=0.31\textwidth]{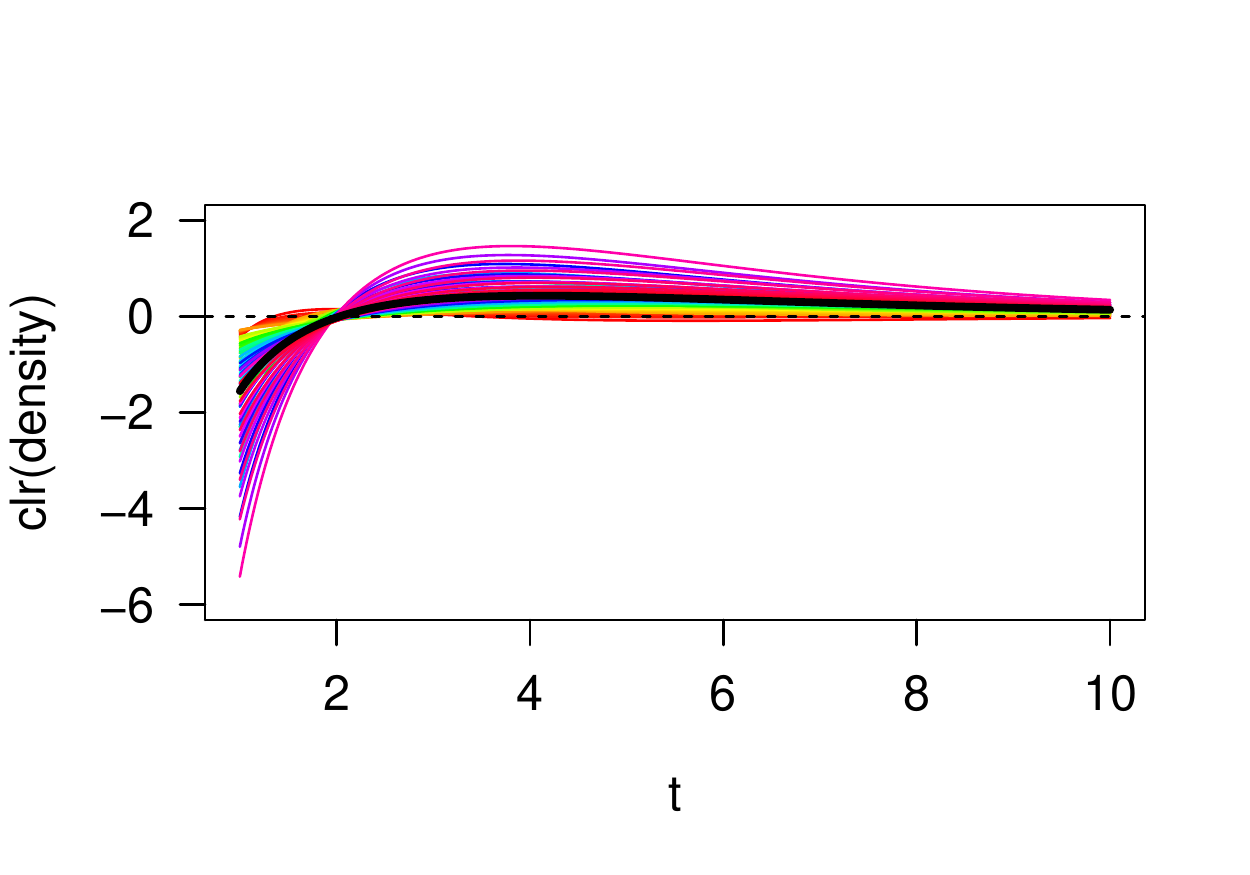}\includegraphics[width=0.31\textwidth]{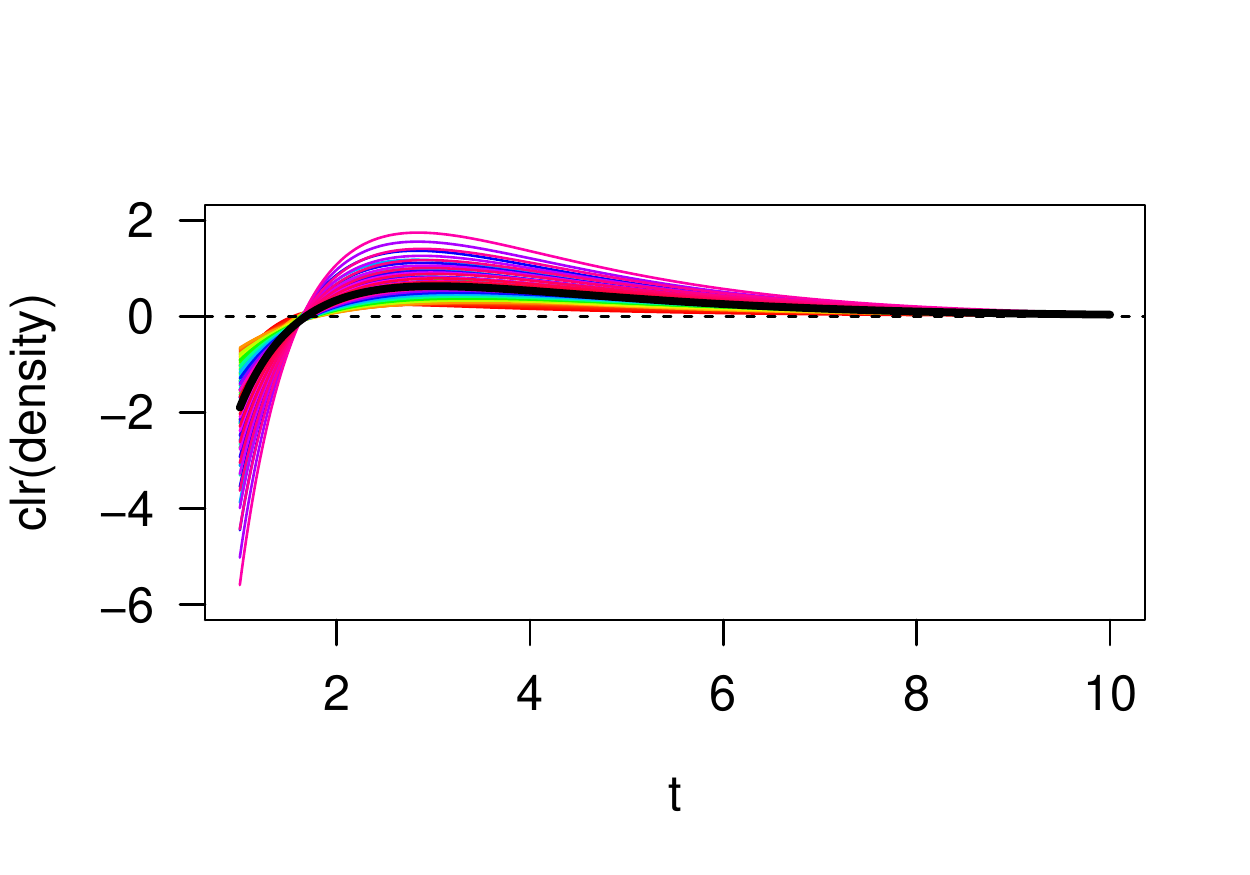}
        \caption{$\clr_u$ transformation of $\mathsf{P}$-density functions in $L^2_{0,\sqrt{\mathsf{P}}}(\lambda)$ (obtained as $\clr_{u}(f_{\mathsf P, ij})$), for $\mathsf{P}=\mathsf{P}^\delta$.}
				\label{fig:log-norm-exp123-clr2}
		\end{subfigure}
    \caption{Log-normal density functions with respect to exponential reference measures with $\delta = 0.25$ (first column), $\delta = 0.75$ (second column) and $\delta = 1.25$ (third column) for parameters $\mu_i = 0.6+0.25\cdot(i-1)$ and $\sigma_j = 0.5+0.07\cdot(j-1)$ for $i, j=1, \ldots,9$ on $I=\left[1,10\right]$.}
		\label{fig:Log-normal-exp123}
\end{figure}

\begin{figure}
    \centering
		    \begin{subfigure}[b]{0.32\textwidth}
        \centering
        \includegraphics[width=\textwidth]{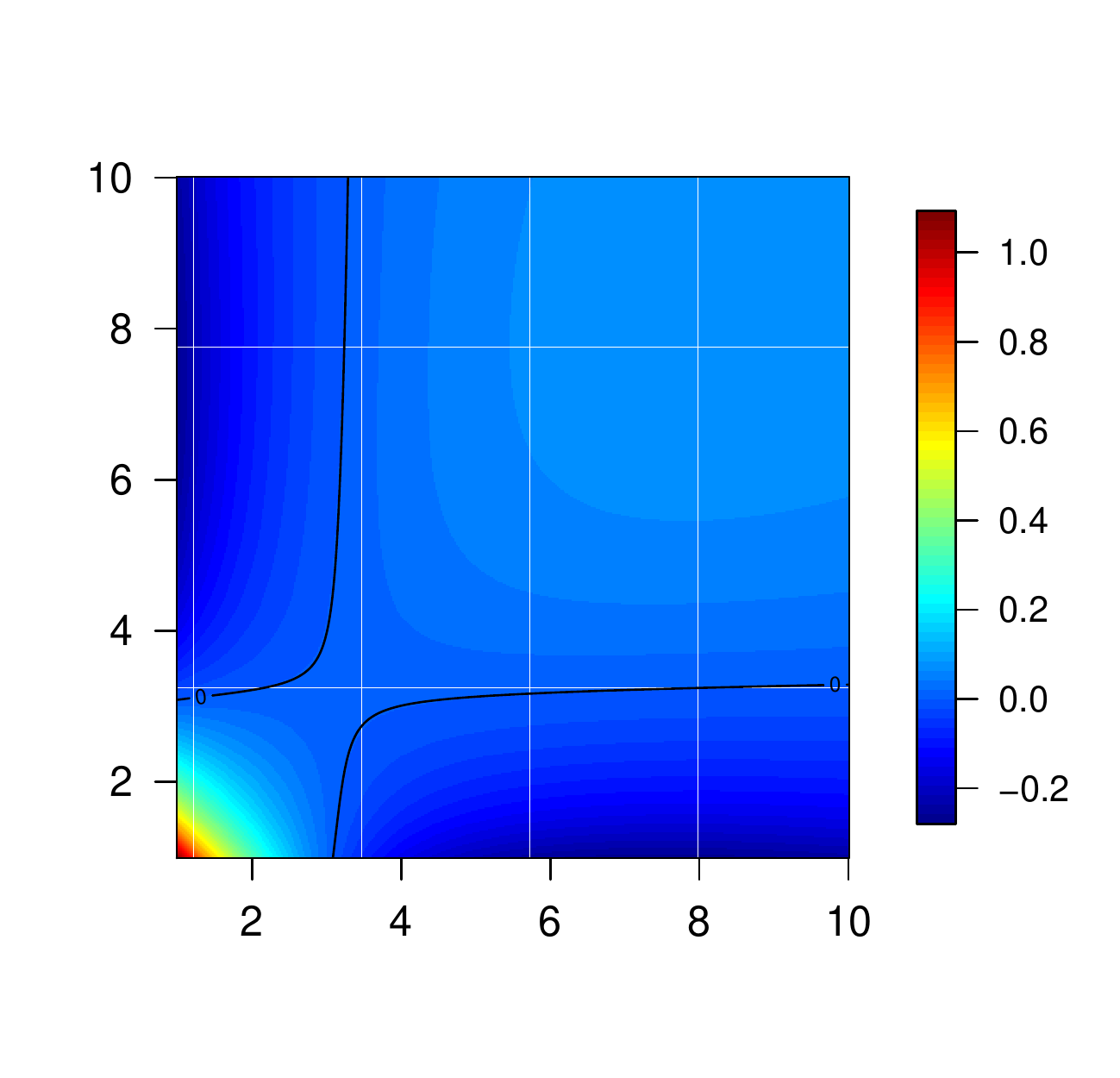}
        \caption{$\mathsf{P}^\delta$, $\delta = 0.25$}
    \end{subfigure}
		    \begin{subfigure}[b]{0.32\textwidth}
        \centering
        \includegraphics[width=\textwidth]{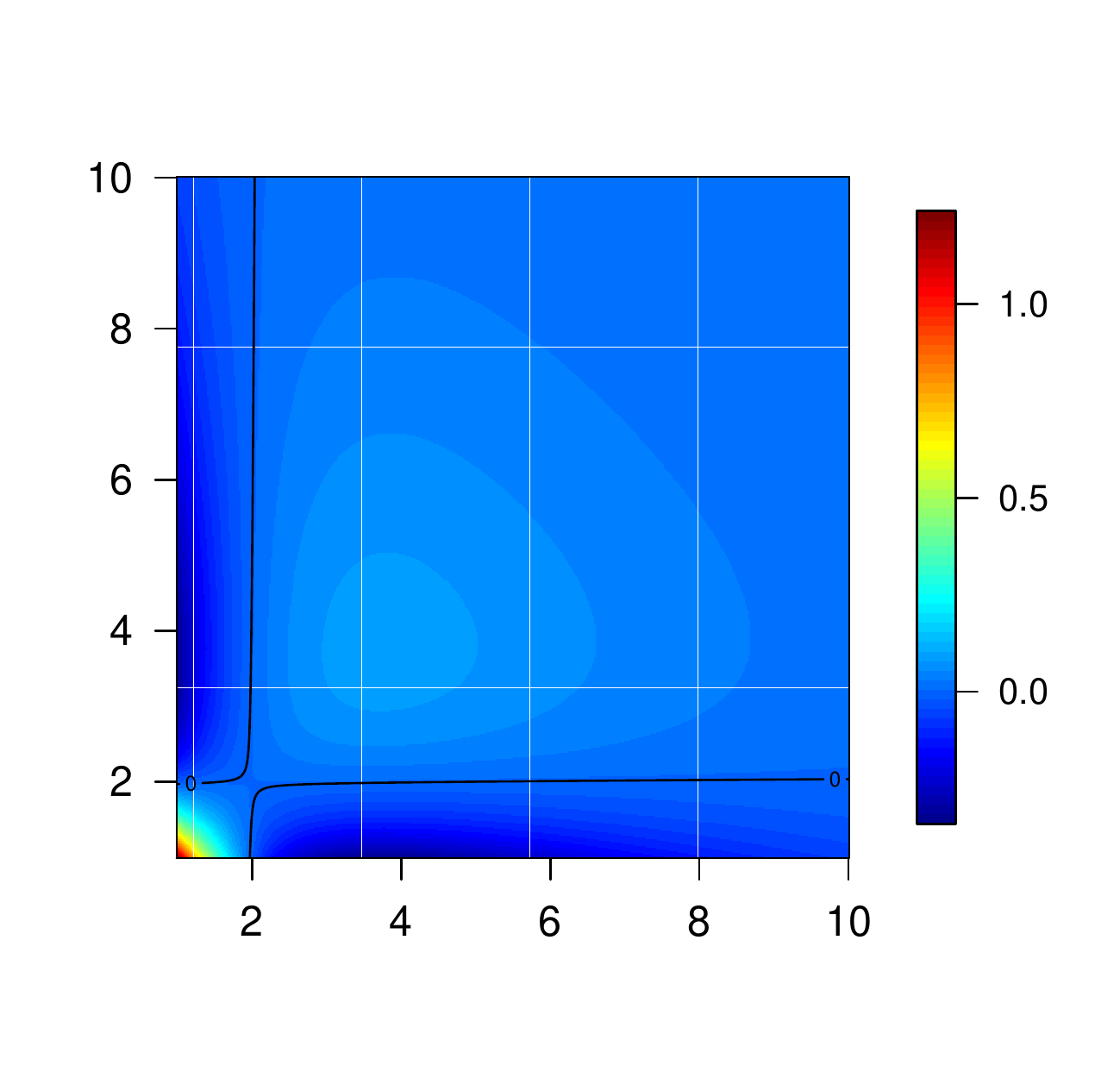}
        \caption{$\mathsf{P}^\delta$, $\delta = 0.75$}
		\end{subfigure}
		    \begin{subfigure}[b]{0.32\textwidth}
        \centering
        \includegraphics[width=\textwidth]{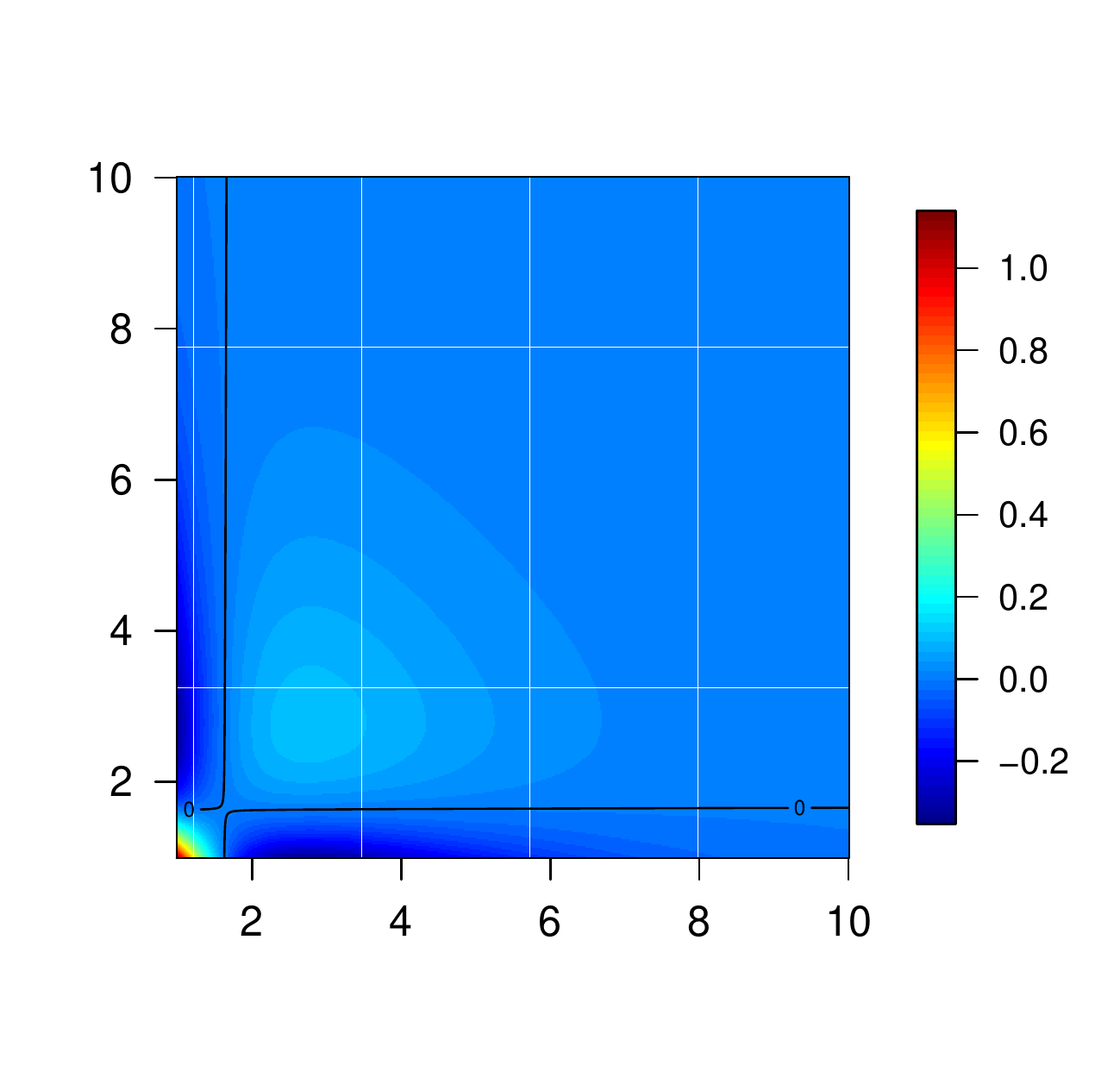}
        \caption{$\mathsf{P}^\delta$, $\delta = 1.25$}
		\end{subfigure}
    \caption{Comparison of covariance functions for log-normal densities w.r.t. the exponential reference measure for different values of parameter $\delta$. To appreciate the patterns of the covariance structures, colors are \emph{not} given on the same scale.
}
		\label{fig:Log-normal-cov}
\end{figure}

\subsection{Change of the reference measure: consequences on SFPCA}
In this subsection, the effect of changing the reference measure is further analyzed in the context of simplicial principal component analysis (SFPCA, \cite{hron16}). The same set of log-normal densities used in Subsection \ref{subsec:sim_log_wei} is considered, by setting the reference measure to either uniform or exponential distribution.

Both datasets considered in Section \ref{subsec:sim_log_wei} belong to a 2-parametric exponential family which forms an affine subspace of the Bayes space whose dimension is precisely the number of parameters \cite{boogaart10}. This feature was highlighted in \cite{hron16} for the case of a Lebesgue reference measure. Accordingly, the original spaces can be reconstructed (without lost of information) by the first two SFPCs  (SFPC$_1$ and SFPC$_2$), forming an orthonormal basis of the corresponding affine subspace. One may expect that changing the reference measure for densities in the exponential family will have an impact on the wSFPCA while preserving the data dimensionality. We also note that the results of wSFPCA under a uniform reference measure are expected to be just a rescaling of those that would be obtained with the SFPCA of \cite{hron16}.

Figures \ref{fig:sfpca-lognor-unif} and \ref{fig:sfpca-lognor-exp123} report the wSFPCA results on the log-normal densities when uniform and exponential reference measures are used respectively. As expected, the first two wSFPCs represent the total variability of the dataset in all cases.
\begin{figure}
    \centering
    \begin{subfigure}[t]{0.4\textwidth}
        \centering
        \includegraphics[width=\textwidth]{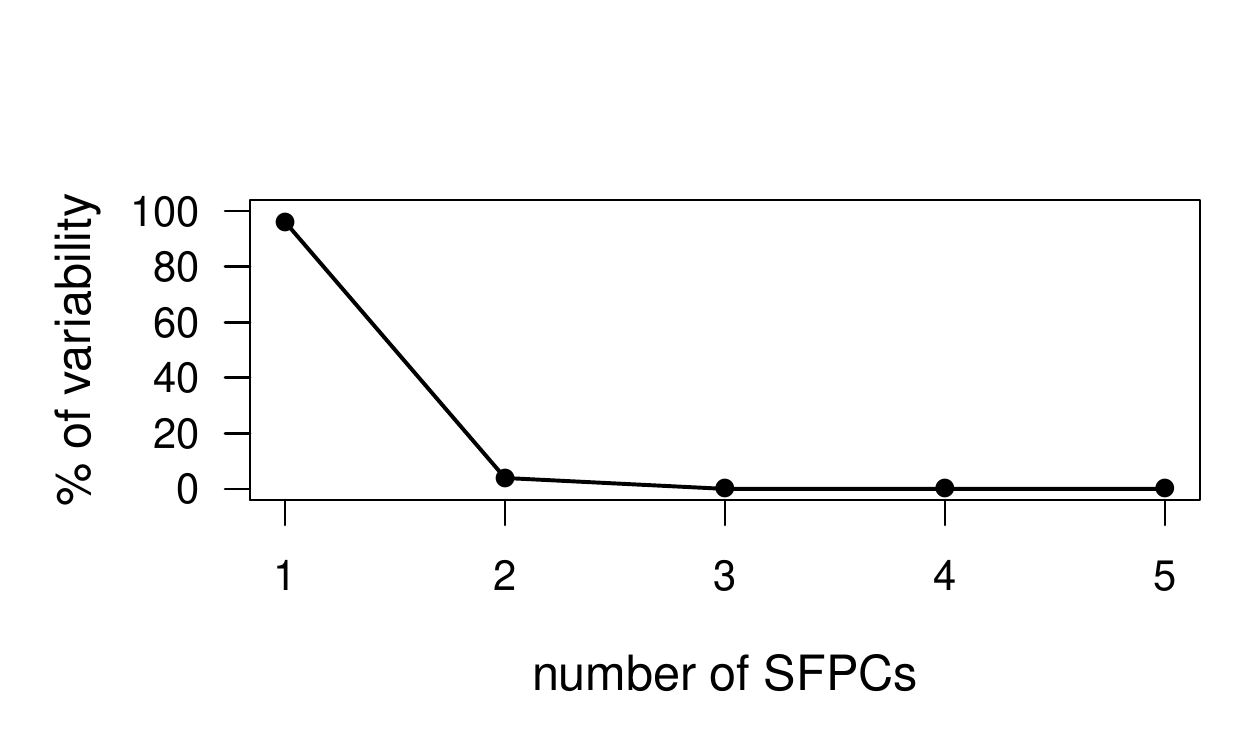}
				\begin{flushleft}
				\caption{Explained variability.}
				\end{flushleft}
    \end{subfigure}
	\begin{subfigure}[t]{0.4\textwidth}
        \centering
        \includegraphics[width=\textwidth]{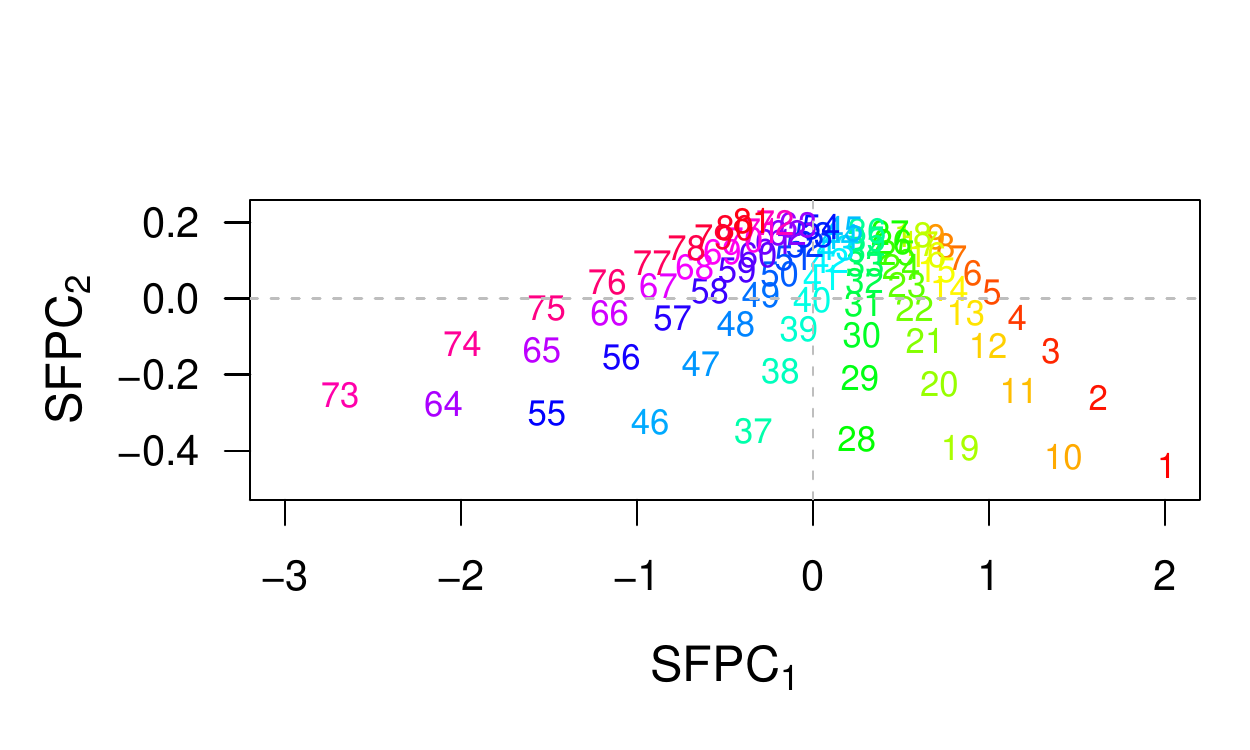}
				\begin{flushleft}
				\caption{Scores for SFPC$_1$ and SFPC$_2$.}
				\label{fig:sfpca-lognor-unif-scor}
				\end{flushleft}
	\end{subfigure}
    \centering
	\begin{subfigure}[t]{1\textwidth}
        \centering
        \includegraphics[width=0.4\textwidth]{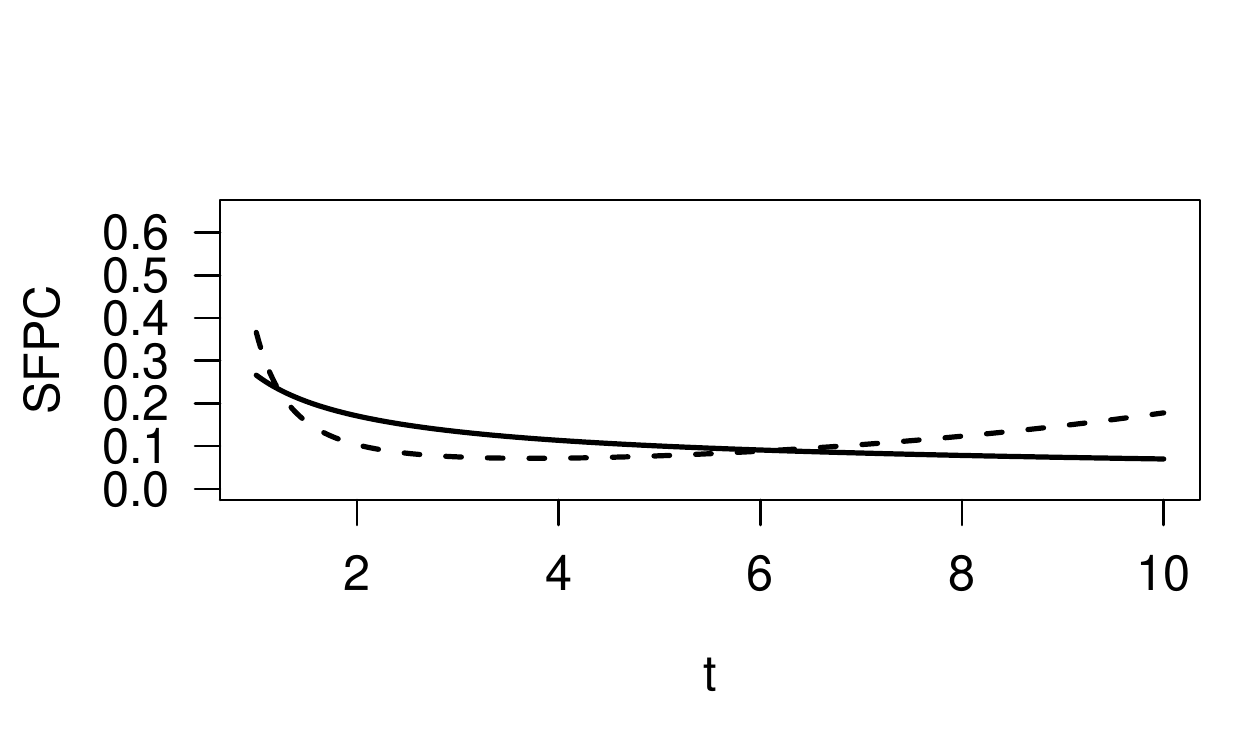}\includegraphics[width=0.4\textwidth]{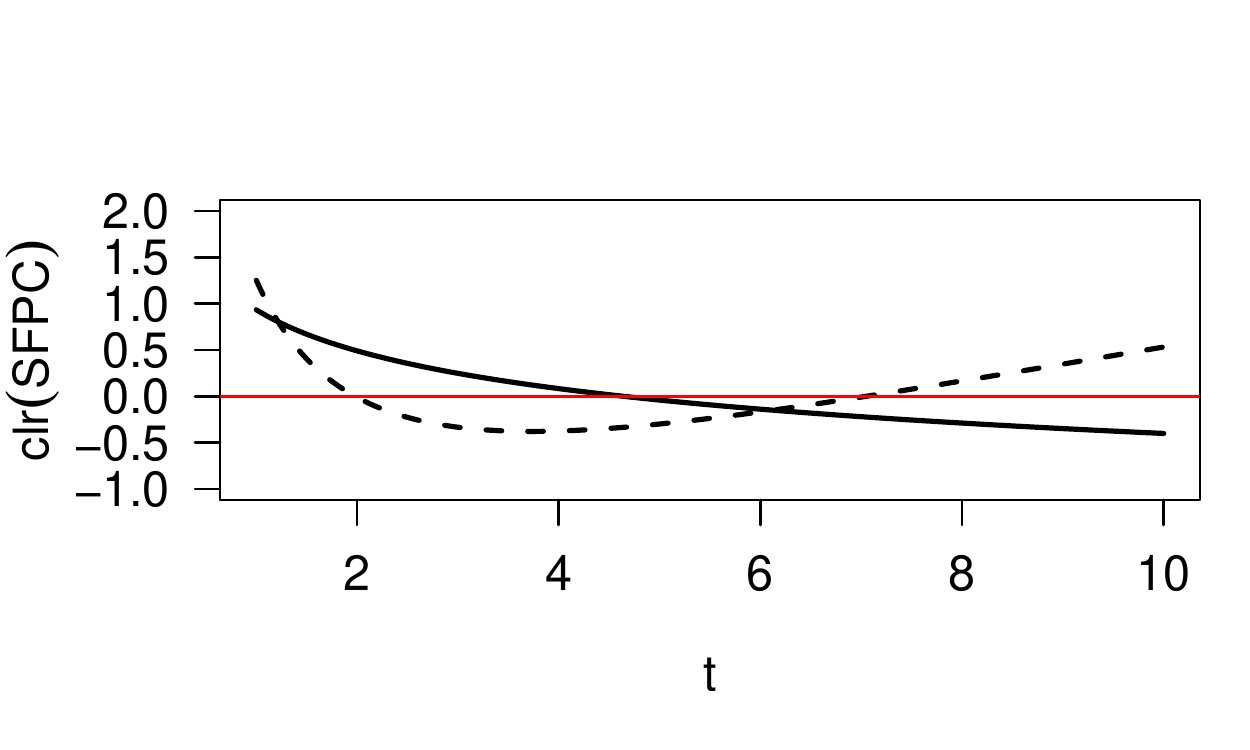}
        \caption{$\mathcal{B}^2$-unweighted SFPC$_1$ (solid line; 96.08$\%$) and SFPC$_2$ (dashed line; 3.92$\%$) (left) and their $\clr_{u}$ transformation (right).
				}
	\end{subfigure}
    \begin{subfigure}[t]{1\textwidth}
        \centering
        \includegraphics[width=0.4\textwidth]{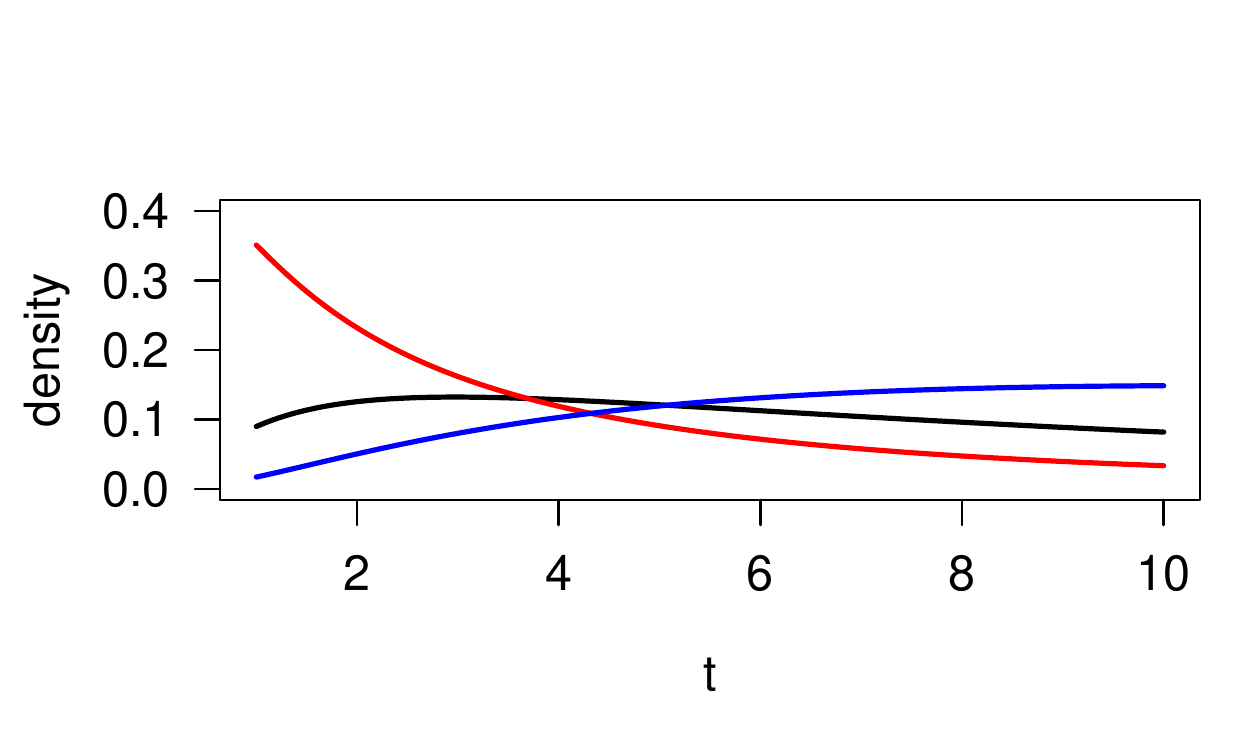}\includegraphics[width=0.4\textwidth]{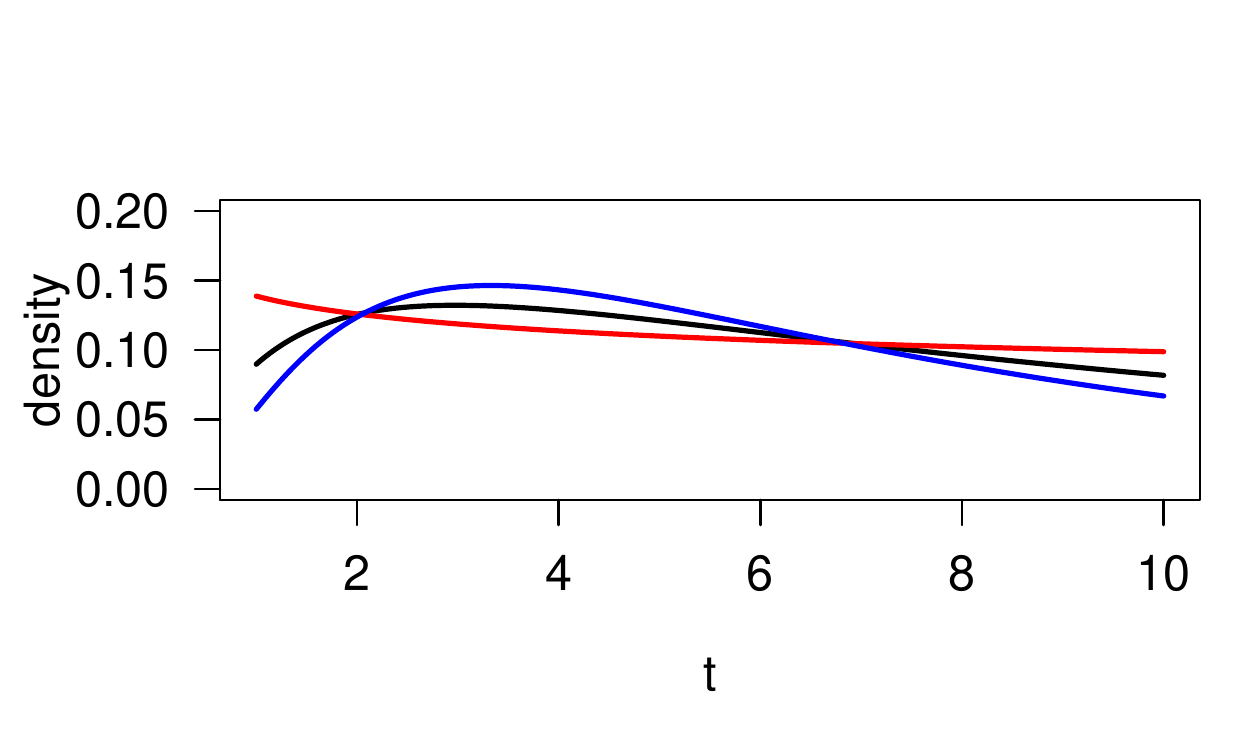}
        \caption{$\mathcal{B}^2$-unweighted version of $\bar{f}_{\mathsf{P}_0} \oplus_{\mathsf{P}_0} / \ominus_{\mathsf{P}_0} 2\sqrt{\rho_1} \odot_{\mathsf{P}_0} \text{SFPC}_{1}$ (left) and of $\bar{f}_{\mathsf{P}_0} \oplus_{\mathsf{P}_0} / \ominus_{\mathsf{P}_0} 2\sqrt{\rho_2} \odot_{\mathsf{P}_0} \text{SFPC}_{2}$ (right).
				}\label{fig:sfpc-log-norm-unif-harm12}
	\end{subfigure}
	\begin{subfigure}[t]{0.4\textwidth}
                \centering
        \includegraphics[width=\textwidth]{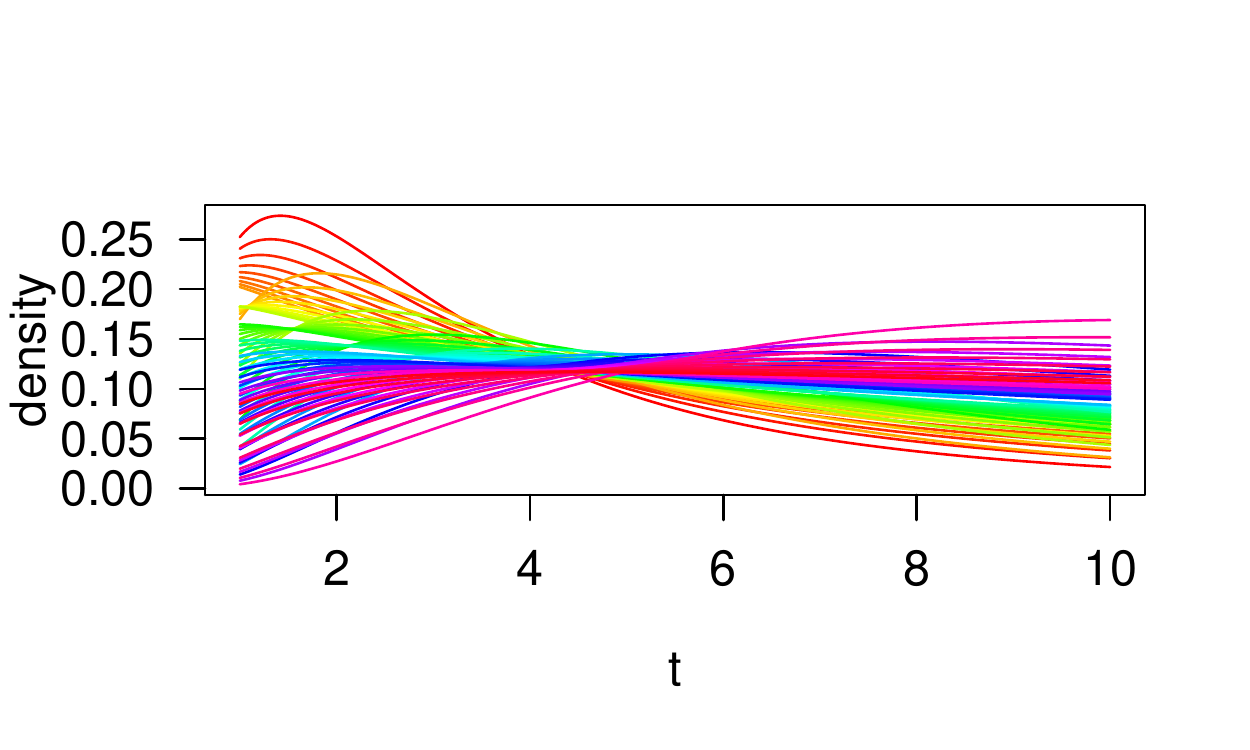} 
				\caption{$\mathcal{B}^2$-unweighted version of the $\mathsf{P}_0$-densities $f_{{\mathsf P}_0,ij}$.}
				\label{proj_unif_12-original}
		\end{subfigure}
		\begin{subfigure}[t]{0.4\textwidth}
                \centering
                \includegraphics[width=\textwidth]{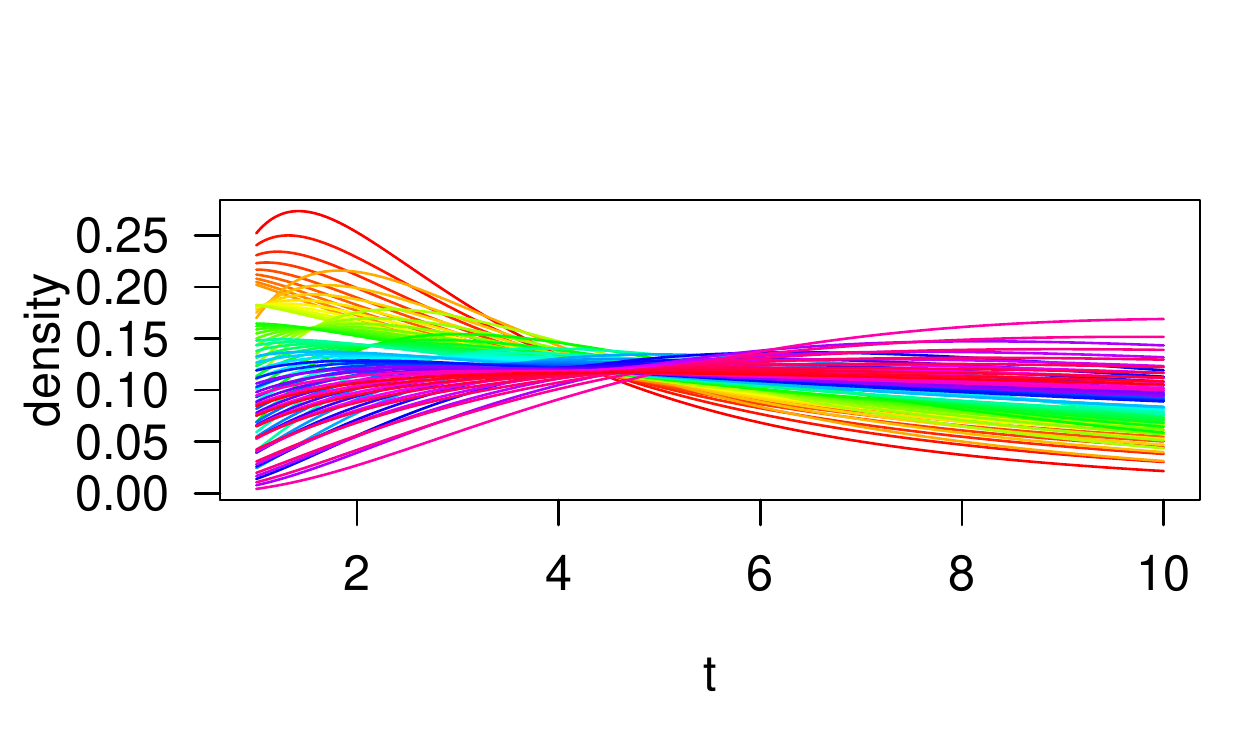}
				\caption{$\mathcal{B}^2$-unweighted version of the approximation of $f_{{\mathsf P}_0,ij}$ via SFPC$_1$ and SFPC$_2$.}
				\label{proj_unif_12}
		\end{subfigure}
    \caption{Results of SFPCA for simulated log-normal densities in the case of a uniform reference measure $\mathsf{P}_0$. Results in panels (c) to (e) are represented in the unweighted spaces $L^2_{0,\sqrt{\mathsf{P}_0}}(\lambda)$, $B^2(\lambda)$. By $\mathcal{B}^2$-unweighted version of $f\in \mathcal{B}^2({\mathsf P}_0)$ it is meant $\omega^{-1}(f_{{\mathsf P}_0}) \in \mathcal{B}^2(\lambda)$. }
		\label{fig:sfpca-lognor-unif}
\end{figure}
\begin{figure}
    \centering
	\begin{subfigure}[t]{1\textwidth}
        \centering
        \includegraphics[width=0.33\textwidth]{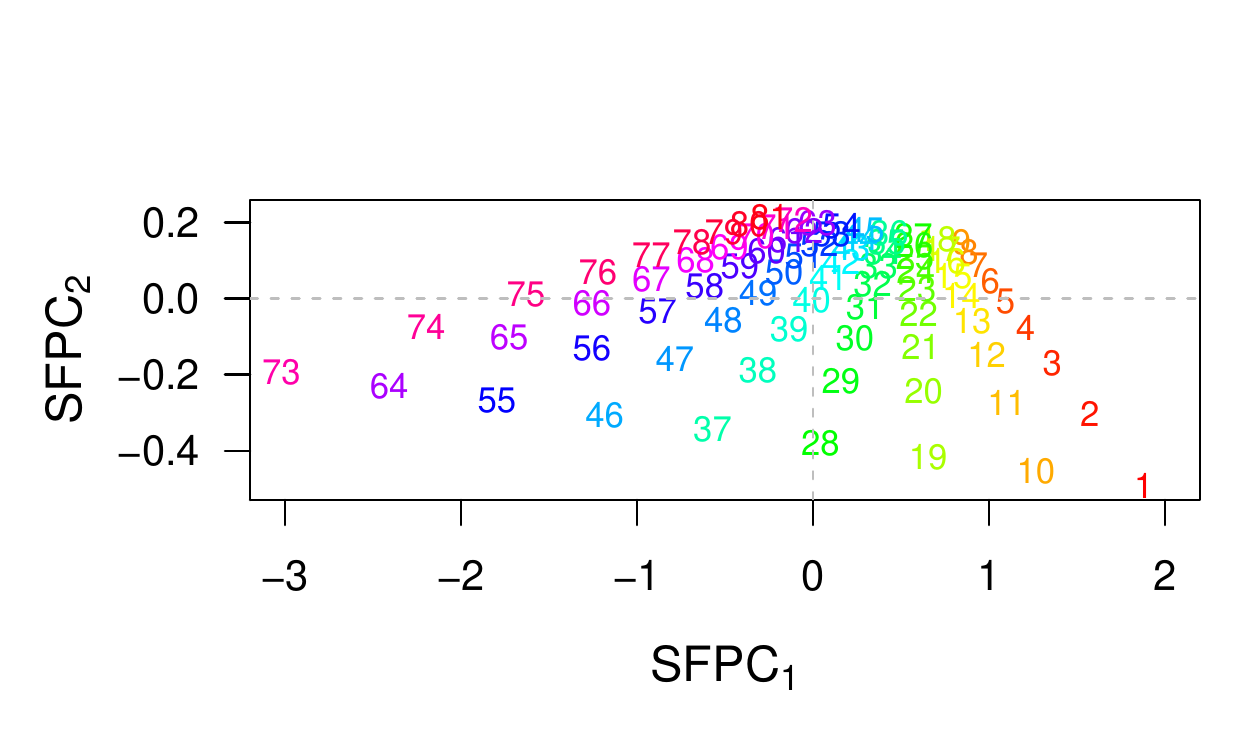}\includegraphics[width=0.33\textwidth]{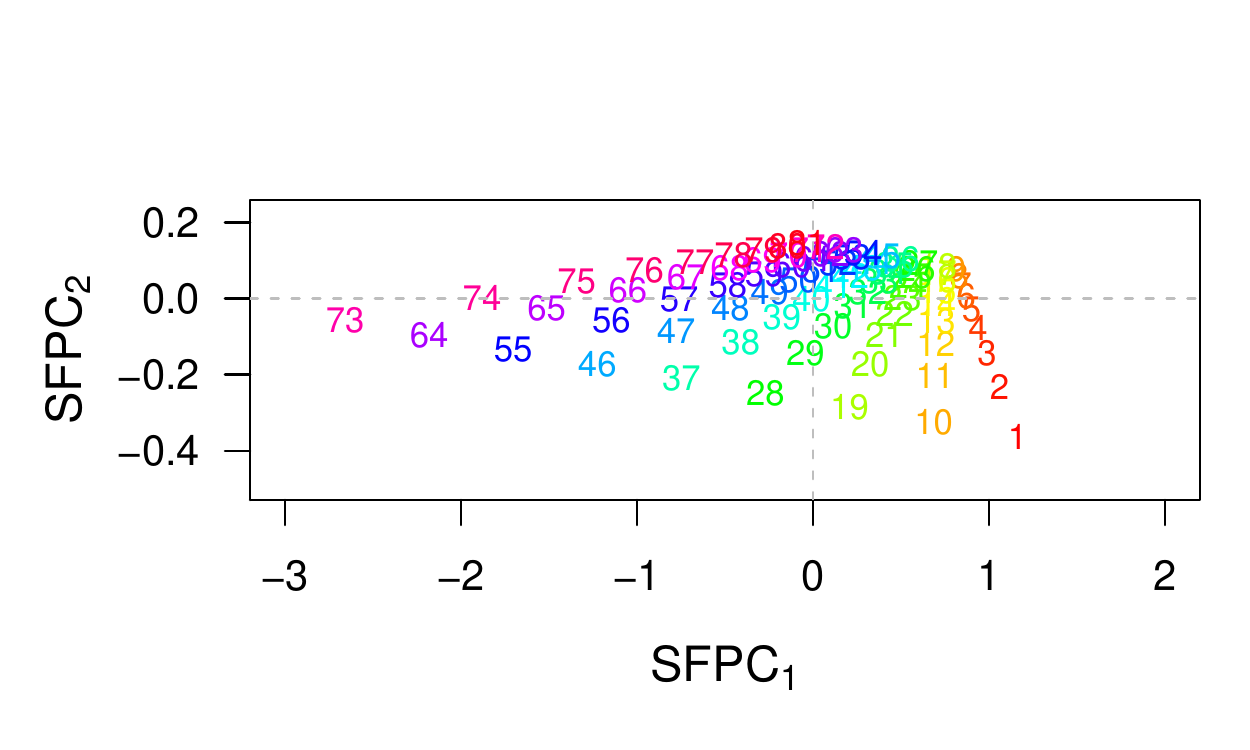}\includegraphics[width=0.33\textwidth]{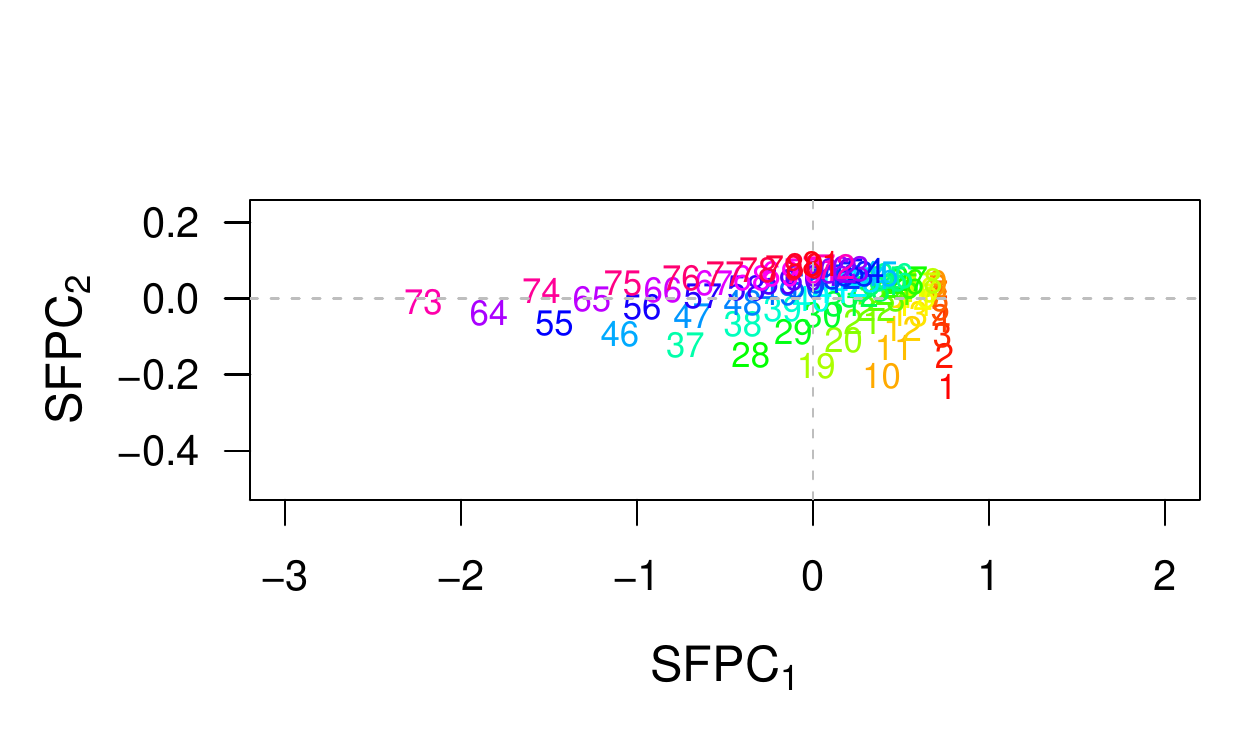}
 	\caption{Scores for SFPC$_1$ and SFPC$_2$.}
    \label{fig:sfpca-lognor-exp123-scor}
	\end{subfigure}
    \centering
    \begin{subfigure}[t]{1\textwidth}
        \centering
        \includegraphics[width=0.33\textwidth]{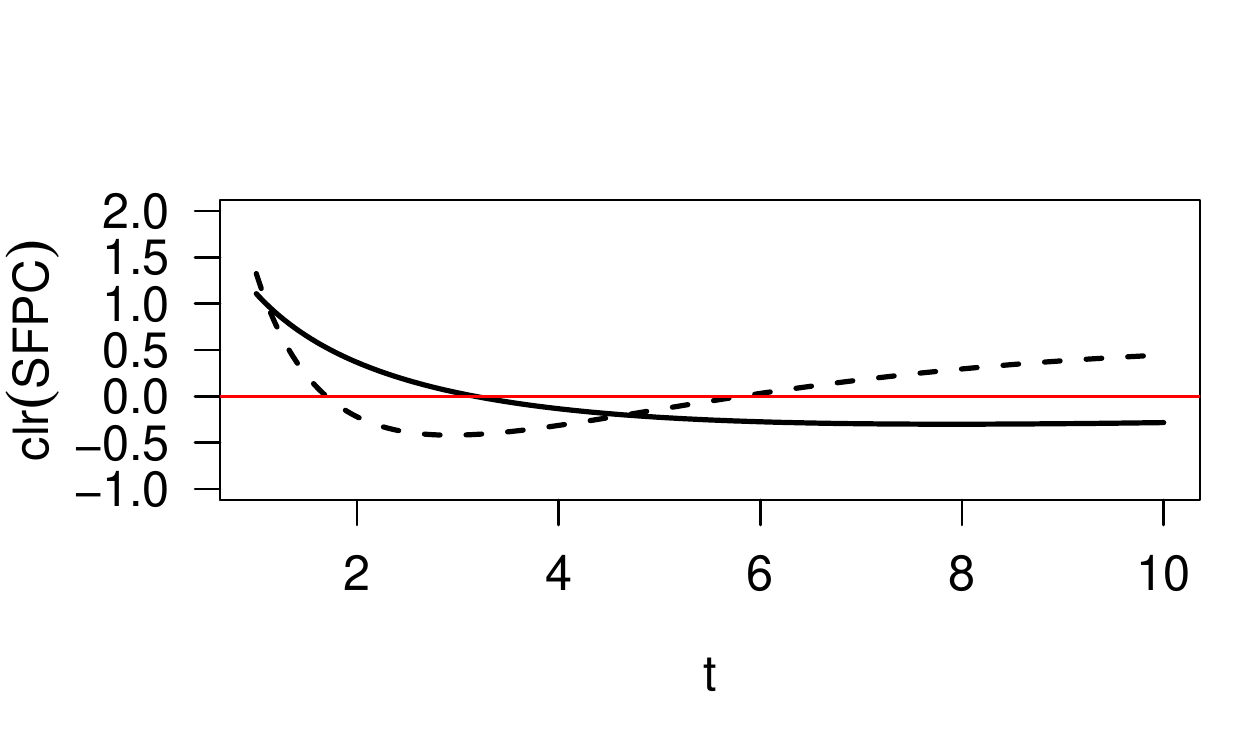}\includegraphics[width=0.33\textwidth]{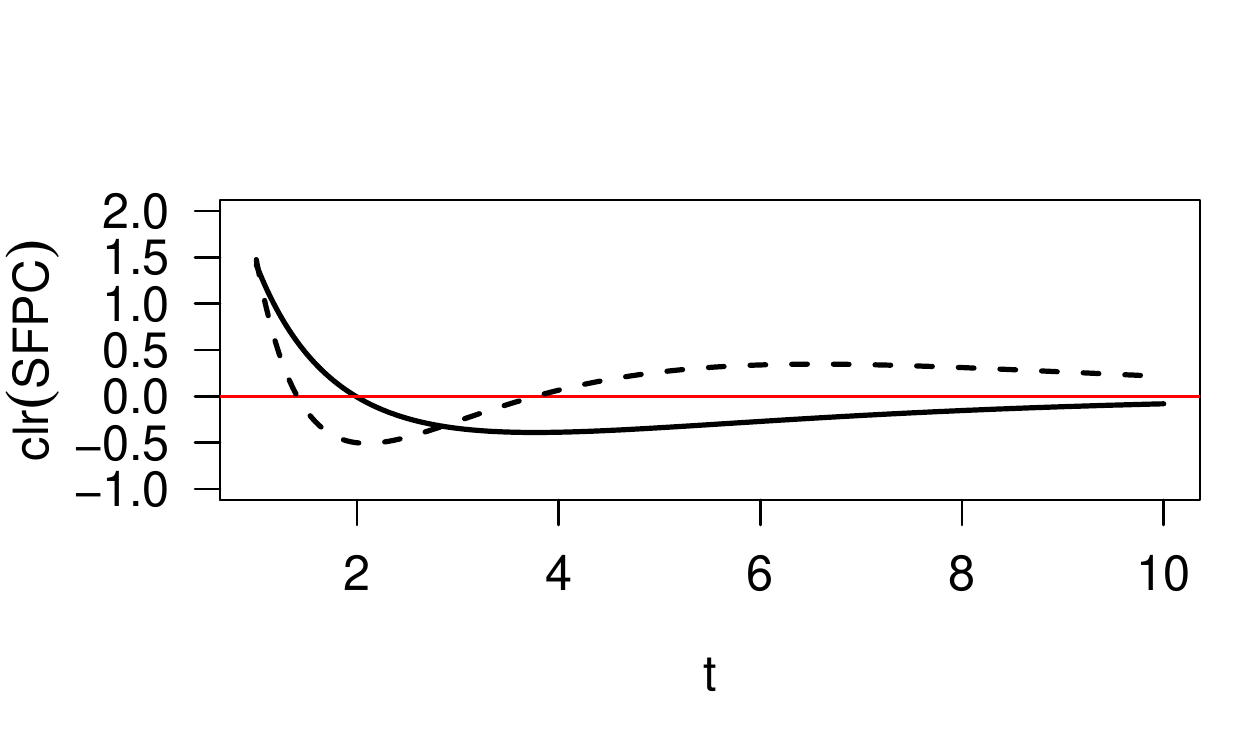}\includegraphics[width=0.33\textwidth]{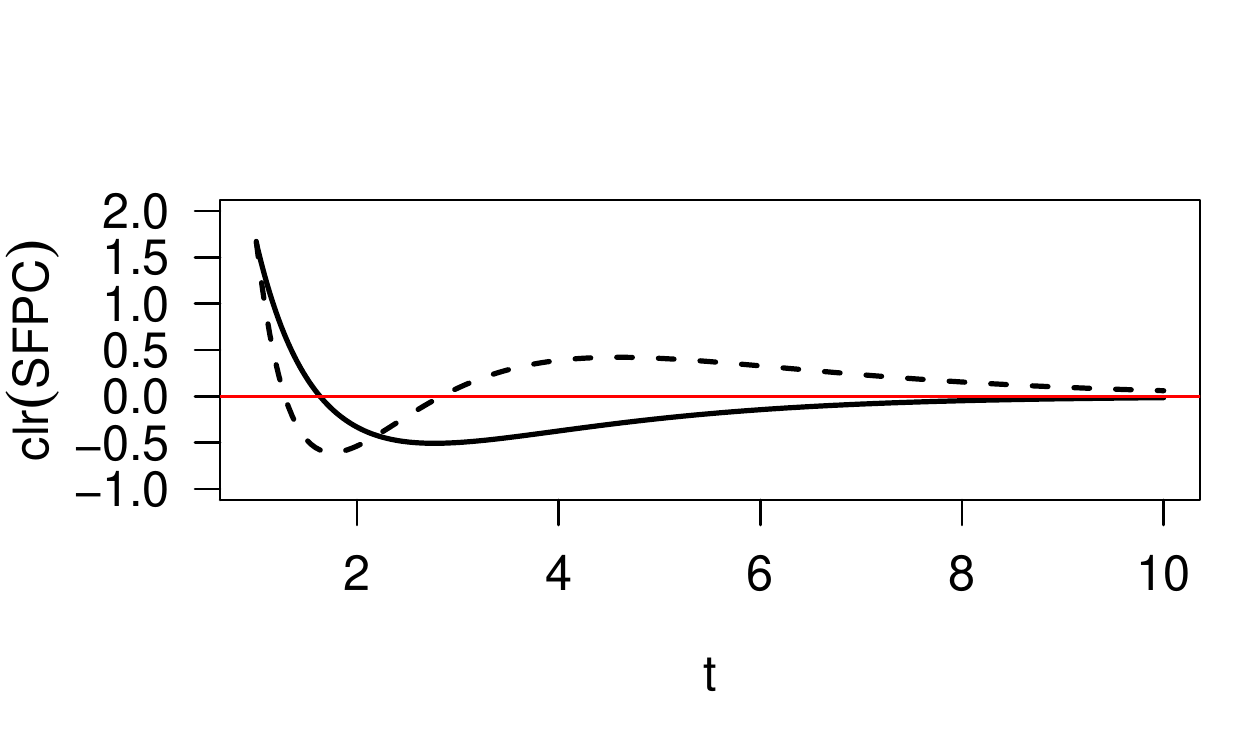}
        \caption{$\clr_u$ transform of the wSFPC$_1$ (solid line; explained variability: 96.48$\%$, 97.80$\%$, 98.76$\%$) and wSFPC$_2$ (dashed line; explained variability: 3.52$\%$, 2.20$\%$, 1.24$\%$).}
	\end{subfigure}
	\begin{subfigure}[t]{1\textwidth}
        \centering
        \includegraphics[width=0.33\textwidth]{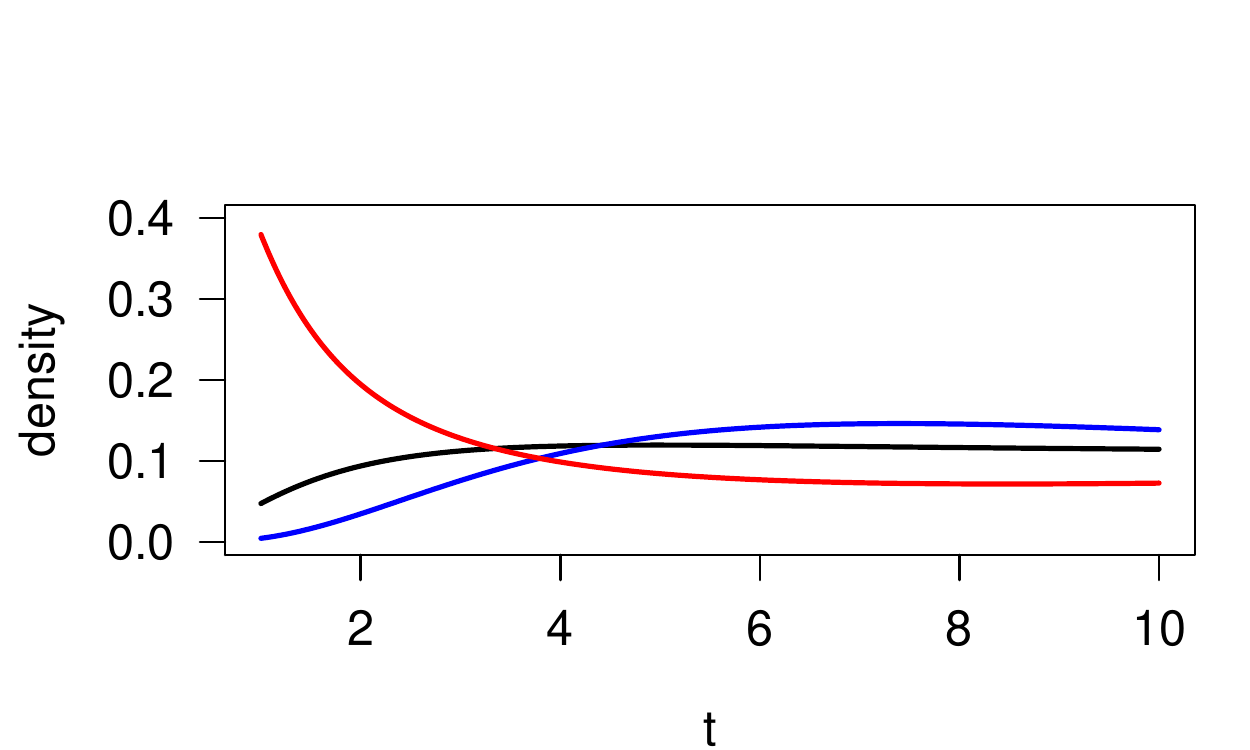}\includegraphics[width=0.33\textwidth]{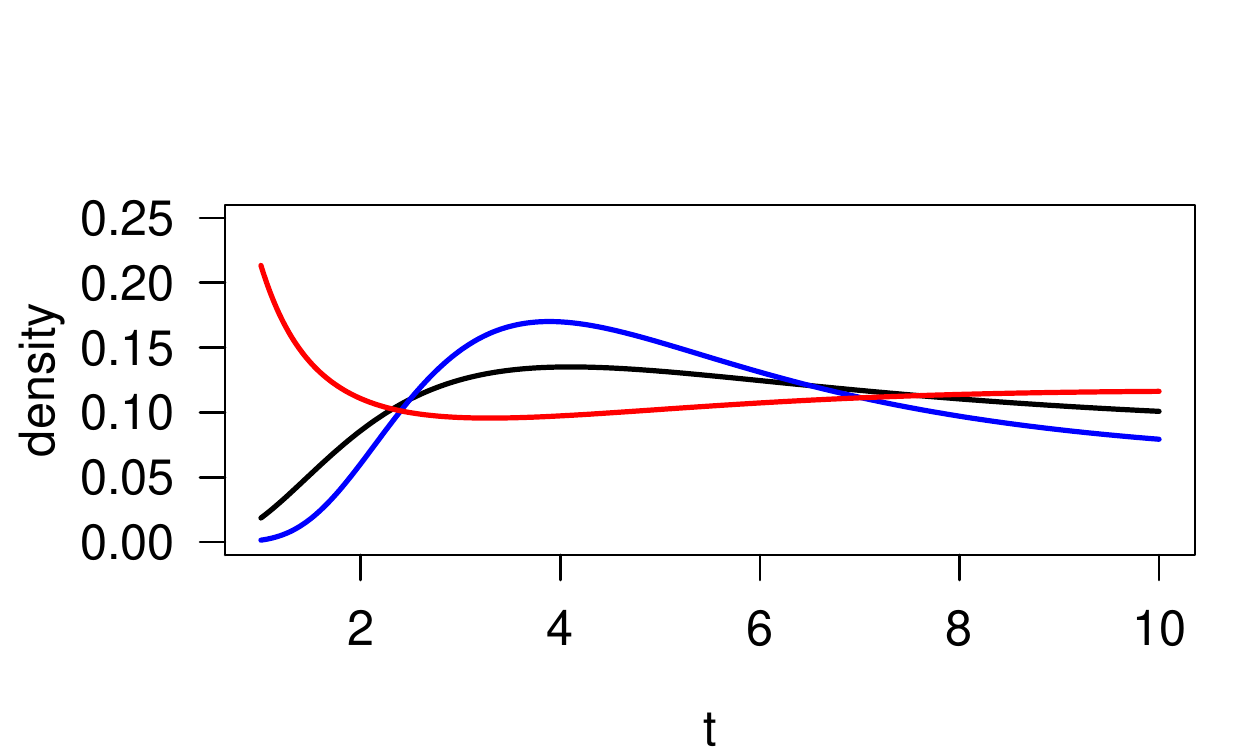}\includegraphics[width=0.33\textwidth]{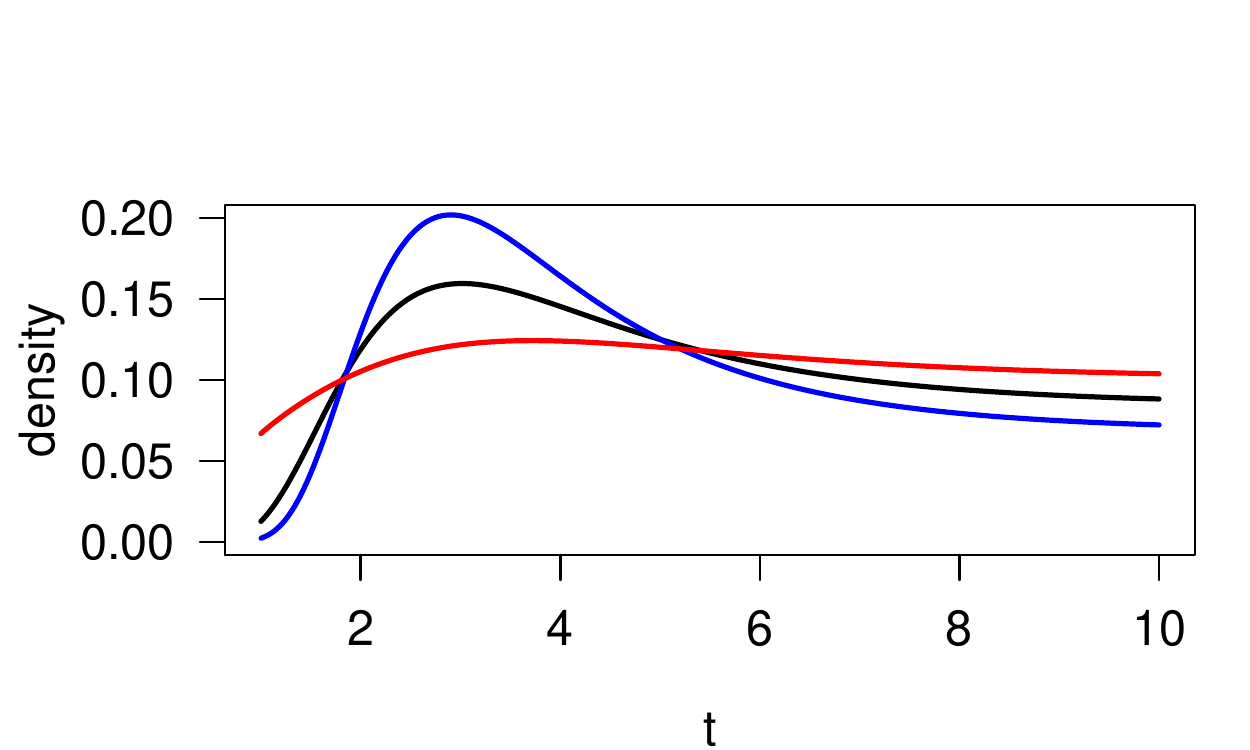}
        \caption{$\mathcal{B}^2$-unweighted version of $\bar{f}_{\mathsf{P}} \oplus_{\mathsf{P}} / \ominus_{\mathsf{P}} 2\sqrt{\rho_1} \odot_{\mathsf{P}} \text{wSFPC}_{1}$ in {$\mathcal{B}^2(\lambda)$}, with $\mathsf{P}=\mathsf{P}^\delta$.}\label{fig:sfpc-log-norm-exp123-harm1}
	\end{subfigure}
	\begin{subfigure}[t]{1\textwidth}
        \centering
        \includegraphics[width=0.33\textwidth]{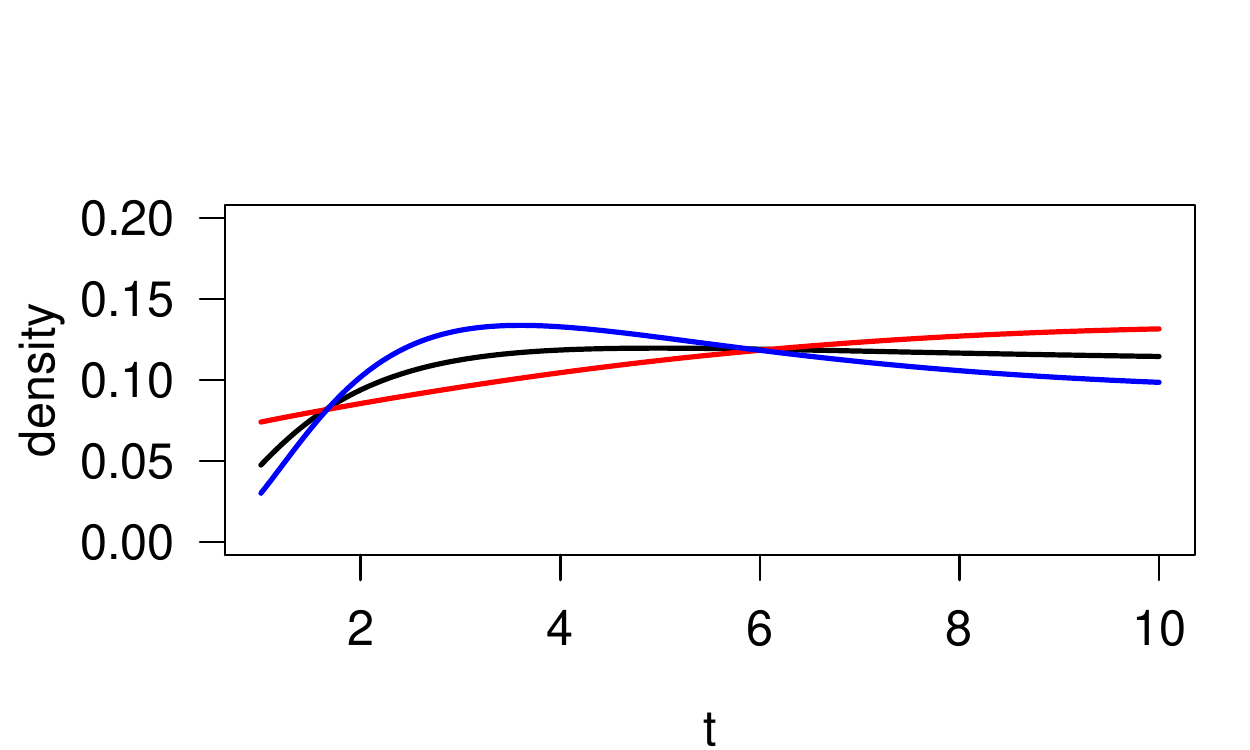}\includegraphics[width=0.33\textwidth]{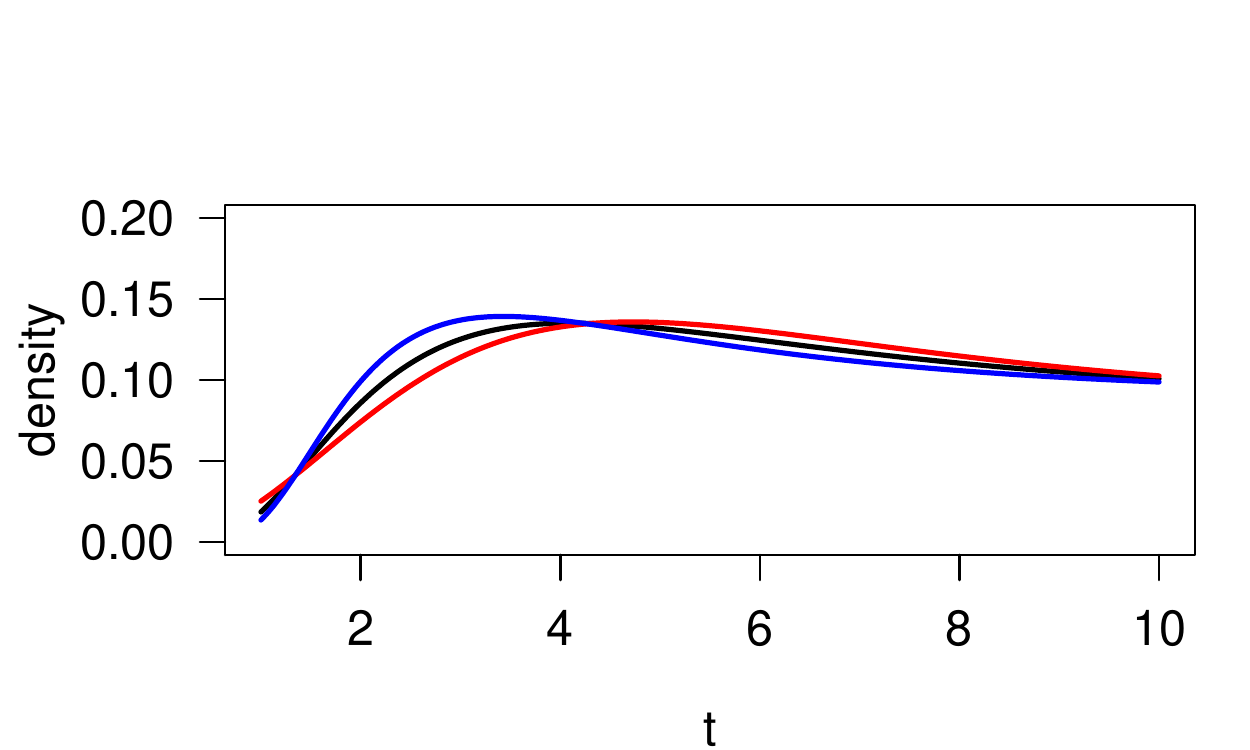}\includegraphics[width=0.33\textwidth]{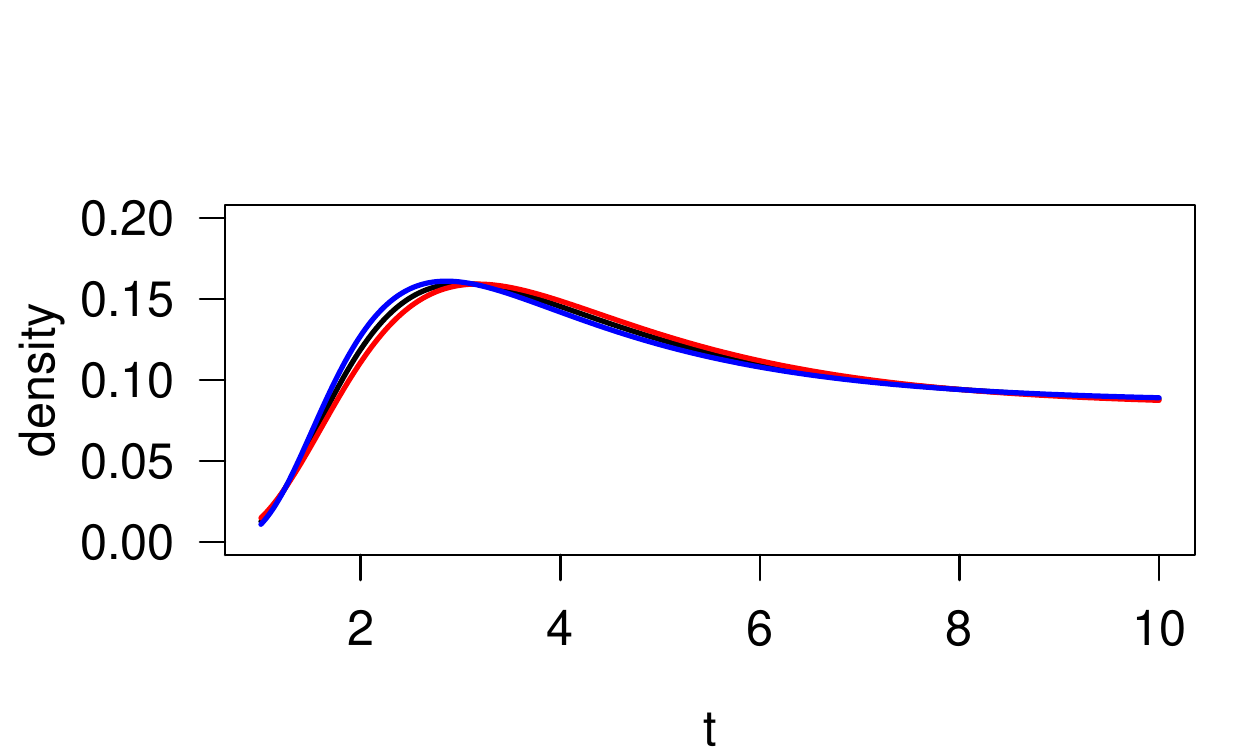}
        \caption{$\mathcal{B}^2$-unweighted version of $\bar{f}_{\mathsf{P}} \oplus_{\mathsf{P}} / \ominus_{\mathsf{P}} 2\sqrt{\rho_2} \odot_{\mathsf{P}} \text{wSFPC}_{2}$ in {$\mathcal{B}^2(\lambda)$}, with $\mathsf{P}=\mathsf{P}^\delta$.}\label{fig:sfpc-log-norm-exp123-harm2}
	\end{subfigure}
	\begin{subfigure}[t]{1\textwidth}
        \centering
        \includegraphics[width=0.32\textwidth]{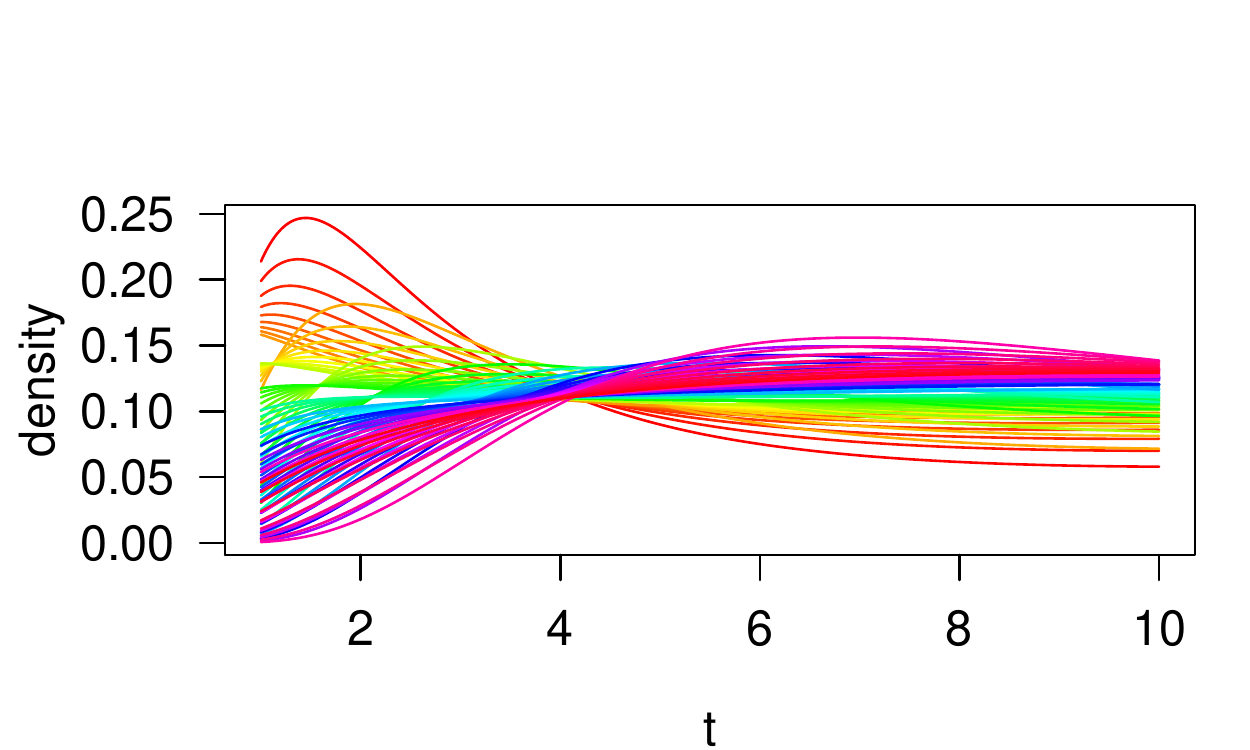} 
        \includegraphics[width=0.32\textwidth]{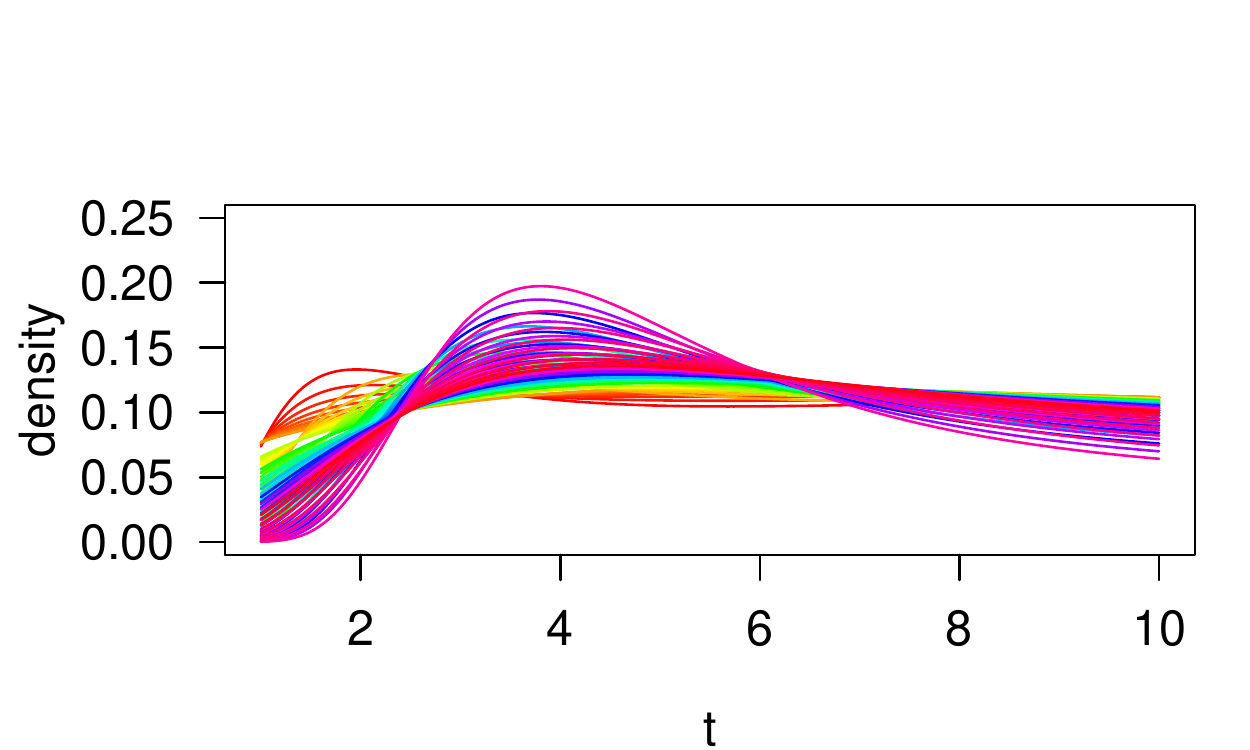} 
        \includegraphics[width=0.32\textwidth]{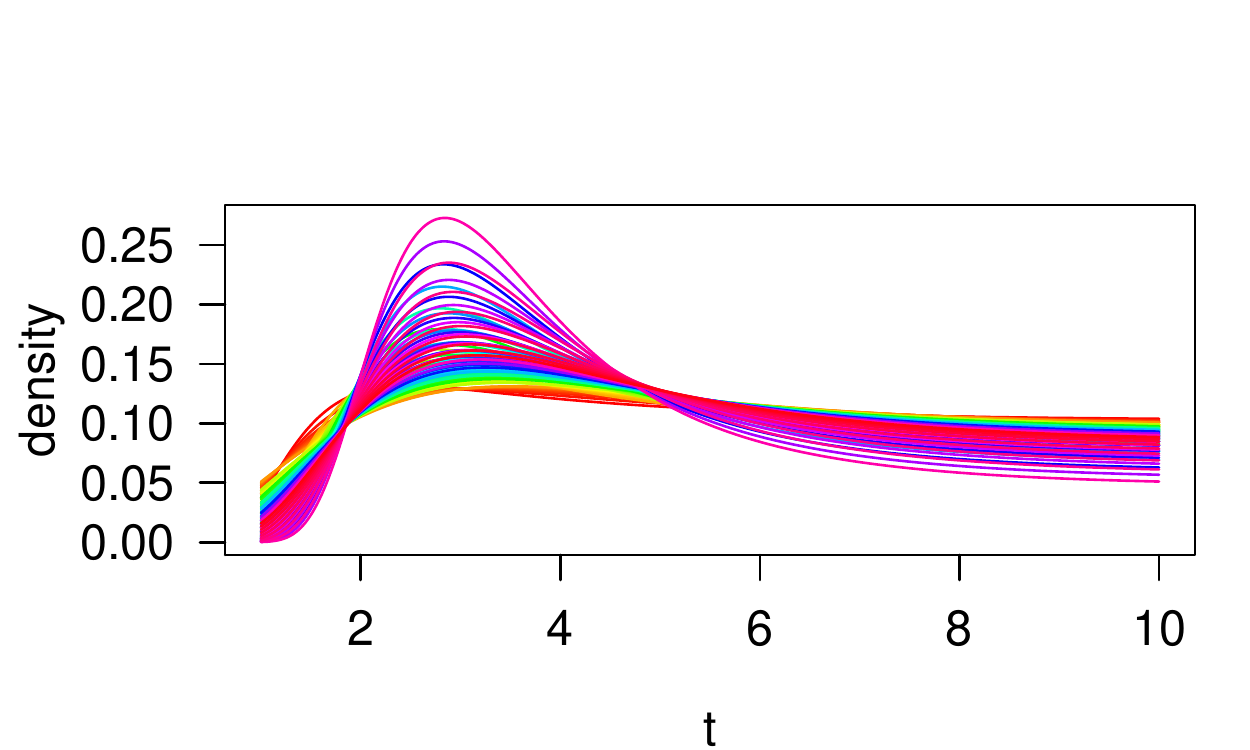} 
        \caption{$\mathcal{B}^2$-unweighted version of the approximation of the $\mathsf{P}$-densities via wSFPC$_1$ and wSFPC$_2$.}
				\label{proj-lognorm-exp123}
		\end{subfigure}
    \caption{Results of SFPCA for simulated log-normal densities in case of exponential reference measures with $\delta=0.25$ (first column), $\delta=0.75$ (second column) and $\delta=1.25$ (third column). By $\mathcal{B}^2$-unweighted version of $f\in \mathcal{B}^2({\mathsf P}^\delta)$ it is meant $\omega^{-1}(f_{{\mathsf P}^\delta}) \in \mathcal{B}^2(\lambda)$
}
\label{fig:sfpca-lognor-exp123}
\end{figure}

When placing more emphasis on the left-hand side of the support $I$ by increasing the parameter $\delta$ of the exponential reference measure (Figure \ref{fig:sfpca-lognor-exp123}), the portion of explained variability increases in SFPC$_1$ (and thus decreases in SFPC$_2$). Regardless of the reference measure, the first clr-wSFPC suggests that the main contribution to the total variability is associated with a contrast between the left-hand side of the domain and the other side. It should be pointed out that clr-transformed densities always display a contrast due to the zero integral constraint. However, it is worth noticing that the zero-crossing point moves to the left when the reference measure is changed using higher values of $\delta$. The same pattern is observed for the second clr-wSFPC since it still highlights the variability in the left-hand side of the domain, but additionally it presents a contrast between the central and the rest. These conclusions are further supported by Figures \ref{fig:sfpc-log-norm-unif-harm12}, \ref{fig:sfpc-log-norm-exp123-harm1} and \ref{fig:sfpc-log-norm-exp123-harm2}, where, for $\mathsf{P}=\mathsf{P}_0$ and $\mathsf{P}=\mathsf{P}^\delta$ respectively, the mean density is perturbed $(\oplus_\mathsf{P} / \ominus_\mathsf{P})$ by the SFPC powered $(\odot_\mathsf{P})$ to twice the standard deviation $\sqrt{\rho}$ along the corresponding direction $\xi_{\mathsf{P}}$ (i.e. $\bar{f}_\mathsf{P} \oplus_\mathsf{P} / \ominus_\mathsf{P} (2\sqrt{\rho_j} \odot_\mathsf{P}\xi_{\mathsf{P},j})$, where the $(\rho_j, \xi_{\mathsf{P},j})$ is the $j$th eigenpair of the covariance operator $V$). 
These results suggest that, when a uniform reference $\mathsf{P}_0$ or an exponential $\mathsf{P}^\delta$ with $\delta=0.25$ are considered, the main mode of variability resides in the left-hand side of the domain. Changing the reference measure to $\mathsf{P}^\delta$ has the effect of inflating the variability of the data in the central-left section of the domain (around the interval $[2,4]$, see also Figure \ref{fig:log-norm-exp123-orig}), with a direct effect on the variability displayed along the first wSFPC.

Figures \ref{fig:sfpca-lognor-unif-scor} and \ref{fig:sfpca-lognor-exp123-scor} display the score plots of wSFPCA under a $\mathsf{P}_0$ and $\mathsf{P}^{\delta}$ respectively. The symbols represent the indices of the data points, with $f_{\mathsf{P}, ij}$ being represented through the index $\kappa = j+9(i-1)$, $i,j = 1,...,9$. Recalling that the sampling design considers $\mu_i = 0.6+0.25\cdot(i-1)$ and $\sigma_j = 0.5+0.07\cdot(j-1)$ for $i, j=1, \ldots,9$; note that SFPC$_1$ arranges the densities according to parameter $\mu_i$ whereas SFPC$_2$ according to parameter $\sigma_j$.

Finally, Figures \ref{proj_unif_12} and \ref{proj-lognorm-exp123} display the projection of the log-normal densities on the basis generated by the first two wSFPCs, each represented in the unweighted $\mathcal{B}^2({\lambda})$ space (i.e., after $\clr_u$ transformation). These results confirm that the dimensionality of the affine spaces of $\mathcal{B}^2({\mathsf{P}})$, for ${\mathsf P}={\mathsf P}_0$ and  ${\mathsf P}={\mathsf P}^\delta$, spanned by the log-normal family is indeed captured by the first two wSFPCs.

Consistent results are obtained from a second data set consisting of Weibull densities. For the sake of brevity, they are not included here and briefly shown in the supplementary material.

\section{A real-world example: weighted SFPCA of Italian income data}\label{sec:casestudy}
As an illustrative example, we apply wSFPCA to income data from the \emph{Survey on Household Income and Wealth} (SHIW) conducted by the Italian Central Bank. They include almost 8000 interviewed households composed of 19907 individuals and 13266 income-earners and freely available on the internet \cite{shiw}. We focus on annual net disposable income 
of households in all the 20 Italian regions. These were further grouped into three natural areas according to their geographical location to examine possible differences between regions (see Figure \ref{fig:map-it-income}).
\begin{figure}
    \centering
    \includegraphics[width=0.30\textwidth]{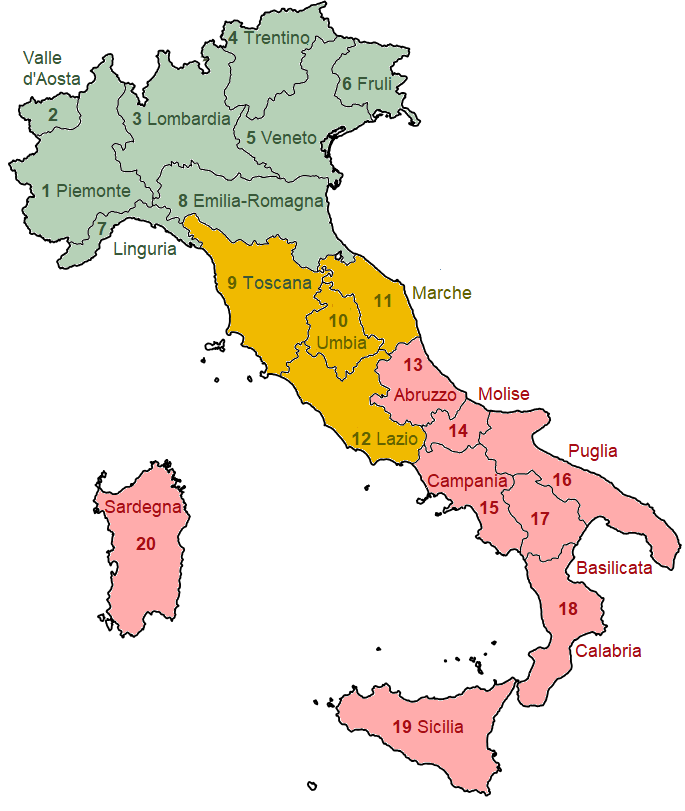}
	\qquad
	\includegraphics[width=0.33\textwidth]{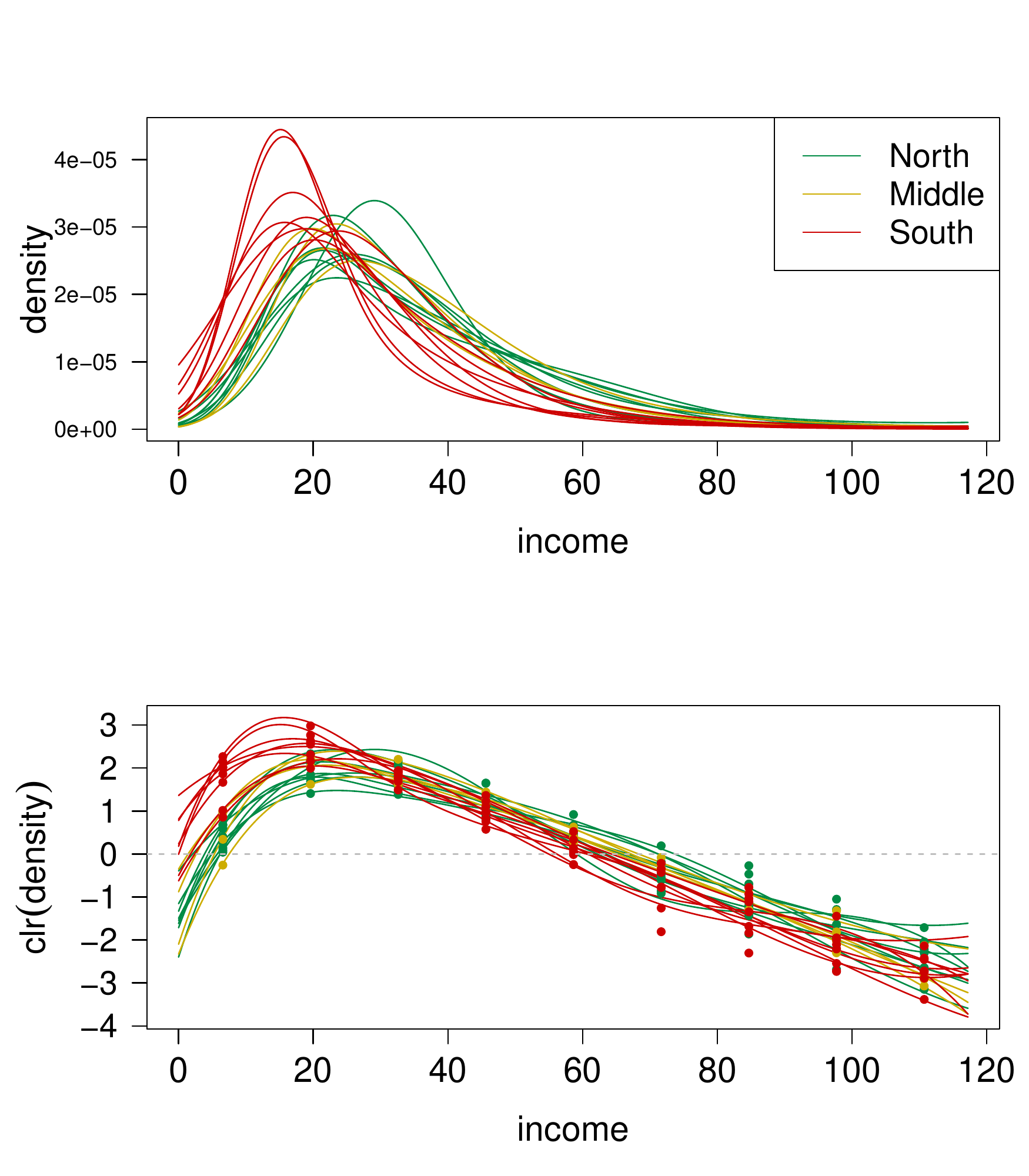}
	\caption{Italian SHIW income data. Left: map of Italy and its 20 regions with color distinguishing northern (green), middle (gold) and southern and island (red) regions according to the National statistical institute (ISTAT). Right: smoothed income densities in Bayes $\mathcal{B}^2(\lambda)$  and clr spaces (using Lebesgue reference measure); income is expressed in $10^3$ {k\euro}.
}
\label{fig:map-it-income}
\end{figure}

The raw income data from individual regions were aggregated into histogram data -- proportions of 9 equidistant income classes determined by Sturges rule -- for non-zero incomes up to 117.22 {k\euro}. Only incomes below the 99\%-quantile were used and extreme values were excluded. Following \cite{Mach}, a discrete version of the clr transformation was applied and the results were smoothed via a B-spline basis in $L^2_0$ with Lebesgue reference measure. Cubic smoothing splines were employed with support $I=[0,117.22]$ {k\euro} and 4 knots at income values 0, 30, 70 and 117.22 {k\euro} (the parameters were set to obtain a good fit of the raw density data yet avoiding overfitting). In Figure \ref{fig:map-it-income} (right) the resulting density functions (with respect to the Lebesgue measure) are displayed in the Bayes space $\mathcal{B}^2(\lambda)$ as well as in $L^2_0(\lambda)$ after the $\clr_{\lambda}$ transformation. The color scheme matches that used for the geographical map (Figure \ref{fig:map-it-income} left). Visual inspection of Figure \ref{fig:map-it-income} suggests that a regional pattern may be present, as northern regions seem to be associated with higher incomes than southern ones. This probably relates to the fact that a large number of business and industries are based in the north. The cost of living is not homogeneous across the regions either, which may also play a major role in determining the actual salaries.

In the following, we describe the results of wSFPCA when the reference measure is set (i) to the Lebesgue measure, (ii) the exponential measure $\mathsf{P}^\delta$ (Section \ref{sec:simu}), and (iii) the measure $\mathsf{P}^m$ corresponding to the unweighted sample mean of the data as in \cite{boogaart14}. Figure \ref{fig:income-it-exp2-mean} displays the (ii) and (iii) cases, together with the corresponding $\mathcal{B}^2$-unweighted densities ($\omega^{-1}(f_{\mathsf{P}})$).

\begin{figure}
    \centering
	\begin{subfigure}[t]{0.4\textwidth}
        \centering
        \includegraphics[width=\textwidth]{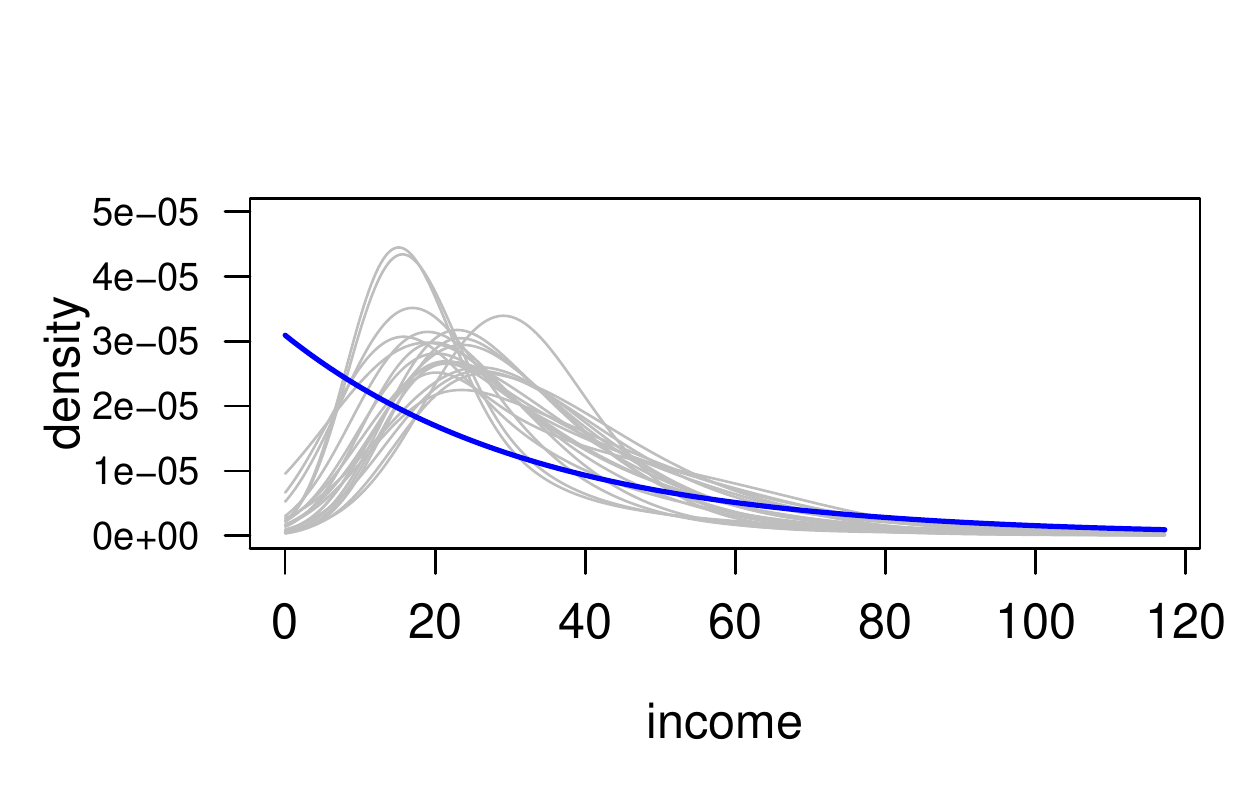}
        \caption{Data (grey lines) and exponential reference density with $\delta=3\times 10^{-5}$ (blue line).}
	\end{subfigure}
	\begin{subfigure}[t]{0.4\textwidth}
        \centering
        \includegraphics[width=\textwidth]{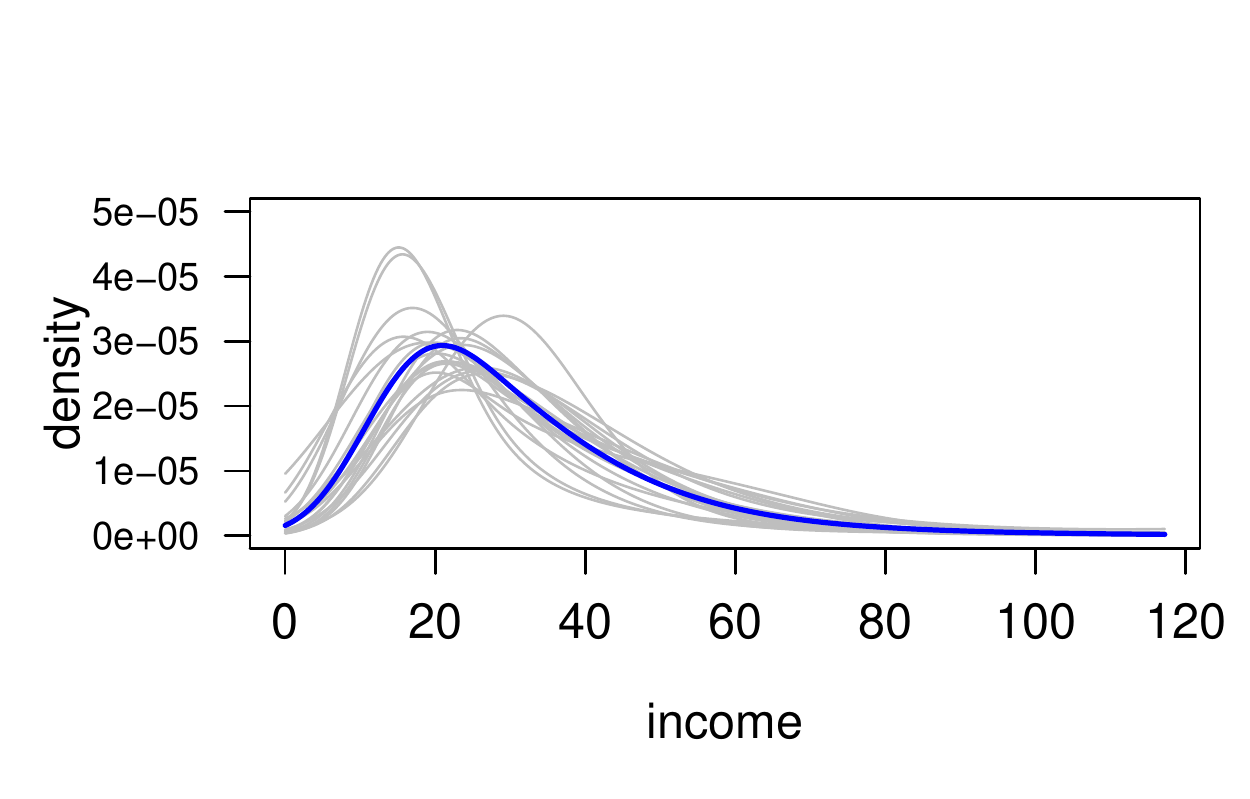}
        \caption{Data (grey lines) and mean reference density (blue line).}
	\end{subfigure}
	\begin{subfigure}[t]{0.4\textwidth}
        \centering
        \includegraphics[width=\textwidth]{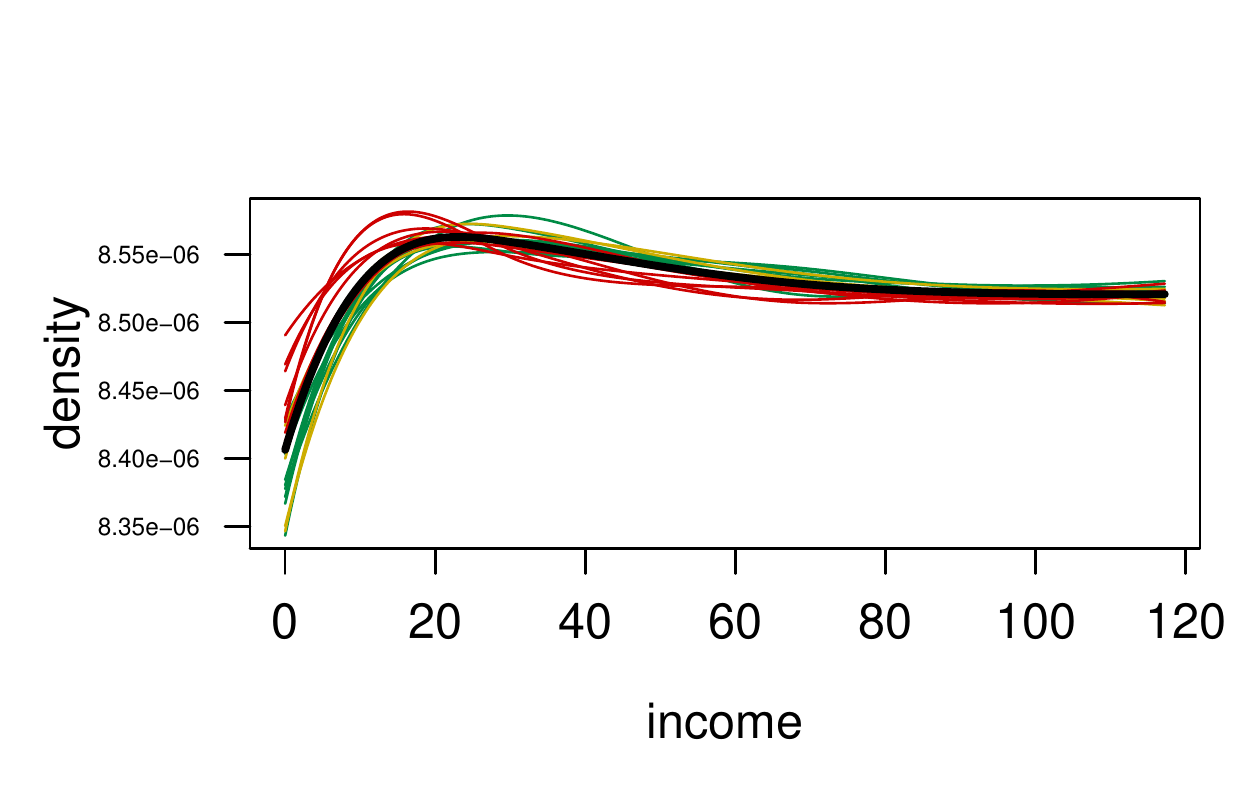}
        \caption{$\mathcal{B}^2$-unweighted version of $\mathsf{P}$-densities when $\mathsf{P}=\mathsf{P}^\delta$ and their mean function (black line).}
	\end{subfigure}
	\begin{subfigure}[t]{0.4\textwidth}
        \centering
        \includegraphics[width=\textwidth]{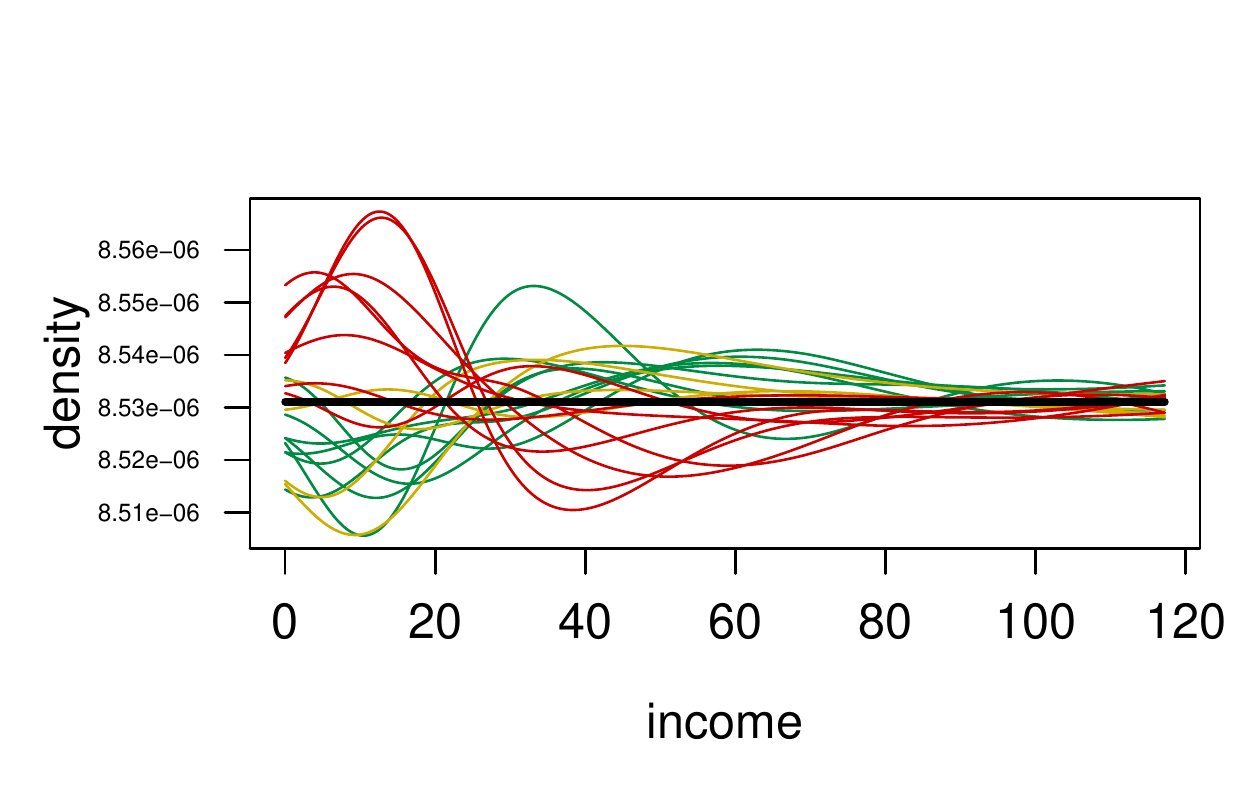}
        \caption{$\mathcal{B}^2$-unweighted version of $\mathsf{P}$-densities when $\mathsf{P}=\mathsf{P}^m$ and their mean function (black line).}
	\end{subfigure}
\caption{Income densities in case of exponential (left column) and mean reference measure (right column). By $\mathcal{B}^2$-unweighted version of $f\in \mathcal{B}^2({\mathsf P})$ it is meant $\omega^{-1}(f_{{\mathsf P}}) \in \mathcal{B}^2(\lambda)$.
}
\label{fig:income-it-exp2-mean}
\end{figure}

\paragraph*{\textbf{SFPCA w.r.t. Lebesgue measure}} SFPCA was performed by considering the Lebesgue reference measure as in \cite{hron16}. The results are reported in the first column of Figure \ref{fig:sfpca-income-leb-exp2-mean}. Figure \ref{fig:sfpca-income-leb-exp2-mean} displays the $\clr_\lambda$ transform of the first two SFPCs. The first clr-SFPC is interpreted as a contrast between the bottom band of the income distribution (i.e., income lower than 36.6 {k\euro}) and the rest. The second SFPC still contrasts low against high incomes, but provides further insight into differences in the central band of the distribution (i.e. for middle income values). These findings are also well reflected in Figure \ref{fig:sfpca-income-leb-exp2-mean}c-d, which displays variation along the first and second SFPCs respectively with respect to the sample mean. Having fixed the sign of the clr-SFPC as in Figure \ref{fig:sfpca-income-leb-exp2-mean}b, high scores along the first principal direction are predominantly associated with regions characterized by more low-income households than the average and, conversely, low scores are expected for high-income regions. Similarly, Figure \ref{fig:sfpca-income-leb-exp2-mean}d supports the interpretation of the second principal direction. From Figure \ref{fig:sfpca-income-leb-exp2-mean}a, the first SFPC can be clearly associated with geographical location, as the northern and central regions (higher incomes) appear well separated from the southern regions (lower incomes) along this direction. Finally, the approximation of smoothed density data using only the first SFPC is shown in Figure \ref{fig:sfpca-income-leb-exp2-mean}e. Comparing this with the actual data (Figure \ref{fig:map-it-income} right), the goodness of the approximation can be appreciated.

\paragraph*{\textbf{wSFPCA w.r.t. exponential measure}} An exponential reference measure $\mathsf{P}=\mathsf{P}^{\delta}$ was used in order to emphasize the relative scale of income values, with $\delta$ optimizing a data-driven criterion. In particular, $\delta$ maximises regional discrimination along the first principal directions. We remark that other criteria may be of interest, e.g. one may want to attain a certain rate of explained variability by the first SFPCs, or to select the reference measure that best fits the data. Following our criterion, Table \ref{table1} presents the classic decomposition of the total sum of squares (SS$_T$) into between-groups (SS$_B$) and within-groups (SS$_W$) sum of squares when the scores for the wSFPC$_1$ using $\mathsf{P}=\mathsf{P}^\delta$ are modeled via a one-way analysis of variance (ANOVA), using the Italian regions (north, center, south) as factor. Amongst the tested reference measures $\mathsf{P}^{\delta}$, we selected the one associated with the highest ratio SS$_B$/SS$_T$ (i.e. the highest discrimination between groups), which is $\delta=3\times 10^{-5}$. Note that we could otherwise consider a Fisher's canonical direction as in ordinary discriminant analysis, which provides the direction of maximum discrimination between groups.

\begin{table}
\centering
\begin{tabular}{ l || c | c | c|| r}
                & SS$_B$ &  SS$_W$  & SS$_T$ & SS$_B$/SS$_T$  \\
\hline	
	Uniform                    & 1.5030 & 0.7130 & 2.2160 & 0.6782  \\
	Exp($1.5\times 10^{-5}$)   & 2.1811 & 0.8425 & 3.0236 & 0.7214 \\
	Exp($3\times 10^{-5}$)     & 2.3631 & 0.9111 & 3.2742 & \textbf{0.7217} \\
	Exp($6\times 10^{-5}$)     & 1.8540 & 0.9128 & 2.7668 &  0.6701 \\
	Exp($1.2\times 10^{-4}$)   & 0.8609 & 0.8515 & 1.7124 &  0.5027 \\
	\hline
\end{tabular}
\caption{ANOVA sum of squares decomposition for the first SFPC scores based on uniform and exponential reference measures using region as factor. The exponential measures were $\mathsf{P}^\delta$, with $\delta \in \left\{1.5\times 10^{-5}, 3\times 10^{-5}, 6\times 10^{-5}, 1.2\times 10^{-4}\right\}$.}
\label{table1}
\end{table}

SFPCA was performed on the dataset consisting of exponentially weighted distributions (Figures \ref{fig:income-it-exp2-mean}a-b). The results are reported in the second column of Figure \ref{fig:sfpca-income-leb-exp2-mean}. The score plot (\ref{fig:sfpca-income-leb-exp2-mean}a) shows that the configuration of the scores well represents geographical locations, even though it is somehow similar to the one obtained obtain using the Lebesgue reference measure. However, the amount of variability explained along the first two SFPCs is higher in comparison to the unweighted case (Figure \ref{fig:sfpca-income-leb-exp2-mean}b).

It is worth noticing that the interpretation of the wSFPCs appears to be affected by the change in the reference measure. Indeed, although the first SFPC (Figure \ref{fig:sfpca-income-leb-exp2-mean}b) still represents a contrast between low and high incomes households, the second SFPC displays a contrasts within the low-income group. This could prompt further interesting economic interpretation, as this contrast might be related with an unequal redistribution of wealth within the lower income class. For instance, the (annual) poverty threshold in 2008 for a household of two members was 11.8~{k\euro}, which is roughly half of the zero-crossing of SFPC$_1$ and close to the zero-crossing of SFPC$_2$. Hence, weighting according to the relative scale of income data could help to signaling unequal redistribution of wealth, particularly amongst the low-income population.

{\paragraph*{\textbf{wSFPCA w.r.t. the sample mean}} A different view is obtained when the reference measure is set to the sample mean of the data $\bar{f}$  (density w.r.t. Lebesgue reference measure), computed as
\[
\bar{f}(t) = \frac{1}{N} \odot \bigoplus_{i=1}^N f_i(t), \qquad t \in I.\]
Recall that the reference measure determines the origin of the space $\mathcal{B}^2(\mathsf{P})$, which is a $\mathsf{P}$-density represented by a constant function. This is unchanged when mapped to $\mathcal{B}^2(\lambda)$ through the $\mathcal{B}^2$-unweighting map $\omega^{-1}$. For this reason, the sample weighted mean density in $\mathcal{B}^2(\lambda)$ appears as an uniform density in Figure \ref{fig:income-it-exp2-mean}d. In this case, the representation of the $\mathcal{B}^2$-unweighted data (Figure \ref{fig:income-it-exp2-mean}d) provides additional information about the dispersion of income distributions around their mean. Note that this has to be interpreted as usual (unweighted) PDFs. The distributions vary in different ways across regions: the income distributions in southern regions tend to be more concentrated than the average around low income (the average being represented as a uniform distribution), and they are less concentrated for higher incomes. The opposite is observed for northern and central regions. This is also well reflected by the wSFPCA output which is summarized in the third column of Figure \ref{fig:sfpca-income-leb-exp2-mean}.

The first wSFPC -- depicted in panel (b) -- still contrasts the bottom of the distributions (income below 25.23 k\euro) against their middle and top. The second wSFPC shows differences especially between the middle band of the distributions (income in $\left[18.77, 45.06\right]$ k\euro) and the top band (income over 45.06 k\euro). Note that a higher dispersion of the scores wSFPC$_2$ is observed for northern and central regions in relation to southern regions, which appear almost constant along the second mode of variation. In fact, wSFPC$_2$ seems to reveal a different distribution of wealth in the central band of the income distributions. Lombardia and Friuli regions tend to concentrate more medium-high incomes than the mean. Contrarily, the Valle d'Aosta region is characterized by low-medium incomes, appearing as an outlier along wSFPC$_2$.
The approximation of the sampled income distributions by the first SFPC (capturing almost 80\% of the total variability)  well reflects the data structure, as can be appreciated by comparing Figure \ref{fig:income-it-exp2-mean}d and Figure \ref{fig:sfpca-income-leb-exp2-mean}e.

\begin{figure}
    \centering
	\begin{subfigure}[t]{1\textwidth}
        \centering {\includegraphics[width=0.33\textwidth]{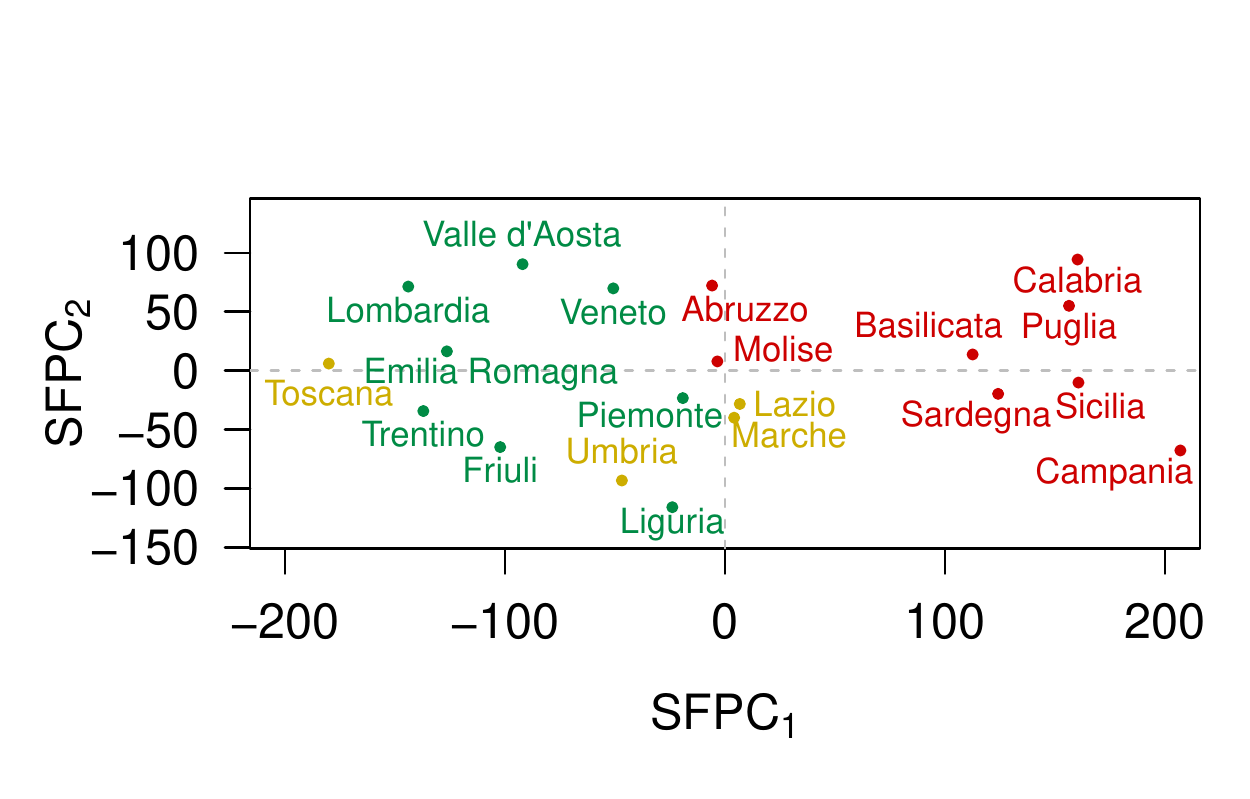}\includegraphics[width=0.33\textwidth]{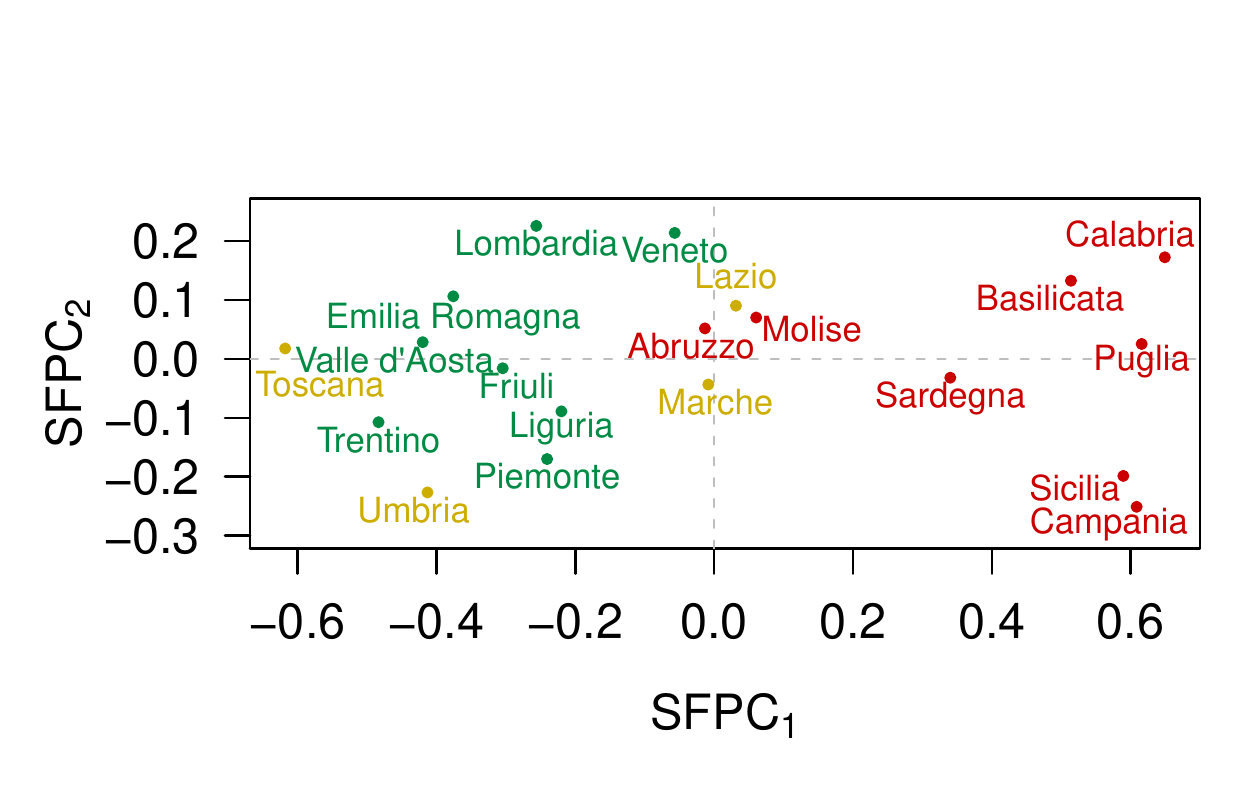}\includegraphics[width=0.33\textwidth]{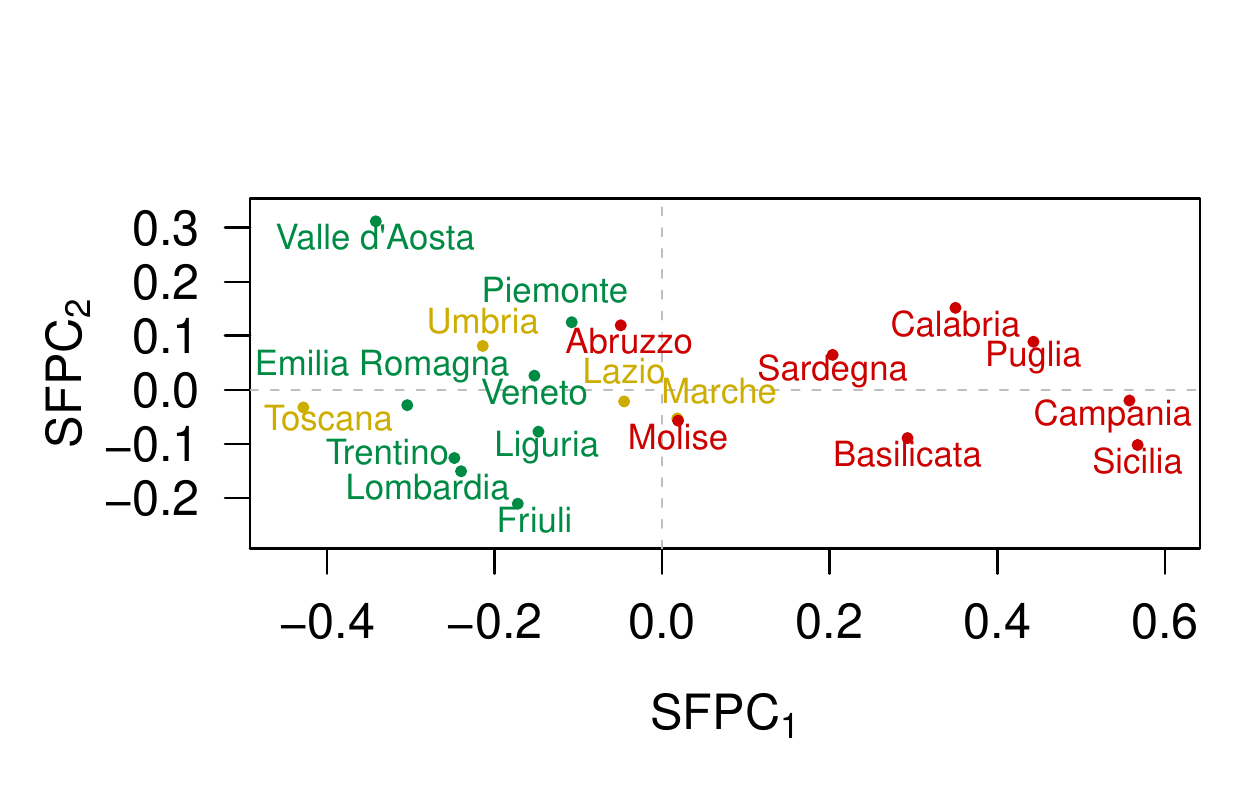}}
		\caption{Scores for SFPC$_1$ and SFPC$_2$}
	\end{subfigure}
	\begin{subfigure}[t]{1\textwidth}	\includegraphics[width=0.33\textwidth]{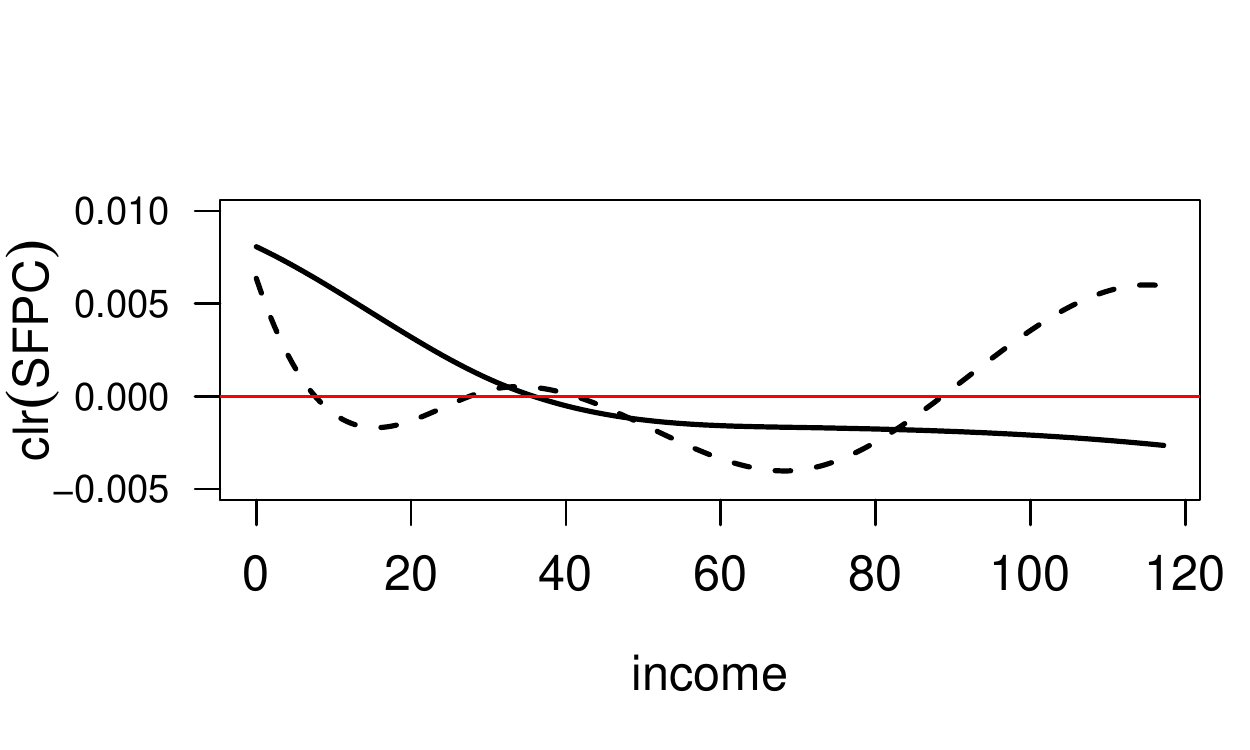}\includegraphics[width=0.33\textwidth]{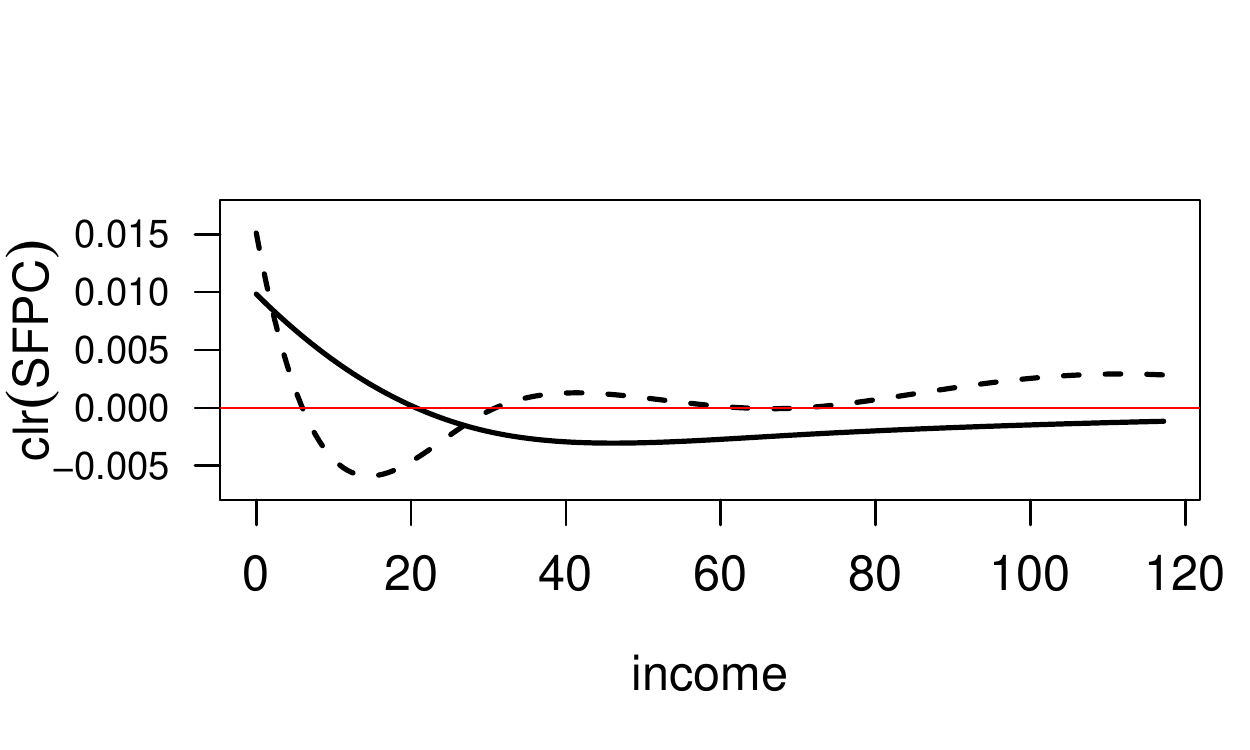}\includegraphics[width=0.33\textwidth]{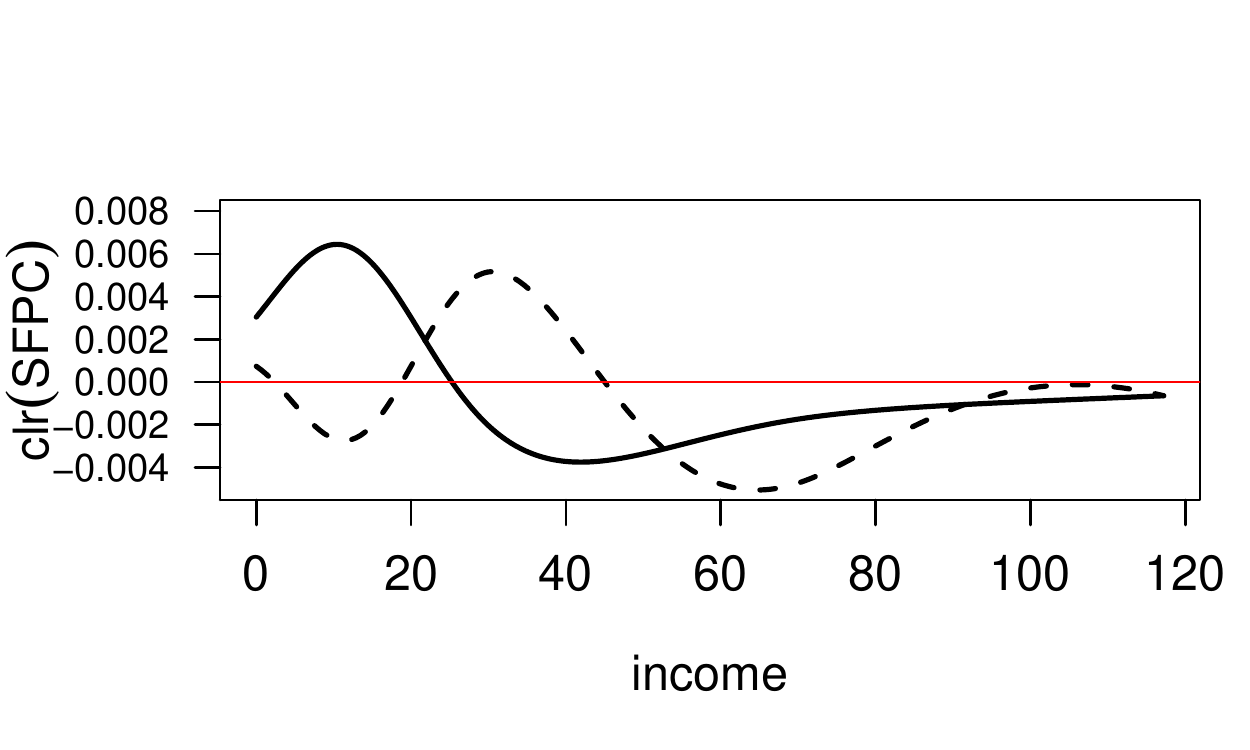}
		\caption{$\clr_u$ transform of the wSFPC$_1$ (solid line; explained variability: 66.08, 80.99, 79.93) and of the wSFPC$_2$ (dashed line; explained variability: 18.14, 9.35, 13.22).}	
	\end{subfigure}
	\begin{subfigure}[t]{1\textwidth} \includegraphics[width=0.33\textwidth]{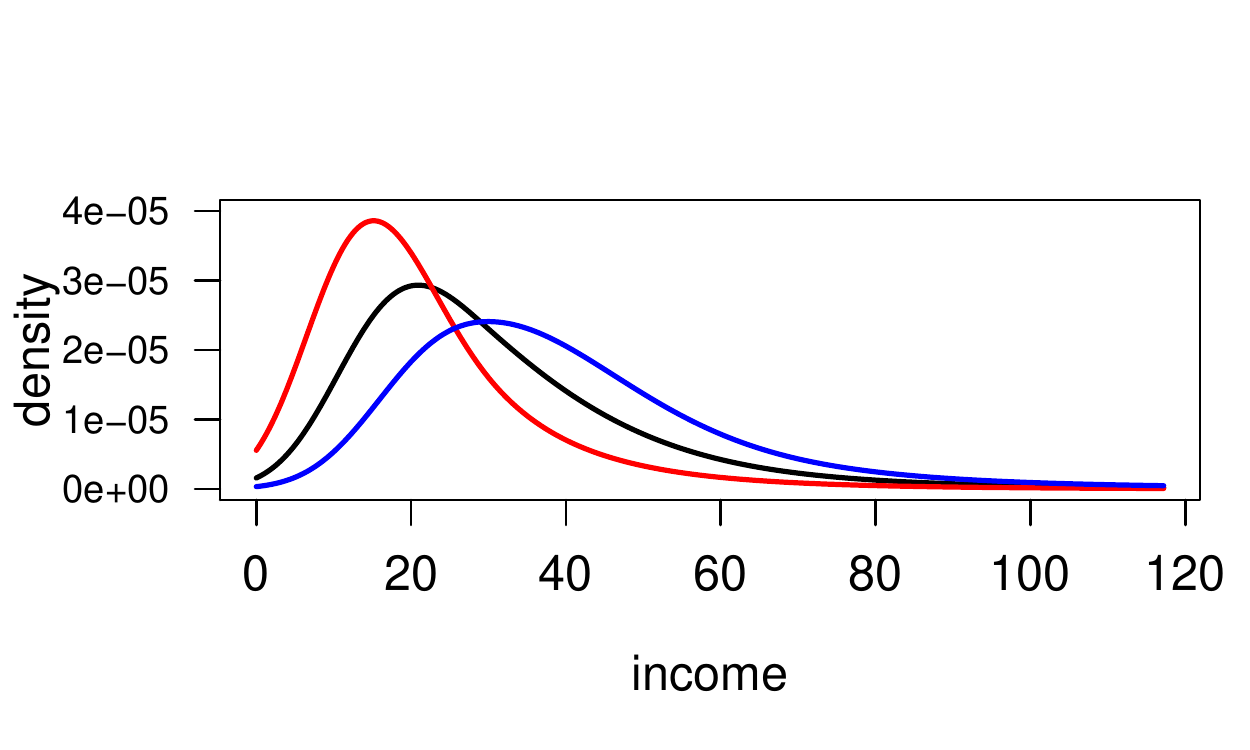}\includegraphics[width=0.33\textwidth]{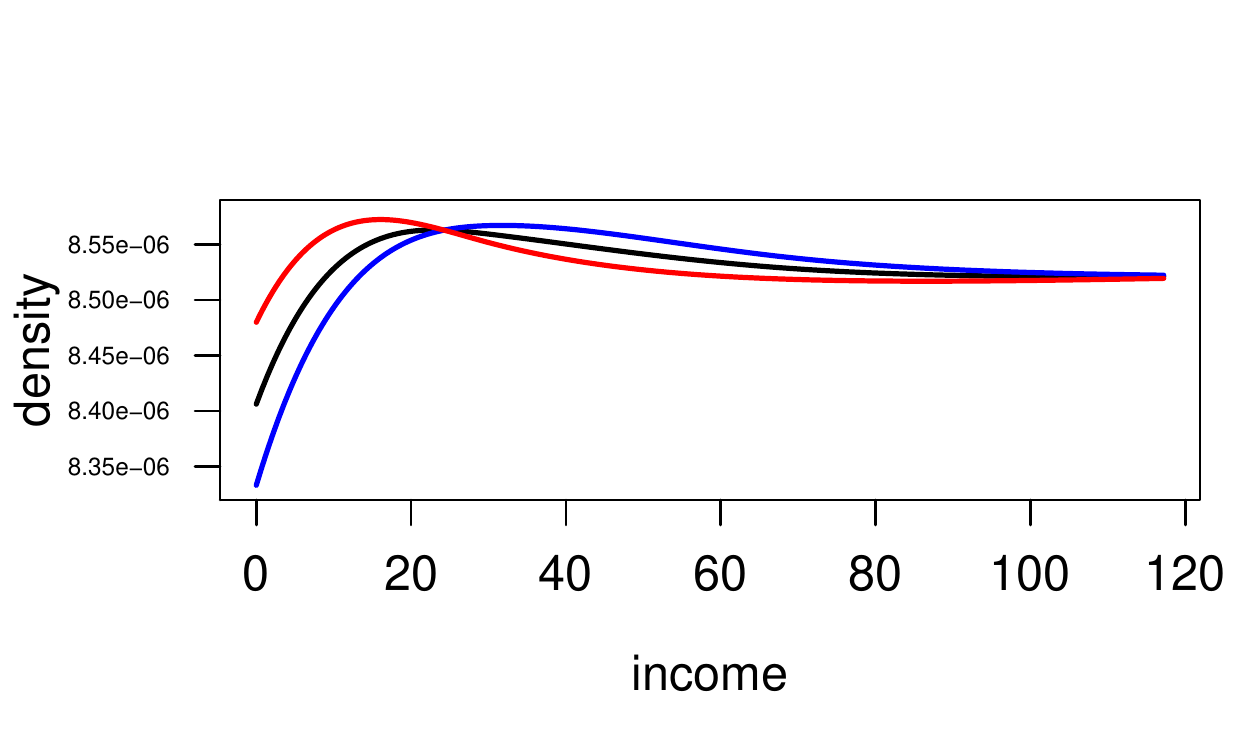}\includegraphics[width=0.33\textwidth]{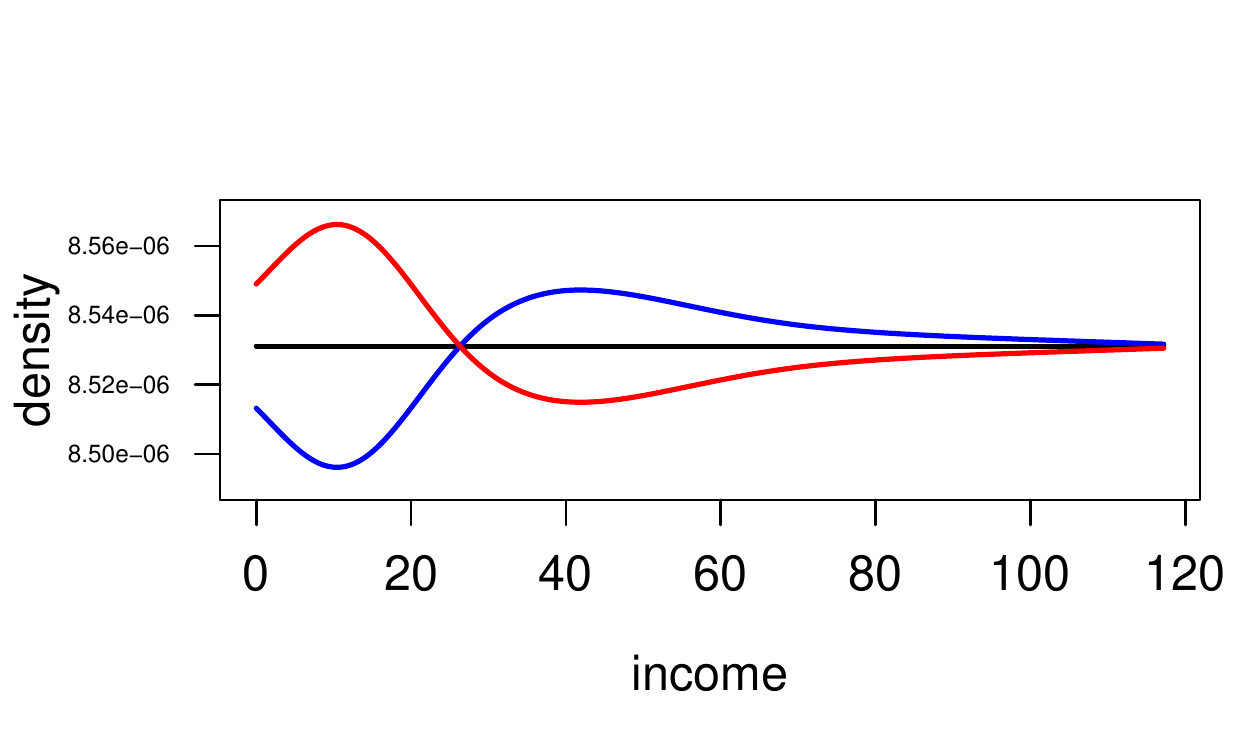}	
		\caption{$\mathcal{B}^2$-unweighted version of $\bar{f}_{\mathsf{P}} \oplus_{\mathsf{P}} / \ominus_{\mathsf{P}} 2\sqrt{\rho_1} \odot_{\mathsf{P}} \text{wSFPC}_{1}$ in {$\mathcal{B}^2(\lambda)$}, with $\mathsf{P}=\lambda, \mathsf{P}^\delta, \mathsf{P}^m$}	
	\end{subfigure}
	\begin{subfigure}[t]{1\textwidth}\includegraphics[width=0.33\textwidth]{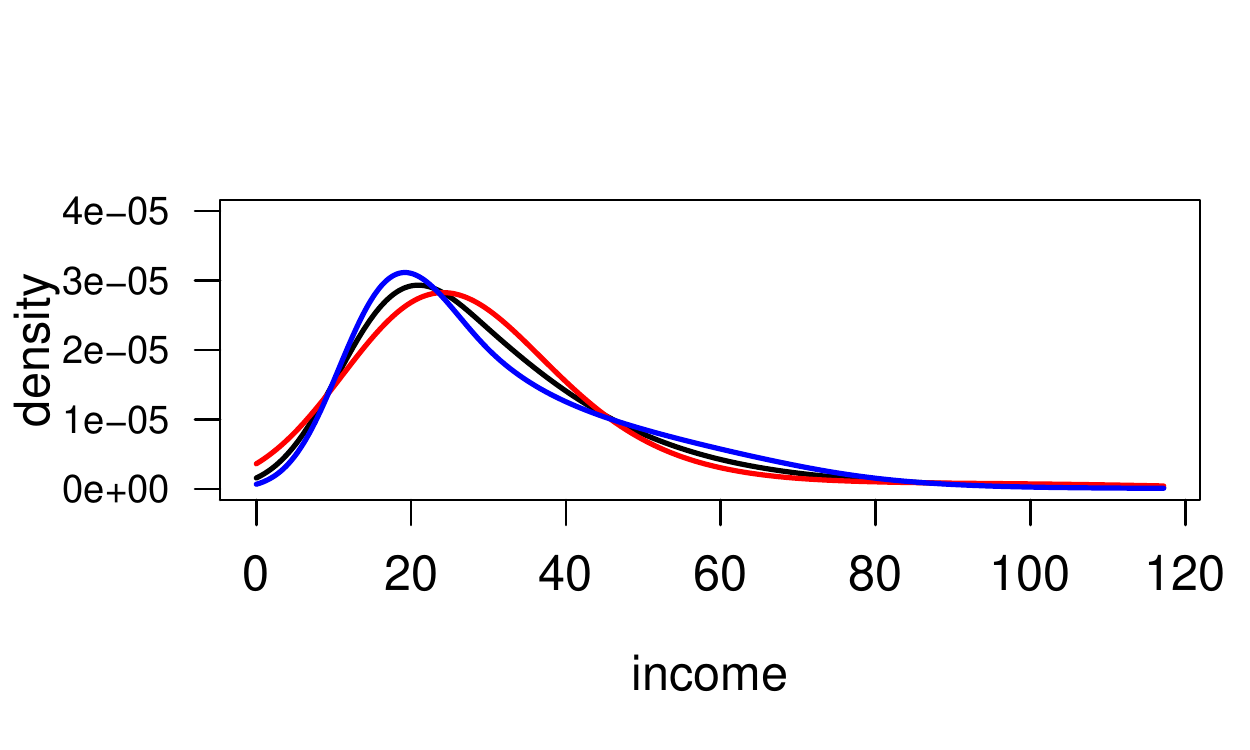}\includegraphics[width=0.33\textwidth]{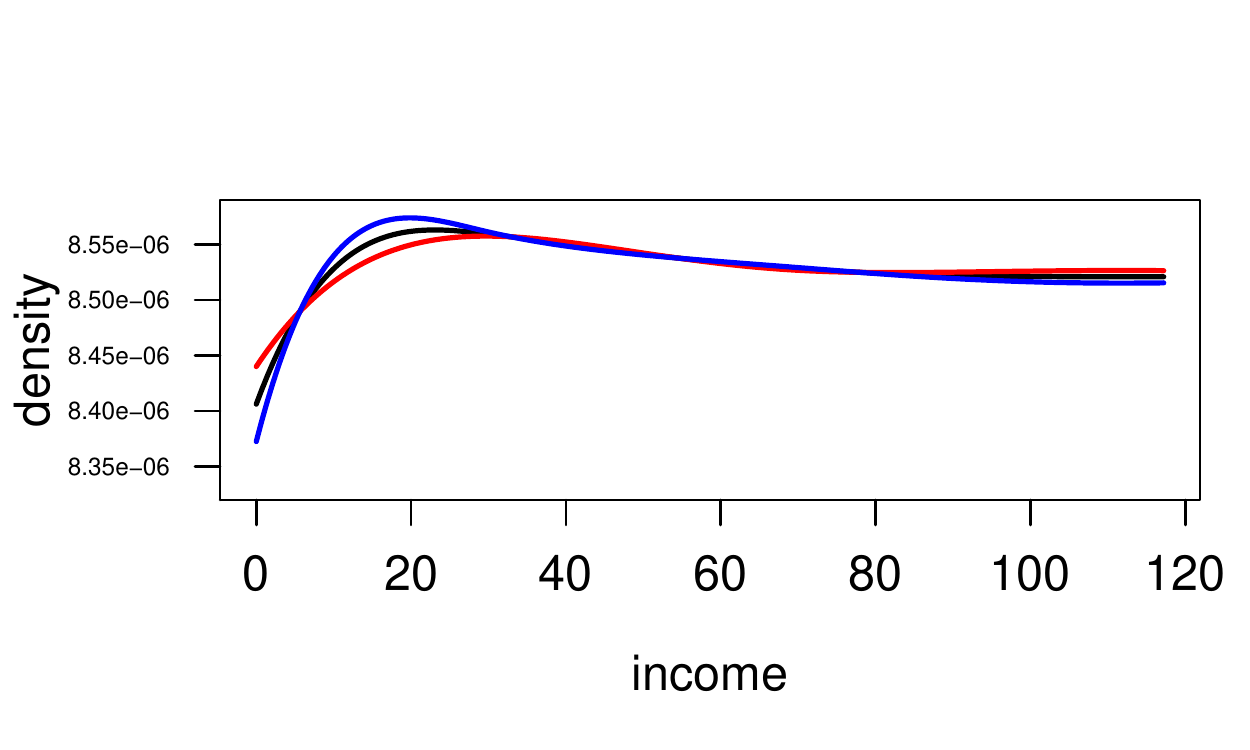}\includegraphics[width=0.33\textwidth]{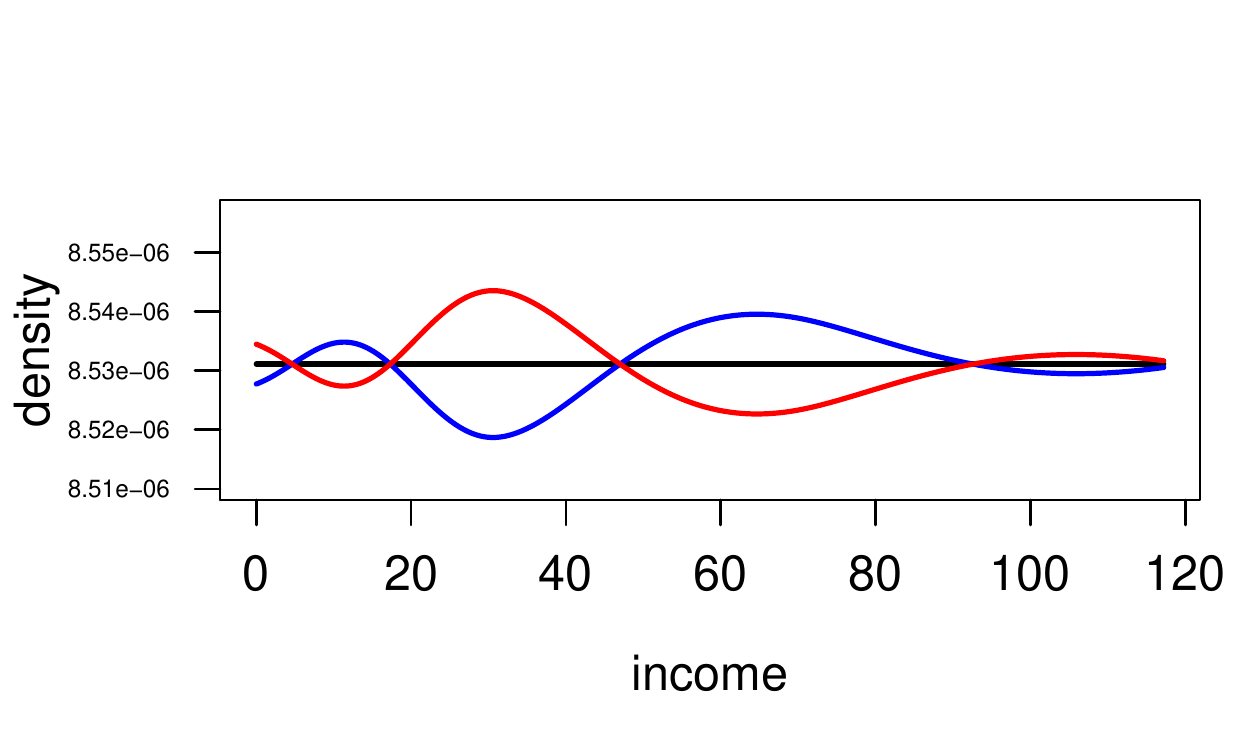}
		\caption{$\mathcal{B}^2$-unweighted version of $\bar{f}_{\mathsf{P}} \oplus_{\mathsf{P}} / \ominus_{\mathsf{P}} 2\sqrt{\rho_2} \odot_{\mathsf{P}} \text{wSFPC}_{2}$ in {$\mathcal{B}^2(\lambda)$}, with $\mathsf{P}=\lambda, \mathsf{P}^\delta, \mathsf{P}^m$.}				
	\end{subfigure}
	\begin{subfigure}[t]{1\textwidth}
        \includegraphics[width=0.33\textwidth]{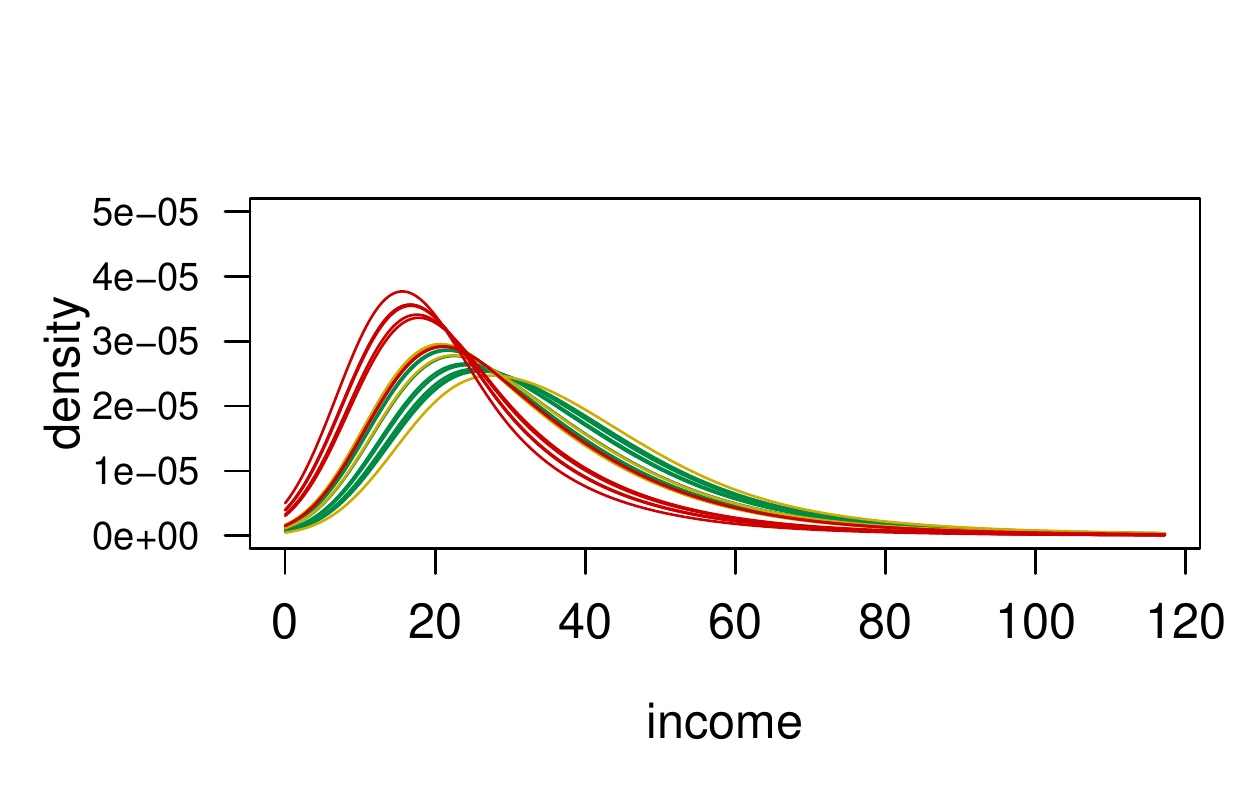}\includegraphics[width=0.33\textwidth]{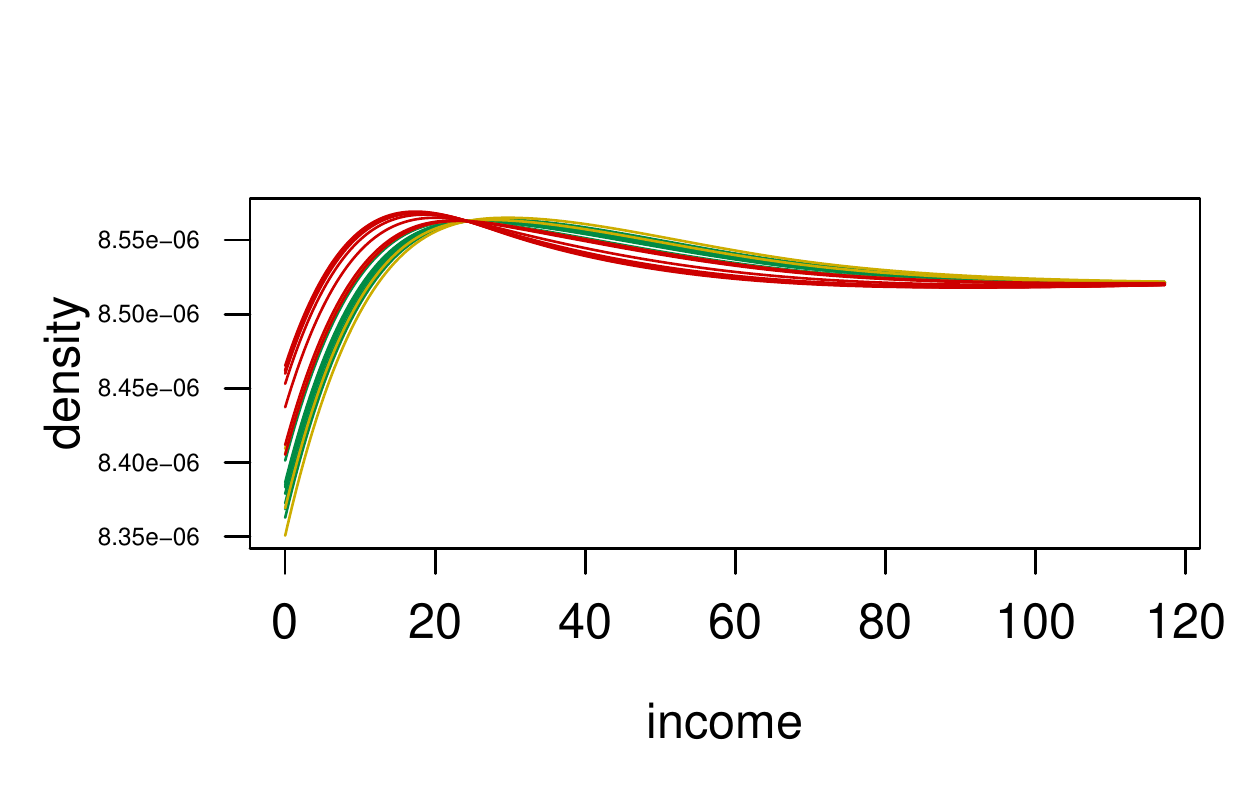}\includegraphics[width=0.33\textwidth]{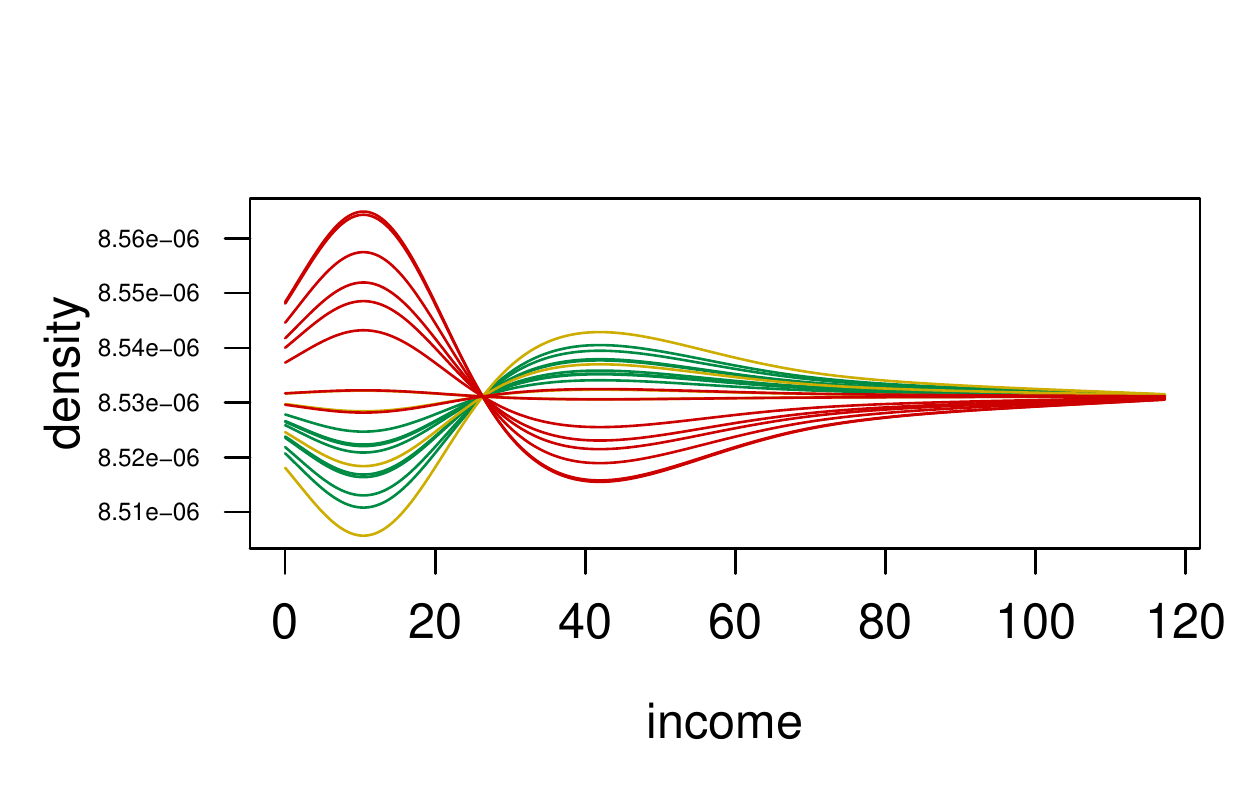}
		\caption{$\mathcal{B}^2$-unweighted version of the approximation of the $\mathsf{P}$-densities via SFPC$_1$.}
    \end{subfigure}
\caption{SFPCA results for income densities in Italian regions in case of reference measure set to (1) $\mathsf{P}=\lambda$ the Lebesgue measure (first column), (2) $\mathsf{P}=\mathsf{P}^\delta$ the exponential measure with $\delta = 3\times 10^{-5}$ (second column) and (3) $\mathsf{P}=\mathsf{P}^m$ the mean measure (third column). By $\mathcal{B}^2$-unweighted version of $f\in \mathcal{B}^2({\mathsf P})$ it is meant $\omega^{-1}(f_{\mathsf P}) \in \mathcal{B}^2(\lambda)$.
}
\label{fig:sfpca-income-leb-exp2-mean}
\end{figure}

\section{Conclusions}\label{sec:conclusion}

A novel weighting approach for probability density functions was proposed, based on changing the reference measure of Bayes Hilbert spaces. This new weighting scheme was provided within an original mathematical framework, where weighted Bayes spaces were linked to unweighted $\mathcal{B}^2$ and $L^2$ spaces. The key advantage of representing weighted densities in an unweighted space is the possibility of (i) making comparisons between densities derived from different weighting criteria, and (ii) visually interpret the results through ordinary `unweighted eyes'.
In fact, the proposed framework allows to perform varied kinds of analyses in weighted Bayes spaces, by using equivalent unweighted methods, which are typically developed for functional data analysis. Even though this strategy has been here demonstrated by extending a dimensionality reduction method (SFPCA) to the weighted case, other methods could be considered as well, such as regression \cite{talska18} or spatial prediction techniques \cite{menafoglio14}.
We finally stress that considering different weighting schemes can be particularly relevant in statistical applications, as they allow to account for different degrees of uncertainty across the domain of the data and for prior knowledge about the phenomenon to be encoded through a reference measure.

\section*{Acknowledgements}

The authors gratefully acknowledge both the support by the Czech Science Foundation (GACR), GA 19-01768S and the Internal Grant Agency of Palack\'y University, IGA PrF\_2018\_024.
The authors are also grateful to
Peter Filzmoser and Piercesare Secchi for comments that greatly improved
the manuscript.



\end{document}